\newcommand{\BC}{{\mathbb {C}}}
\newcommand{\BG}{{\mathbb {G}}}
\newcommand{\BR}{{\mathbb {R}}}
\newcommand{\BZ}{{\mathbb {Z}}}
\newcommand{\CC}{{\mathcal {C}}}
\newcommand{\CF}{{\mathcal {F}}}
\newcommand{\CH}{{\mathcal {H}}}
\newcommand{\CO}{{\mathcal {O}}}
\newcommand{\CP}{{\mathcal {P}}}
\newcommand{\CS}{{\mathcal {S}}}
\newcommand{\Fa}{{\mathfrak {a}}}
\newcommand{\Fg}{{\mathfrak {g}}}
\newcommand{\Fk}{{\mathfrak {k}}}
\newcommand{\Fp}{{\mathfrak {p}}}
\newcommand{\RM}{{\mathrm {M}}}
\newcommand{\GL}{{\mathrm{GL}}}
\renewcommand{\Im}{{\mathrm{Im}}}
\newcommand{\Id}{{\mathrm{Id}}}
\newcommand{\PGL}{{\mathrm{PGL}}}
\renewcommand{\Re}{{\mathrm{Re}}}
\newcommand{\Sym}{{\mathrm{Sym}}}
\newcommand{\tr}{{\mathrm{tr}}}
\newcommand{\wt}{\widetilde}
\newcommand{\wh}{\widehat}
\newcommand{\bs}{\backslash}
\def\del{{\delta}}
\def\eps{{\epsilon}}
\def\veps{{\varepsilon}}
\def\sig{{\sigma}}
\def\zet{{\zeta}}
\def\std{\rm std}
\def\lam{{\lambda}}
\def\gam{{\gamma}}
\def\Gam{{\Gamma}}
\def\wb{\overline} 
\def\vpi{\varpi}
\def\LG{{}^{L}G}
\def\LT{{}^{L}T}
\def\vphi{\varphi}
\def\p{\prime}
\newtheorem{thm}{Theorem}[subsection]
\newtheorem{defin}[thm]{Definition}
\newtheorem{rmk}[thm]{Remark}
\newtheorem{pro}[thm]{Proposition}
\newtheorem{lem}[thm]{Lemma}
\newtheorem{conjec}[thm]{Conjecture}
\newcommand{\Rmnum}[1]{\expandafter\@slowromancap\romannumeral #1@}
\begin{document}

\title[BK-Spherical]{On the Braverman-Kazhdan Proposal for Local Factors: Spherical Case}
\author{Zhilin Luo}
\address{School of Mathematics\\
University of Minnesota\\
Minneapolis, MN 55455, USA}
\email{luoxx537@umn.edu}

\begin{abstract}
In this paper, we study the Braverman-Kazhdan proposal for the local spherical situation. In the $p$-adic case, we give a definition of the spherical component of conjectural space $\CS_{\rho}(G,K)$ and the $\rho$-Fourier transform kernel $\Phi_{\rho}^{K}$, and verify several conjectures in \cite{BK00} in this situation. In the archimedean case, we study the asymptotic of the basic function $1_{\rho,s}$ and the $\rho$-Fourier transform kernel $\Phi_{\rho,s}^{K}$.
\end{abstract}

\maketitle

\tableofcontents

\section{Introduction}
The theory of zeta integrals can be traced back to the work of B. Riemann, who first wrote the Riemann zeta function $\zet(s) = \sum_{n=1}^{\infty}\frac{1}{n^{s}}$ as the Mellin transform of a theta function. The idea was developed by J. Tate in his thesis \cite{tatethesis} using the theory of zeta integrals. For convenience, in the introduction we restrict to the non-archimedean local fields case. For each character $\chi$ of $F^{\times}$, where $F$ is a non-archimedean local field, one considers a family of distributions given by zeta integrals $Z(s,f,\vphi)$ with parameter $s\in \BC$ on the space $\CS(F^{\times}) = C^{\infty}_{c}(F)$. Tate shows that the distribution admits meromorphic continuation to $s\in \BC$, possibly with a pole at $s=0$. The pole can be described by the $L$-factor $L(s,\chi)$, in the sense that the distribution
$\frac{Z(s,\cdot,\chi)}{L(s,\chi)}$
admits holomorphic continuation to the whole complex plane.

R. Godement and H. Jacquet \cite{gjzeta} generalize the work of Tate and study, for any irreducible admissible representation $\pi$ of $\GL(n)$ over a non-archimedean local field, the family of distributions given by zeta integrals $Z(s,f,\vphi_{\pi})$ with parameter $s\in \BC$ on the space $\CS(\GL(n)) = C^{\infty}_{c}(\RM_{n})$, where $\vphi_{\pi}\in \CC(\pi)$ is a matrix coefficient of $\pi$. They show that $Z(s,f,\vphi_{\pi})$ has meromorphic continuation to $s\in \BC$ with a possible pole at $s=0$, and their poles are captured by the standard local $L$-factor $L(s,\pi)$ attached to $\pi$.

According to R. Langlands (\cite{langlandsproblems}), for any reductive algebraic group $G$ defined over $F$, and for any finite dimensional representation $\rho$ of the Langlands dual group $\LG$, one may define the local $L$-factor $L(s,\pi,\rho)$ associated to an irreducible admissible representation $\pi$ of $G(F)$. It is natural to ask: Is it possible to find a family of distributions similar to the case of Godement-Jacquet that define the general local $L$-factor $L(s,\pi,\rho)$? Over the last fifty years, one found various types of global zeta integrals of Rankin-Selberg type, whose local zeta integrals may define local $L$-factors for a special list of $G$ and $\rho$. Often, the zeta integrals of Rankin-Selberg type are not the same as that of Godement-Jacquet. In 2000, A. Braverman and D. Kazhdan in \cite{BK00} propose a conjectural construction of families of distributions that may define the general $L$-factors $L(s,\pi,\rho)$, similar to that in \cite{gjzeta}. We will explain their proposal below.

\subsection{Notation and Convention}

Throughout the paper, we fix a local field $F$ of characteristic $0$, which can be either a $p$-adic field or an archimedean field. When $F$ is a $p$-adic field, we let $\CO_{F}$ be the ring of integers of $F$ with fixed uniformizer $\vpi$, and we assume that the residue field of $F$ has cardinality $q$.

We fix a valuation $|\cdot|$ on $F$. When $F$ is a $p$-adic field, we normalize $|\cdot|$ so that $|\vpi| = q^{-1}$. When $F\cong \BR$, it is the usual valuation on $\BR$. When $F\cong \BC$, $|z| = z\wb{z}$ for any $z\in \BC$, where $\wb{z}$ is the complex conjugate of $z$.

Let $G$ be a split connected reductive algebraic group over $F$. Following the notation of \cite[Section~3.1]{wwlibasic}, we assume that the group $G$ fits into the following short exact sequence
\begin{align}\label{shortexact}
\xymatrix{
1 \ar[r] & G_{0} \ar[r] & G \ar[r]^{\sig} & \BG_{m} \ar[r] &1
}
\end{align}
Here $G_{0}$ is a split connected semisimple algebraic group over $F$, and $\sig$ is a character of $G$ playing the role of determinant as in $\GL(n)$ case.

Let $\LG$ be the Langlands dual group of $G$. We fix an irreducible algebraic representation
\begin{align*}
\rho: \LG\to \GL(V_{\rho})
\end{align*}
of dimension $n=\dim V_{\rho}$. There are similar results for reducible $\rho$, but for convenience we only work with the case when $\rho$ is irreducible.
Following \cite[Definition~3.13]{BK00} and \cite[Section~3.1]{wwlibasic}, we further assume that $\rho$ is faithful, the restriction of $\rho$ to the central torus $\BG_{m}\to \LG$ is $z\to z\Id$, and $\ker (\rho)$ is connected.

We require that the representation $\rho$ fits into the following commutative diagram
\begin{align*}
\xymatrix{
1 \ar[r]  & \BG_{m} \ar[d]^{\Id} \ar[r]^{\wh{\sig}}&  \LG  \ar[d]^{\rho} \ar[r]
& \LG_{0} \ar[d]^{\wb{\rho}} \ar[r] & 1\\
1 \ar[r] & \BG_{m} \ar[r] & \GL(V,\BC) \ar[r] & \PGL(V,\BC) \ar[r] & 1
}
\end{align*}
The top row is obtained by dualizing the short exact sequence (\ref{shortexact}),
and $\wb{\rho}$ is the projective representation obtained from $\rho$. By \cite[Section~3.1]{wwlibasic}, we may assume that the rows are exact and the second square is cartesian.

We fix a Borel pair $(B,T)$ for our group $G$. Let $X_{*}(T)$ and $X^{*}(T)$ be the cocharacter and character group of $T$ respectively. Let $W=W(G,T)$ be the Weyl group. Let $\rho_{B}$ be the half sum of positive roots. The corresponding modular character is denoted by $\del_{B}$. Following the suggestion of \cite{bouthier2016formal} and \cite{formalarcerrtum}, we let $l = 2<\rho_{B},\lam>$, where $\lam$ is the highest weight of the representation $\rho$.

When $F$ is a $p$-adic field, we choose a hyperspecial vertex in the Bruhat-Tits building of $G$ which lies in the apartment determined by $T$. The corresponding hyperspecial subgroup $G(F)$ is denoted by $K$
 as usual. When $F$ is an archimedean field, by Cartan-Iwasawa-Malcev theorem \cite[Theorem~1.2]{borelssgrp}, we fix a maximal compact subgroup $K$ of $G$.

When $F$ is a $p$-adic field, we fix the Cartan decomposition $G(F)=\coprod_{\lam\in X_{*}(T)_{+}} K\lam(\vpi)K$, where $X_{*}(T)_{+}$ is the positive Weyl chamber. When $F$ is an archimedean field, we also fix the Cartan decomposition $G=K\exp(\Fa)K$, where $\Fa$ is a maximal abelian subaglebra of the Lie algebra $\Fg$ of $G$.
Let $T(F)\cap K = T_{K}$.

We fix a nontrivial additive character $\psi$ of $F$ with conductor $\CO_{F}$. We also fix a Haar measure on $F$ such that the Haar measure is self-dual w.r.t. the additive character $\psi$.

\subsection{Braverman-Kazhdan Proposal}
In \cite{BK00}, the local aspect of the Braverman-Kazhdan proposal is to construct a family of zeta distributions associated to each finite dimensional representation $\rho$ of the Langlands dual group $\LG$ that define the general $L$-factor $L(s,\pi,\rho)$ for every irreducible admissible representation $\pi$ of $G(F)$ via a generalization of the work of Godement and Jacquet \cite{gjzeta}. Roughly speaking, they proposed the existence of a function space $\CS_{\rho}(G)\subset C^{\infty}(G)$, which should be the space of test functions for the zeta distributions, such that the following conjecture holds
\begin{conjec}\cite[Conjecture~5.11]{BK00}\label{zetaintegral}
With the notation above, the following hold.
\begin{enumerate}
\item For every $f\in \CS_{\rho}(G)$ and every $\vphi\in \CC(\pi)$ the integral
$$
Z(s,f,\vphi) = \int_{G}f(g)\vphi(g)|\sig(g)|^{s+\frac{l}{2}}dg
$$
is absolutely convergent for $\Re(s)\gg 0$.

\item $Z(s,f,\vphi)$ has a meromorphic continuation to $\BC$ and defines a rational function of $q^{s}$.

\item $I_{\pi} = \{ Z(s,f,\vphi) | \quad f\in \CS_{\rho}(G), \vphi\in \CC(\pi) \}$ is a finitely generated non-zero fractional ideal of the ring $\BC[q^{s},q^{-s}]$, where $\CC(\pi)$ is the space of matrix coefficients of $\pi$.
\end{enumerate}
\end{conjec}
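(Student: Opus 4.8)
The strategy is to split $\CS_{\rho}(G)$ into the elementary subspace $C_{c}^{\infty}(G)$ and the part built from the $\rho$-basic function, to reduce the $s$-dependence to the single twist $|\sig|^{s}$, and to extract $L(s,\pi,\rho)$ from an unramified computation. \emph{Step 1 (reduction).} If $f\in C_{c}^{\infty}(G)$ the integrand of $Z(s,f,\vphi)$ has compact support, so $Z(s,f,\vphi)\in\BC[q^{s},q^{-s}]$ and assertions (1)--(2) are immediate for such $f$; moreover, choosing $f$ supported on the open set $\{|\sig|=1\}$ with $\int_{G}f\vphi\,dg\ne0$ produces a nonzero constant in $I_{\pi}$, whence $\BC[q^{s},q^{-s}]\subseteq I_{\pi}$. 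The remaining generators of $\CS_{\rho}(G)$ are $G(F)\times G(F)$-translates of the basic function $b_{\rho,s}$, and since a substitution gives $Z\big(s,L_{g_{1}}R_{g_{2}}b_{\rho,s},\vphi\big)=|\sig(g_{1}g_{2})|^{s+\frac{l}{2}}\,Z(s,b_{\rho,s},\vphi_{1})$ for the matrix coefficient $\vphi_{1}(h)=\vphi(g_{1}hg_{2})$, it suffices to treat $f=b_{\rho,s}$ and arbitrary $\vphi\in\CC(\pi)$; because $b_{\rho,s}$ is $K$-bi-invariant, only the bi-$K$-invariant projection $\vphi^{K,K}$ of $\vphi$ contributes.

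\emph{Step 2 (convergence, (1)).} Using the Cartan decomposition,
\[
Z(s,b_{\rho,s},\vphi)=\sum_{\lam\in X_{*}(T)_{+}}b_{\rho,s}(\lam(\vpi))\,\vphi^{K,K}(\lam(\vpi))\,\vol\big(K\lam(\vpi)K\big)\,q^{-\sig_{*}(\lam)\,(s+\frac{l}{2})},
\]
where $\sig_{*}\colon X_{*}(T)\to\BZ$ is induced by $\sig$. I would bound the summands using (i) the defining growth of the basic function \cite{wwlibasic}: $b_{\rho,s}$ is supported on $\{|\sig|\le1\}$ and on each Cartan cell is controlled by the coefficients of the Satake/Kato--Lusztig expansion of the series $\sum_{k\ge0}q^{-ks}\,\mathrm{ch}\big[\text{the }\Sym^{k}\text{-type part of }\rho\big]$, so that $|b_{\rho,s}(\lam(\vpi))|\,\vol(K\lam(\vpi)K)\ll\del_{B}(\lam(\vpi))^{-1/2}$ times a subexponential factor once $\Re(s)$ is large; (ii) the standard growth bound for matrix coefficients of admissible $\pi$, namely $|\vphi(g)|\ll\Xi(g)\|g\|^{N}$ in the tempered case and the corresponding Langlands-quotient bound in general, with $\Xi(\lam(\vpi))\ll\del_{B}(\lam(\vpi))^{1/2}(1+\|\lam\|)^{M}$. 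The two $\del_{B}^{\pm1/2}$ factors cancel and the residual series converges absolutely for $\Re(s)\gg0$; the normalization $l=2\langle\rho_{B},\lam\rangle$ with $\lam$ the highest weight of $\rho$ is exactly what aligns this half-plane with that of $L(s,\pi,\rho)$.

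\emph{Step 3 (rationality, (2), and non-vanishing).} Grouping the Cartan sum by $n=\sig_{*}(\lam)$ gives $Z(s,b_{\rho,s},\vphi)=\sum_{n\ge0}c_{n}\,q^{-n(s+\frac{l}{2})}$ with $(c_{n})$ of at most subexponential growth; the generating series $\sum_{n}c_{n}T^{n}$ is rational in $T$ because, via the Satake isomorphism, $b_{\rho,s}$ transforms to a constant multiple of $\det\big(1-q^{-s}\rho(\,\cdot\,)\big)^{-1}$ on the unramified dual of $T$, and pairing against $\vphi^{K,K}$ amounts to evaluation at the Satake parameter $c_{\pi}$, an output with only finitely many poles. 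Thus $Z(s,b_{\rho,s},\vphi)\in\BC(q^{s})$, proving (2). In particular, for $\pi$ unramified and $\vphi=\ome_{\pi}$ the normalized zonal spherical function, $Z(s,b_{\rho,s},\ome_{\pi})=L(s,\pi,\rho)\cdot u(q^{s})$ with $u\in\BC[q^{s},q^{-s}]^{\times}$, so $I_{\pi}\ne0$ and $L(s,\pi,\rho)\in I_{\pi}$ up to a unit. When $\pi^{K}=0$ the displayed integral vanishes, and the non-vanishing of $I_{\pi}$ beyond $\BC[q^{s},q^{-s}]$ must come from elements of $\CS_{\rho}(G)$ \emph{not} in the $G\times G$-orbit of $b_{\rho,s}$ (the ``intermediate'' contributions attached to proper parabolics); identifying enough of these to reproduce the predicted pole of $L(s,\pi,\rho)$ is a genuine point, and it is precisely here that the bi-$K$-invariant restriction of the present paper offers the decisive simplification.

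\emph{Step 4 (the fractional ideal, (3)).} The set $I_{\pi}$ is a $\BC[q^{s},q^{-s}]$-submodule of $\BC(q^{s})$, and finite generation is equivalent to a uniform denominator bound, i.e.\ to $L(s,\pi,\rho)^{-1}Z(s,f,\vphi)\in\BC[q^{s},q^{-s}]$ for all $f\in\CS_{\rho}(G)$, $\vphi\in\CC(\pi)$. This I expect to be the \emph{main obstacle}. The route, following \cite{gjzeta}, is to construct the $\rho$-Fourier transform kernel $\Phi_{\rho}^{K}$, show convolution against it preserves $\CS_{\rho}(G)$ and is (up to sign) involutive, and deduce a local functional equation $Z(s,f,\vphi)=\gamma(s,\pi,\rho,\psi)^{-1}Z(1-s,\wh{f},\check{\vphi})$; convergence of the right-hand side for $\Re(s)$ small then confines the poles of $Z(s,f,\vphi)$ to those of $L(s,\pi,\rho)$, yielding the denominator bound and hence finite generation, with $I_{\pi}=L(s,\pi,\rho)\,\BC[q^{s},q^{-s}]$ in the unramified case. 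Pinning down $\CS_{\rho}(G)$ and $\Phi_{\rho}^{K}$ precisely enough for this functional equation — in particular verifying that $\Phi_{\rho}^{K}$ is genuinely the kernel of an involution compatible with the two-sided $G(F)$-action — is the crux, and is exactly the step that passing to the spherical setting (and hence to the Satake isomorphism together with Macdonald's formula) renders tractable.
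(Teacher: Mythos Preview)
The statement you are trying to prove is a \emph{conjecture} in the paper (and in \cite{BK00}), not a theorem; the paper does not supply a proof of it. The space $\CS_{\rho}(G)$ that appears in the statement is itself conjectural --- its structure is not known in general, and the paper explicitly says so in the surrounding discussion. What the paper actually does is \emph{define} the spherical piece $\CS_{\rho}(G,K):=1_{\rho,-\frac{l}{2}}*\CH(G,K)$ and then prove the analogue of the conjecture restricted to that space and to spherical $\pi$ (Theorem~\ref{gcd}). That proof is short and purely Satake-theoretic: every $f\in\CS_{\rho}(G,K)$ is bi-$K$-invariant, so one may replace $\vphi$ by the zonal spherical function $\Gamma_{\chi}$, and then $Z(s,f,\Gamma_{\chi})=\CS(f|\sig|^{s+\frac{l}{2}})(c)$; by construction $\CS(f)\in L(-\tfrac{l}{2},\pi_{c},\rho)\,\BC[\wh{T}/W]$, which immediately gives $I_{\pi}=L(s,\pi,\rho)\,\BC[q^{s},q^{-s}]$.

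Your proposal, by contrast, attempts the general conjecture and therefore runs into a genuine gap at the very start: you assume that $\CS_{\rho}(G)$ is generated by $C_{c}^{\infty}(G)$ together with $G\times G$-translates of the basic function. No such description is available --- indeed, finding the right $\CS_{\rho}(G)$ is one of the main open problems of the Braverman--Kazhdan program. You flag this yourself in Steps~3 and~4 (the ``intermediate contributions attached to proper parabolics'' and the need to construct $\Phi_{\rho}$ with the right invariance), but these are not technical details to be filled in; they are precisely the content of the conjecture. In particular, your denominator bound in Step~4 via a functional equation presupposes the existence and involutivity of $\CF_{\rho}$ on $\CS_{\rho}(G)$, which is Conjecture~\ref{operator} --- another open statement. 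So the proposal is not a proof but an outline of what a proof would have to contain, and the paper agrees: it retreats to the spherical case exactly because that is where the Satake isomorphism makes the definitions and the argument go through.
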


\begin{rmk}
In \cite{BK00}, Braverman and Kazhdan defined the number $l$ to be the semisimple rank of $G$. Following the work of \cite{bouthier2016formal} and \cite{formalarcerrtum}, it is suggested that the correct normalization should be $l=2<\rho_{B},\lam>$, where $\lam$ is the highest weight of $\rho$. In the case where $\rho$ is the standard representation of $\GL(n)$, the number $l=n-1$. The definition coincides with the work of Godement and Jacquet \cite{gjzeta}.
\end{rmk}

Assuming that the Conjecture \ref{zetaintegral} holds, one may define the local $L$-factor $L(s,\pi,\rho)$ to be the unique generator of the fractional ideal $I_{\pi}$ of the form $P(q^{-s})^{-1}$, where $P$ is a polynomial such that $P(0)=1$.
Moreover, they also proposed the existence of a Fourier-type transform $\CF_{\rho}$ \cite[Section~5.3]{BK00} that is defined by
\begin{align*}
\CF_{\rho}(f) = |\sig|^{-l-1}(\Phi_{\psi,\rho}*f^{\vee}), \quad f\in C^{\infty}_{c}(G),
\end{align*}
and satisfies the following
\begin{conjec}\cite[Conjecture~5.9]{BK00}\label{operator}
The $\rho$-Fourier transform
$\CF_{\rho}$ extends to a unitary operator on $L^{2}(G,|\sig|^{l+1}dg)$ and the space $\CS_{\rho}(G)$ is $\CF_{\rho}$-invariant. Here the character $\sig$ is defined in (\ref{shortexact}).
\end{conjec}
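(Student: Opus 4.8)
The plan is to prove the statement in full by reducing both assertions to the spectral behaviour of $\CF_{\rho}$ and, ultimately, to the absolute value of a local $\gamma$-factor on the tempered dual; the spherical picture studied in the body of the paper is the unconditionally accessible shadow of this argument.

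First I would record the equivariance of $\CF_{\rho}$. Because $\Phi_{\psi,\rho}$ is an $\Ad G$-invariant distribution, convolution with it commutes with conjugation, while the twists by $|\sig|^{\pm(l+1)}$ and the inversion $f\mapsto f^{\vee}$ interchange the two regular representations up to the character $|\sig|^{l+1}$; concretely, $\CF_{\rho}({}^{x}f)={}^{x}(\CF_{\rho}f)$ and $\CF_{\rho}(L_{y}f)=|\sig(y)|^{l+1}R_{y}(\CF_{\rho}f)$. Conjugating by the isometry $f\mapsto|\sig|^{(l+1)/2}f$ from $L^{2}(G,|\sig|^{l+1}dg)$ to $L^{2}(G,dg)$ turns $\CF_{\rho}$ into an operator $\wt{\CF}_{\rho}$ on $L^{2}(G,dg)$ that commutes with the regular $G\times G$-action (after accounting for the flip $\pi\mapsto\wt\pi$ induced by $f\mapsto f^{\vee}$), hence by Schur's lemma acts on the $\pi$-isotypic piece of the Plancherel decomposition by a scalar $c(\pi)$. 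The remaining input is the identification $c(\pi)=\gamma(\tfrac{1}{2},\pi,\rho,\psi)$, the value at the centre of the local functional equation $Z(1-s,\CF_{\rho}f,\varphi^{\vee})=\gamma(s,\pi,\rho,\psi)\,Z(s,f,\varphi)$ that the defining formula for $\CF_{\rho}$ together with Conjecture~\ref{zetaintegral} produce; the $l/2$-shift in the zeta integral is exactly what puts the symmetric centre on the unitary axis, so that $L^{2}(G,|\sig|^{l+1}dg)$ is the correct Hilbert space.

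With this in hand the unitarity assertion becomes the statement that $|\gamma(\tfrac{1}{2},\pi,\rho,\psi)|=1$ for Plancherel-almost every $\pi$, i.e. for tempered $\pi$. This follows from the functional-equation identity $\gamma(s,\pi,\rho,\psi)\,\gamma(1-s,\wt\pi,\rho,\ovl\psi)=1$, the standard conjugation relation for $\gamma$-factors, and $\wt\pi\cong\ovl\pi$ for tempered $\pi$, which together force $|\gamma|^{2}=1$ at $s=\tfrac{1}{2}$; one then concludes by writing $L^{2}(G,dg)=\int^{\oplus}_{\widehat{G}_{\mathrm{temp}}}\pi\otimes\wt\pi\,d\mu_{\mathrm{Pl}}(\pi)$ and using the scalar action of $\wt{\CF}_{\rho}$ established above fibrewise. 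For the invariance of $\CS_{\rho}(G)$, the first task is to pin down the definition: $\CS_{\rho}(G)$ should be the smallest left-and-right translation invariant, $\CF_{\rho}$-stable subspace of $L^{2}(G,|\sig|^{l+1}dg)$ containing $C^{\infty}_{c}(G)$, equivalently $C^{\infty}_{c}(G)+\CF_{\rho}(C^{\infty}_{c}(G))$ once one knows $\CF_{\rho}$ is essentially involutive. Granting a workable definition, $\CF_{\rho}$-stability is then built in, and the substantive points are that $\CF_{\rho}$ maps $C^{\infty}_{c}(G)$ into $\CS_{\rho}(G)$ and that $\CF_{\psi,\rho}\circ\CF_{\ovl\psi,\rho}=\mathrm{id}$ with $\Phi_{\psi,\rho}^{\vee}=\Phi_{\ovl\psi,\rho}$, which reduce, via the equivariance and scalar computation above, to $\gamma(s,\pi,\rho,\psi)\,\gamma(s,\pi,\rho,\ovl\psi)=1$ on the unitary axis; the bi-$K$-invariant part of $\CS_{\rho}(G)$, spanned by the basic functions $1_{\rho,s}$ and their $\CF_{\rho}$-images, is precisely the object the paper constructs and studies directly.

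The main obstacle is the construction and fine analysis of the kernel $\Phi_{\psi,\rho}$ on all of $G(F)$. Two things are needed and are genuinely hard in general: that $\Phi_{\psi,\rho}*f^{\vee}$ converges and defines a smooth function of controlled growth for $f\in C^{\infty}_{c}(G)$, so that $\CF_{\rho}$ is even well defined, and that the resulting operator is $L^{2}$-bounded; both require precise support and asymptotic estimates for $\Phi_{\psi,\rho}$ along the boundary strata of an equivariant compactification $\ovl G$ of $G$, the deep geometric input that the Braverman--Kazhdan programme has not supplied outside Godement--Jacquet and a handful of further cases. Restricting to the spherical situation---working inside $L^{2}(K\backslash G/K,|\sig|^{l+1}dg)$ via the Satake isomorphism, where $\Phi_{\rho}^{K}$ and $1_{\rho,s}$ are explicit and the scalar above becomes the unramified $\gamma$-factor---removes this obstruction, and is the route taken in this paper.
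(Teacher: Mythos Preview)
This statement is a conjecture that the paper does not prove in full; only the bi-$K$-invariant version (Theorem~\ref{inv}) is established. You acknowledge this yourself in your final paragraph, correctly naming the missing analytic control of $\Phi_{\psi,\rho}$ on all of $G(F)$ as the obstruction. So your proposal is a road-map with a known gap rather than a proof, and you are right that the gap is genuine.

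For the spherical part that the paper does prove, your mechanism differs from the paper's. You would transport $\CF_{\rho}$ to $L^{2}(G,dg)$, use Plancherel and Schur to obtain a scalar on each tempered isotypic piece, identify the scalar as a central $\gamma$-value, and invoke $|\gamma(\tfrac{1}{2},\pi,\rho,\psi)|=1$ for tempered $\pi$. The paper never appeals to Plancherel: for $f,h\in\CH(G,K)$ it rewrites both inner products as convolutions evaluated at $e$, applies the Satake isomorphism, and verifies directly the rational-function identity $\CS(\Phi^{K}_{\psi,\rho})(c)\cdot\CS\big((\wb{\Phi^{K}_{\psi,\rho}})^{\vee}\big)(c\cdot q^{-(l+1)})=1$ on $\wh{T}/W$. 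Your route is more conceptual and is the natural template for the full conjecture; the paper's is a bare-hands computation that is unconditional in its restricted scope.

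Your handling of the $\CS_{\rho}(G)$-invariance is essentially circular: defining the space as an $\CF_{\rho}$-closure makes invariance tautological and shifts all content elsewhere. The paper instead defines $\CS_{\rho}(G,K)=1_{\rho,-\frac{l}{2}}*\CH(G,K)$ intrinsically and then \emph{proves} invariance by showing that the Satake image of $\CF_{\rho}(1_{\rho,-\frac{l}{2}}*f)$ lands back in $L(-\tfrac{l}{2},\pi_{c},\rho)\,\BC[\wh{T}/W]$.
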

Here $\Phi_{\psi,\rho}$ is a $G$-stable $\sig$-compact distribution in the sense of \cite[Definition~3.8]{BK00}. After unramified twist, the action of $\Phi_{\psi,\rho,s}$ on the space of $\pi \in $Irr$(G)$ is given by a rational function in $s$, which is the associated local gamma factor $\gam(-s-\frac{l}{2},\pi^{\vee},\rho,\psi)$.

\begin{rmk}
Here we want to make a remark on the $\gam$-factor. Assuming the local Langlands functoriality for $\rho$, we can set
$$
\gam(s,\pi,\rho,\psi) = \gam(s,\rho(\pi),\psi),
$$
where $\rho(\pi)$ is the functorial lifting of $\pi$ along $\rho$. The $\gam$-factor is a rational function in $s$.
Hence, for special values of $s$, for instance $s=-\frac{l}{2}$, there might exist $\pi\in Irr(G)$ such that the constant $\gam(-\frac{l}{2},\rho(\pi),\psi)$ does not exist for $\pi$. In this case, we can take an unramified twist of $\Phi_{\psi,\rho}$, which we denote as $\Phi_{\psi,\rho,s}$. Then the action of $\Phi_{\psi,\rho,s}$ on the space of $\pi$ is given by the local gamma factor $\gam(-s-\frac{l}{2},\pi^{\vee},\rho,\psi)$.
\end{rmk}

\begin{rmk}
In \cite[Section~1.2]{BK00}, Braverman and Kazhdan define the distribution $\Phi_{\psi,\rho,s}$ with the property that its action on the space of $\pi \in $$\rm{Irr}$$(G)$ is given by the local gamma factor $\gam(s,\pi,\rho,\psi)$ with parameter $s\in \BC$. For normalization purpose, we define our $G$-stable distribution $\Phi_{\psi,\rho,s}$ with action on $\pi$ via the scalar $\gam(-s-\frac{l}{2},\pi^{\vee},\rho,\psi)$. In Lemma \ref{distributiongamma} below, we show how to derive the relation between $\gam$-factor and $\Phi_{\psi,\rho,s}$ formally from the conjectural functional equation
\begin{align*}
Z(1-s,\CF_{\rho}(f),\vphi^{\vee}) = \gam(s,\pi,\rho,\psi) Z(s,f,\vphi), \quad f\in \CS_{\rho}(G), \vphi\in \CC(\pi)
\end{align*}
\end{rmk}

In \cite[Section~7]{BK00}, Braverman and Kazhdan give a conjectural algebro-geometric construction of the distribution $\Phi_{\psi,\rho,s}$. It is not difficult to define the distribution $\Phi_{\psi,\rho\circ i,s}$ on $T$ associated to the representation $\rho\circ i$ of $\LT$
\begin{align*}
\xymatrix{{\LT} \ar[r]^{i} & \LG\ar[r]^{\rho} &\GL(V_{\rho})}.
\end{align*}
Since the distribution $\Phi_{\psi,\rho,s}$ is conjectured to be $G$-stable, using the adjoint quotient map $G^{reg}\to T/W$, one can naturally extend it to a distribution on $G$ once the $W$-equivariance of the distribution $\Phi_{\psi,\rho\circ i,s}$ is established as conjectured in \cite[Conjecture~7.11]{BK00}. Then Braverman and Kazhdan conjectured that the construction gives us the distribution $\Phi_{\psi,\rho,s}$ that we want. There is a parallel conjecture in finite field case, and some recent works (\cite{braverman2003sheaves}, \cite{chen2016non}, and \cite{cheng2017conjecture}) confirm the construction.

For the construction of function space $\CS_{\rho}(G)$, Braverman and Kazhdan \cite[Section~5.5]{BK00} expect to use the Vinberg's monoids \cite{vinberg1995reductive}. For each $\rho$, one can construct a reductive monoid $\wb{G}_{\rho}$ containing $G$ as an open dense subvariety, whose unit is just the group $G$, and there is a $G\times G$ equivariant embedding of $G$ into $\wb{G}_{\rho}$ . Here $\wb{G}_{\rho}$ is expected to play the role of $\RM_{n}$ as in \cite{gjzeta}. But for almost all $\rho$, $\wb{G}_{\rho}$ is a singular variety. Hence one cannot simply use the locally constant compactly supported functions on $\wb{G}_{\rho}$ as our conjectural function space $\CS_{\rho}(G)$. Recently there are some works in the function field case (\cite{bouthier2016formal} and \cite{formalarcerrtum}) explaining the relation between the geometry of $\wb{G}_{\rho}$ and the \emph{basic function} in $\CS_{\rho}(G)$.

Assuming the local Langlands functoriality for $\rho$, L. Lafforgue \cite{lafforgue2014noyaux} proposes the definition of $\CS_{\rho}$ and $\CF_{\rho}$ using Plancherel formula. However, the analytical properties of $\CS_{\rho}$ and $\CF_{\rho}$ may not be easily figured out from such an abstract definition.

By the work of Godement and Jacquet \cite{gjzeta}, when $\rho$ is the standard representation of $\GL(n)$ the above conjectures hold. We can take $\CS_{\rho}(G)$ to be the restriction to $\GL(n)$ of functions in $C^{\infty}_{c}(\RM_{n})$, and $\GL(n)$ embeds into $\RM_{n}$ naturally. Here $\RM_{n}$ is the monoid of $n\times n$ matrices which fits into the construction of Vinberg \cite{vinberg1995reductive}. $\CF_{\rho}$ in this case is the classical Fourier transform on $\RM_{n}$ fixing $C^{\infty}_{c}(\RM_{n})$ defined by
\begin{align*}
\CF(f)(g) &= |\det g|^{-n}(\Phi_{\psi, \std}*f^{\vee})(g)
\\&= \int_{\RM_{n}(F)} f(y)\psi(\tr(yg))dy,     \quad f\in C^{\infty}_{c}(\RM_{n})
\end{align*}
where $\Phi_{\psi,\std}(g) = \psi(\tr(g))|\det(g)|^{n}$.
\newline

\paragraph{ \textbf{Basic Function }}
Although the structure of the space $\CS_{\rho}(G)$ is still unclear, there is a distinguished element in the space $\CS_{\rho}(G)$, called basic function, which we will introduce below.

In \cite{gjzeta}, the authors find that the characteristic function $1_{\RM_{n}(\CO_{F})}$ of $\RM_{n}(\CO_{F})$ satisfies the following two properties:
\begin{enumerate}
\item For any spherical representation $\pi$ of $G=\GL(n)$ with Satake parameter $c\in \wh{T}/W$, let $\vphi_{\pi}$ be the associated zonal spherical function, then
\begin{align*}
Z(s,1_{\RM_{n}(\CO_{F})},\vphi_{\pi})=\int_{G}1_{\RM_{n}(\CO_{F})}(g)\vphi_{\pi}(g)|\det g|^{s+\frac{n-1}{2}}dg
\\= \det(1-(c)q^{-s}|V)^{-1} = L(s,\pi,\std).
\end{align*}

\item $\CF_{\std}(1_{\RM_{n}(\CO_{F})}) = 1_{\RM_{n}(\CO_{F})}$.
\end{enumerate}

Let $\CS: \CH(G,K)\to \BC[\wh{T}/W]$ be the Satake transform, which is an isomorphism of algebras. Using the Cartan decomposition, the zeta integral $Z(s,1_{\RM_{n}(\CO_{F})},\vphi_{\pi})$ is equal to the Satake transform of the function $1_{\RM_{n}(\CO_{F})}|\det|^{s+\frac{n-1}{2}}$ evaluated at the Satake parameter $c\in \wh{T}/W$ of $\pi$.
For general $\rho$, one is naturally led to the following definition of the \emph{basic function} $1_{\rho,s}$ with parameter $s\in \BC$.
\begin{defin}\cite[Defintion~3.2.1]{wwlibasic}
The basic function $1_{\rho,s} = 1_{\rho}|\sig|^{s}$ with parameter $s\in \BC$ is the smooth bi-$K$-invariant function on $G$ such that
$$
\CS(1_{\rho,s})(c) = L(s,\pi,\rho)
$$
for any spherical representation $\pi$ of $G$ with Satake parameter $c$, where $\sig$ is the character defined in (\ref{shortexact}).
\end{defin}

Following the work of Godement-Jacquet \cite{gjzeta}, one hopes that the function $1_{\rho,-\frac{l}{2}} = 1_{\rho}|\sig|^{-\frac{l}{2}}$ lies in the function space $\CS_{\rho}(G)$ and has the following property
\begin{conjec}\label{fixed}
$\CF_{\rho}(1_{\rho,-\frac{l}{2}})=1_{\rho,-\frac{l}{2}}$.
\end{conjec}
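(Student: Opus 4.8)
The plan is to transport Conjecture \ref{fixed} to the spherical Hecke algebra $\CH(G,K)$ via the Satake transform, where $\CF_{\rho}$ — restricted to bi-$K$-invariant functions, i.e. realized through the spherical component $\Phi_{\rho}^{K}$ of the kernel — becomes an explicit multiplier, and to recognize the claimed fixed-point property as the unramified functional equation for $L(s,\pi,\rho)$. Concretely, for a bi-$K$-invariant function $f$ (living in the completion of $\CH(G,K)$ that accommodates the non-compactly supported $1_{\rho,-l/2}$) and a spherical representation $\pi_{c}$ with Satake parameter $c$ and zonal spherical function $\vphi_{c}$, set $\CM(f)(c,s) := Z(s,f,\vphi_{c}) = \CS\!\big(f|\sig|^{s+l/2}\big)(c)$. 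Two facts are then immediate: $\CM(1_{\rho,-l/2})(c,s) = \CS(1_{\rho}|\sig|^{s})(c) = L(s,\pi_{c},\rho)$ by the definition of the basic function, and $\CM$ is injective on bi-$K$-invariant functions, because $\CS$ is an algebra isomorphism onto $\BC[\wh{T}/W]$ (suitably completed) and the Satake parameters of spherical representations are Zariski dense.

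Next I would invoke the spherical incarnation of Conjecture \ref{operator} together with the $\gamma$-factor property of $\Phi_{\psi,\rho,s}$ (cf. Lemma \ref{distributiongamma}), i.e. the functional equation $Z(1-s,\CF_{\rho}(f),\vphi_{c}^{\vee}) = \gamma(s,\pi_{c},\rho,\psi)\,Z(s,f,\vphi_{c})$ for bi-$K$-invariant $f$, and specialize to $f = 1_{\rho,-l/2}$. Since $\vphi_{c}^{\vee}$ is the zonal spherical function of $\pi_{c}^{\vee} = \pi_{c^{-1}}$, this reads $\CM\big(\CF_{\rho}(1_{\rho,-l/2})\big)(c^{-1},1-s) = \gamma(s,\pi_{c},\rho,\psi)\,L(s,\pi_{c},\rho)$. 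Because $\pi_{c}$ is unramified and $\psi$ has conductor $\CO_{F}$, the local $\epsilon$-factor is $1$ — for general $\rho$ this step uses the local Langlands functoriality hypothesis to reduce to $\GL$ — so $\gamma(s,\pi_{c},\rho,\psi) = L(1-s,\pi_{c}^{\vee},\rho)/L(s,\pi_{c},\rho)$, and the right-hand side collapses to $L(1-s,\pi_{c^{-1}},\rho)$. Writing $c' = c^{-1}$ and $s' = 1-s$, this says $\CM\big(\CF_{\rho}(1_{\rho,-l/2})\big)(c',s') = L(s',\pi_{c'},\rho) = \CM(1_{\rho,-l/2})(c',s')$ for all $(c',s')$, and injectivity of $\CM$ gives $\CF_{\rho}(1_{\rho,-l/2}) = 1_{\rho,-l/2}$. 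Alternatively one bypasses the functional equation and computes $\CF_{\rho}(1_{\rho,-l/2}) = |\sig|^{-l-1}(\Phi_{\rho}^{K}*1_{\rho,-l/2}^{\vee})$ directly on the Satake side, using that convolution by $\Phi_{\rho}^{K}$ realizes the $\gamma$-multiplier; the same unramified functional equation for $L$-factors then closes the loop.

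The main obstacle is to make the functional equation genuinely available for the non-compactly supported $1_{\rho,-l/2}$: one must first define $\CF_{\rho}(1_{\rho,-l/2}) = |\sig|^{-l-1}(\Phi_{\psi,\rho}*1_{\rho,-l/2}^{\vee})$ through the spherical kernel, justify convergence of the convolution (or regularize it by the analytic parameter $s$ in a half-plane and continue), and — the delicate point — show the output is again a bi-$K$-invariant function in $\CS_{\rho}(G,K)$ rather than merely a distribution or a formal series. This is exactly where the normalization $l = 2\langle\rho_{B},\lam\rangle$ enters: it governs the growth of the support of $1_{\rho,-l/2}$ along the Cartan decomposition, so that both sides of the identity lie in the same completion of $\CH(G,K)$, consistently with the geometry of the Vinberg monoid $\wb{G}_{\rho}$ and the analysis of \cite{bouthier2016formal} and \cite{formalarcerrtum}. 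A secondary point: the chain of equalities above is a priori between rational functions of $q^{-s}$, so one must check that specialization introduces no spurious pole or zero, i.e. that equality of Satake transforms is honest equality of bi-$K$-invariant functions — which again reduces to compatibility of the pole structure of $L(s,\pi_{c},\rho)$ and $\gamma(s,\pi_{c},\rho,\psi)$ with membership in $\CS_{\rho}(G,K)$.
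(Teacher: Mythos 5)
Your proposal is correct and is essentially the paper's own argument: Proposition \ref{fixbasic} proves the statement by exactly your ``alternative'' route, computing $\CS\bigl(\Phi^{K}_{\psi,\rho}*(1_{\rho}^{\vee})_{\frac{l}{2}}\bigr)(c)=\frac{L(1+\frac{l}{2},\pi_{c},\rho)}{L(-\frac{l}{2},\pi_{c},\rho^{\vee})}\,L(-\frac{l}{2},\pi_{c},\rho^{\vee})=L(1+\frac{l}{2},\pi_{c},\rho)=\CS(1_{\rho,-\frac{l}{2}}|\sig|^{l+1})(c)$ and then inverting the Satake transform. The only remark worth making is that your appeal to Conjecture \ref{operator} is unnecessary (and would be circular, since it is conjectural): in the spherical setting the functional equation you use holds by construction, because $\Phi^{K}_{\psi,\rho,s}$ is defined as the inverse Satake transform of the unramified $\gamma$-factor with $\veps=1$.
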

It is shown in \cite[Lemma~5.8]{BK00} that Conjecture \ref{fixed} holds assuming the compatibility of parabolic descent and $\rho$-Fourier transform \cite[Conjecture~3.15]{BK00}.

One of the reasons that we care about the function $1_{\rho,s}$ is its role in Langlands' Beyond endoscopy program \cite{langlands2004beyond}. When $\Re(s)$ is sufficiently large, we expect to plug it into the Arthur-Selberg trace formula \cite{finis2011spectral}. On the spectral side, we would get a partial automorphic $L$-function. On the geometric side, the weighted orbital integrals of the basic function can tell us information about the automorphic $L$-function. For details the reader is recommended to read \cite{ngo2016hankel} and the last section of \cite{getz2015nonabelian}.
\newline

\subsection{Our Results }
We obtain results uniformly for both $p$-adic and archimedean local field. For convenience, we treat them separately in the following.

\paragraph{\textbf{$p$-Adic Case:}}
We give a construction of the spherical component of the function space $\CS_{\rho}(G)$ and the distribution kernel of $\rho$-Fourier transform $\Phi_{\psi,\rho}$, which we denote by $\CS_{\rho}(G,K)$ and $\Phi_{\psi,\rho}^{K}$.
Here we need to use the extension of \emph{Satake isomorphism} $\CS: \CH(G,K)\to \BC[\wh{T}/W]$ to \emph{almost compactly supported functions} $\CH_{ac}(G,K)$ in the sense of \cite[Proposition~2.3.2]{wwlibasic}, since the $L$-functions and $\gam$-factors are rational functions rather than polynomial functions on $\wh{T}/W$. The functions in $\CS_{\rho}(G,K)$ are not always compactly supported, but always \emph{almost compactly supported}.

\begin{defin}\label{fundef}
Define the function space $\CS_{\rho}(G,K)$ to be
$$\CS_{\rho}(G,K) = 1_{\rho,-\frac{l}{2}}*\CH(G,K).$$
Define the distribution kernel of $\rho$-Fourier transform $\Phi^{K}_{\psi,\rho,s}$ to be
$$\Phi^{K}_{\psi,\rho,s} = 1_{\rho,1+s+\frac{l}{2}}*\CS^{-1}(\frac{1}{L(-s-\frac{l}{2},\pi,\rho^{\vee})}).$$
\end{defin}
In Proposition \ref{basicingredient}, we show that when $\rho$ is the standard representation of $G=\GL(n)$, we actually have
$$\CS_{\std}(G,K) = 1_{\textrm{\std},-\frac{n-1}{2}}*\CH(G,K) = 1_{\RM_{n}(\CO_{F})}*\CH(G,K).$$
Here $\CS_{\std}(G,K)$ is the restriction of functions in $C^{\infty}_{c}(\RM_{n},K)$, the bi-$K$-invariant functions in $C^{\infty}_{c}(\RM_{n})$, to $\GL(n)$. The structure for the standard case will be our main ingredient for introducing Definition \ref{fundef}.

Based on Definition \ref{fundef} we can verify that the Conjecture \ref{zetaintegral} and  Conjecture \ref{operator} hold under the assumption that the functions and representations are spherical. We can also verify Conjecture \ref{fixed} without referring to \cite[Conjecture~3.15]{BK00}. More precisely, the following theorems holds
\begin{thm}\label{gcd}
Let $\pi$ be a spherical representation of $G$.
For every $f\in \CS_{\rho}(G,K)$, $\vphi\in \CC(\pi)$ the integral
\begin{align*}
Z(s,f,\vphi) =\int_{G}f(g)\vphi(g)|\sig(g)|^{s+\frac{l}{2}}dg
\end{align*}
is a rational function in $q^{s}$, and the fractional ideal $I_{\pi} = \{Z(s,f,\vphi) | f\in \CS_{\rho}(G,K), \vphi \in \CC(\pi)  \}$ is equal to $L(s,\pi,\rho) \BC[q^{s},q^{-s}]$.
\end{thm}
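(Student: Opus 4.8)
The plan is to collapse every $Z(s,f,\vphi)$ to a single integral against the zonal spherical function, evaluate that integral by separation of variables, and then read off the fractional ideal from the image of a Hecke-algebra homomorphism.

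\emph{Reduction to the zonal spherical function.} Every $f\in\CS_\rho(G,K)=1_{\rho,-\frac{l}{2}}*\CH(G,K)$ is bi-$K$-invariant, and so is $|\sig|$ (as $K$ is compact); averaging the defining integral of $Z(s,f,\vphi)$ over $K\times K$ therefore replaces $\vphi$ by its bi-$K$-average $\vphi^{KK}(g)=\int_{K\times K}\vphi(k_1gk_2)\,dk_1\,dk_2$. Since $\pi^K$ is one-dimensional, $\vphi^{KK}=a(\vphi)\,\omega_\pi$, where $\omega_\pi$ is the normalized zonal spherical function of $\pi$ and $a(\cdot)$ is a $\BC$-linear functional on $\CC(\pi)$ that is onto $\BC$ (take $\vphi$ a matrix coefficient built from the spherical vectors of $\pi$ and $\pi^\vee$). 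Hence $Z(s,f,\vphi)=a(\vphi)\,Z(s,f,\omega_\pi)$, and everything reduces to understanding $Z(s,1_{\rho,-\frac{l}{2}}*h,\omega_\pi)$ for $h\in\CH(G,K)$.

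\emph{The master identity.} Expanding the convolution and substituting $g=yx$ gives
\[
Z(s,1_{\rho,-\frac{l}{2}}*h,\omega_\pi)=\int_G\int_G 1_{\rho,s}(y)\,\bigl(h\,|\sig|^{s+\frac{l}{2}}\bigr)(x)\,\omega_\pi(yx)\,dx\,dy ,
\]
using $1_{\rho,-\frac{l}{2}}|\sig|^{s+\frac{l}{2}}=1_{\rho,s}$ and multiplicativity of $\sig$; this converges absolutely for $\Re(s)\gg0$ by the standard majorization of $\omega_\pi$ (the same bound gives absolute convergence of the integral defining $L(s,\pi,\rho)$, which establishes Conjecture~\ref{zetaintegral}(1) in the spherical case). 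Since $1_{\rho,s}$ is right-$K$-invariant, the substitution $y\mapsto yk^{-1}$ for $k\in K$ leaves the integrand unchanged; averaging over $k$ and using the functional equation $\int_K\omega_\pi(ykx)\,dk=\omega_\pi(y)\,\omega_\pi(x)$ factors the double integral as
\[
Z(s,1_{\rho,-\frac{l}{2}}*h,\omega_\pi)=\Bigl(\int_G 1_{\rho,s}(y)\,\omega_\pi(y)\,dy\Bigr)\Bigl(\int_G h(x)\,|\sig(x)|^{s+\frac{l}{2}}\,\omega_\pi(x)\,dx\Bigr)=L(s,\pi,\rho)\cdot\CS\bigl(h\,|\sig|^{s+\frac{l}{2}}\bigr)(c),
\]
where $c\in\wh{T}/W$ is the Satake parameter of $\pi$: the first factor is the defining property of the basic function $1_{\rho,s}$, and the second is the Satake transform of $h\,|\sig|^{s+\frac{l}{2}}$ evaluated at $c$, an element of $\BC[q^s,q^{-s}]$ since $l$ is even. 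Thus $Z(s,f,\vphi)$ is $L(s,\pi,\rho)$ times an element of $\BC[q^s,q^{-s}]$, hence a rational function of $q^s$, and $I_\pi\subseteq L(s,\pi,\rho)\,\BC[q^s,q^{-s}]$.

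\emph{Surjectivity onto the ideal, and the main obstacle.} From the above, $I_\pi=\BC\cdot L(s,\pi,\rho)\cdot\{\Psi_s(h):h\in\CH(G,K)\}$ with $\Psi_s(h):=\CS(h\,|\sig|^{s+\frac{l}{2}})(c)$, so it suffices that the $\BC$-span of the image of $\Psi_s$ be $\BC[q^s,q^{-s}]$. Since $\sig$ is multiplicative, $(h_1*h_2)|\sig|^{s+\frac{l}{2}}=(h_1|\sig|^{s+\frac{l}{2}})*(h_2|\sig|^{s+\frac{l}{2}})$, so $\Psi_s$ is an algebra homomorphism; in particular $\Psi_s(1_K)=1$ (so $L(s,\pi,\rho)\in I_\pi$), and for dominant $\mu$, $\Psi_s(1_{K\mu(\vpi)K})=q^{-\langle\sig,\mu\rangle s}\cdot\bigl(q^{-\langle\sig,\mu\rangle l/2}\,\CS(1_{K\mu(\vpi)K})(c)\bigr)$, the pairing being well-defined on double cosets because $\sig$ is $W$-invariant. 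As $\sig\colon G\to\BG_m$ is surjective, $\langle\sig,\cdot\rangle$ maps $X_*(T)$ onto $\BZ$, so one may pick dominant $\mu^{+},\mu^{-}$ with $\langle\sig,\mu^{\pm}\rangle=\pm1$; exhibiting such $\mu^{\pm}$ with $\CS(1_{K\mu^{\pm}(\vpi)K})(c)\neq 0$ puts $q^{\pm s}$, up to nonzero scalars, in the image of $\Psi_s$, and the $\BC$-algebra they generate then spans $\BC[q^s,q^{-s}]$, giving $I_\pi=L(s,\pi,\rho)\,\BC[q^s,q^{-s}]$. The one step that is not bookkeeping is exactly this nonvanishing: since $\sig$ is trivial on $G_0$ it kills every coroot, so multiplication by $|\sig|^{s+\frac{l}{2}}$ respects the $\BZ$-grading of $\BC[\wh{T}/W]$ by $\langle\sig,\cdot\rangle$, and the span of the image of $\Psi_s$ equals $\bigoplus_{d\in\BZ}q^{-ds}\cdot\mathrm{ev}_c\bigl(\BC[\wh{T}/W]_d\bigr)$; one needs $\mathrm{ev}_c$ nonzero on every graded piece. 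For $c$ in general position any small dominant weight works, but at special Satake parameters an entire graded piece can vanish at $c$, so I would handle this either by realizing the needed Laurent monomial as a $\BC$-linear combination of Hecke operators supported on the whole coset $\{\langle\sig,\cdot\rangle=d\}$, or by proving the surjectivity for $\pi$ in general position and then specializing. Steps one and two, which carry the substantive content, go through unchanged.
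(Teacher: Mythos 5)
Your first two steps are, in substance, the paper's own argument: reduce to the zonal spherical function by bi-$K$-averaging $\vphi$ (this is exactly how the proof of Proposition \ref{basicingredient} begins), then factor $Z(s,1_{\rho,-\frac{l}{2}}*h,\omega_\pi)=L(s,\pi,\rho)\,\CS\bigl(h\,|\sig|^{s+\frac{l}{2}}\bigr)(c)$; the paper phrases the same computation as $Z(s,f,\Gam_{\chi})=\CS(f_{s+\frac{l}{2}})(c)$ together with multiplicativity of the Satake transform. This yields the rationality statement and the inclusion $I_\pi\subseteq L(s,\pi,\rho)\BC[q^{s},q^{-s}]$. (Your parenthetical ``since $l$ is even'' is unnecessary and false in general --- for the standard representation of $\GL_n$ one has $l=n-1$ --- but harmless: $q^{-\langle\sig,\mu\rangle l/2}$ is just a constant.)

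The gap you flag in the third step is genuine, and neither of your proposed fixes can close it, because the nonvanishing you need actually fails at special Satake parameters. Take $G=\GL_2$, $\rho$ the standard representation, and $c=\diag(1,-1)$, the Satake parameter of the irreducible unramified principal series attached to the trivial and the unramified quadratic characters: every $W$-invariant Laurent polynomial of odd $\sig$-degree, e.g. $x^{a}y^{1-a}+x^{1-a}y^{a}$, vanishes at $c$, so the span of your $\Psi_s(h)$ is only $\BC[q^{2s},q^{-2s}]$. Linear combinations supported on a whole coset $\{\langle\sig,\cdot\rangle=d\}$ still vanish, and surjectivity in general position does not specialize, so the evaluation-theoretic route cannot produce $q^{\pm s}$ here. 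The paper closes this step by a different mechanism: invoking Proposition \ref{basicingredient} and Remark \ref{rmkingredient}, it compares the bi-$K$-invariant family with the Godement--Jacquet family $\{Z(s,f,\vphi)\,:\,f\in C^{\infty}_{c}(G),\ \vphi\in\CC(\pi)\}$, which is stable under multiplication by $q^{\pm s}$ (translate $f$ or $\vphi$ by $g_0$ with $|\sig(g_0)|=q^{\mp 1}$), contains the constants, and consists of Laurent polynomials, hence equals $\BC[q^{s},q^{-s}]$; it is this module structure, not pointwise nonvanishing of graded pieces, that supplies the missing powers of $q^{s}$. Note, however, that your difficulty also exposes the delicate point in that comparison: for bi-$K$-invariant $f$ the zeta integral sees only the bi-$K$-average of $\vphi$, so in the $\GL_2$ example above the set $I_\pi$ itself equals $L(s,\pi,\rho)\BC[q^{2s},q^{-2s}]$, and the stated identity can only be read as an equality of the fractional ideals these integrals generate.
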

The idea for the proof of Theorem \ref{gcd} is as follows. We notice that the function $f\in \CS_{\rho}(G,K)$ is bi-$K$-invariant. Following the proof of Proposition \ref{basicingredient}, we can actually assume that $\vphi$ is bi-$K$-invariant, which means that $\vphi$ is a scalar multiple of the zonal spherical function associated to $\pi$. Then, up to multiplying by a constant, the zeta integral $Z(s,f,\vphi)$ is equal to $\CS(f_{s+\frac{l}{2}})(c)$, where $c\in \wh{T}/W$ is the Satake parameter associated to $\pi$. Now Theorem \ref{gcd} follows from the definition of $\CS_{\rho}(G,K)$ and Remark \ref{rmkingredient}.

\begin{thm}\label{inv}
For any $f\in \CS_{\rho}(G,K)$, define the $\rho$-Fourier transform $\CF_{\rho}$ as in \cite{BK00} by the formula
\begin{align*}
\CF_{\rho}(f) = |\sig|^{-l-1}(\Phi_{\psi,\rho}*f^{\vee}).
\end{align*}
Then $\CF_{\rho}$ extends to a unitary operator on $L^{2}(G,K,|\sig|^{l+1}dg)$ and the space $\CS_{\rho}(G,K)$ is $\CF_{\rho}$-invariant.
\end{thm}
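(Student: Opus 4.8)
The plan is to transport everything to the spectral side via the Satake isomorphism $\CS$, extended to the almost compactly supported Hecke algebra $\CH_{ac}(G,K)$ as in \cite[Proposition~2.3.2]{wwlibasic}, and to reduce both assertions to elementary identities between local $L$-factors. First I would assemble the dictionary. Every function in play --- the basic functions $1_{\rho,s}$, the elements of $\CH(G,K)$ and of $\CS_{\rho}(G,K)=1_{\rho,-\frac{l}{2}}*\CH(G,K)$, and the spherical kernel $\Phi^{K}_{\psi,\rho}:=\Phi^{K}_{\psi,\rho,0}$ --- is bi-$K$-invariant, so under $\CS$ all the convolutions become pointwise products. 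Writing $\pi_{c}$ for the spherical representation with Satake parameter $c\in\wh T/W$, the ingredients are $\CS(1_{\rho,s})(c)=L(s,\pi_{c},\rho)$, $\CS(f^{\vee})(c)=\CS(f)(c^{-1})$, and $\CS(f|\sig|^{b})(c)=\CS(f)(c\,\wh\sig(q^{-b}))$ (the last because $|\sig|^{b}$ is the unramified character with Satake parameter $\wh\sig(q^{-b})$). The single structural input is the standing hypothesis $\rho\circ\wh\sig=(z\mapsto z\,\Id_{V_{\rho}})$, which converts an $|\sig|$-twist into an honest shift of the $L$-factor:
\[
L(s,\pi_{c\,\wh\sig(q^{-b})},\rho)=L(s+b,\pi_{c},\rho),\qquad L(s,\pi_{c\,\wh\sig(q^{-b})},\rho^{\vee})=L(s-b,\pi_{c},\rho^{\vee}).
\]
Finally, reading Definition~\ref{fundef} off directly, $\CS(\Phi^{K}_{\psi,\rho,0})(c)=L(1+\tfrac{l}{2},\pi_{c},\rho)\big/L(-\tfrac{l}{2},\pi_{c},\rho^{\vee})$, which is precisely $\gam(-\tfrac{l}{2},\pi_{c}^{\vee},\rho,\psi)$.

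Next, $\CF_{\rho}$-invariance. For $h\in\CH(G,K)$, unwinding $\CF_{\rho}(h)=|\sig|^{-l-1}(\Phi^{K}_{\psi,\rho,0}*h^{\vee})$ through $\CS$ and applying the shift relations yields
\[
\CS(\CF_{\rho}(h))(c)=\frac{L(-\tfrac{l}{2},\pi_{c},\rho)}{L(1+\tfrac{l}{2},\pi_{c},\rho^{\vee})}\,\CS\bigl((h|\sig|^{l+1})^{\vee}\bigr)(c),
\]
and the same formula persists on $\CH_{ac}(G,K)$, in particular on $\CS_{\rho}(G,K)$. Now take $f=1_{\rho,-\frac{l}{2}}*h_{0}$ with $h_{0}\in\CH(G,K)$: a second use of the shift relations shows $\CS((f|\sig|^{l+1})^{\vee})(c)$ carries a factor $L(1+\tfrac{l}{2},\pi_{c},\rho^{\vee})$ which cancels the denominator, leaving $\CS(\CF_{\rho}(f))(c)=L(-\tfrac{l}{2},\pi_{c},\rho)\,\CS((h_{0}|\sig|^{l+1})^{\vee})(c)=\CS\bigl(1_{\rho,-\frac{l}{2}}*(h_{0}|\sig|^{l+1})^{\vee}\bigr)(c)$. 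Since $|\sig|^{l+1}$ is locally constant and nowhere zero, $(h_{0}|\sig|^{l+1})^{\vee}\in\CH(G,K)$, so by injectivity of $\CS$ we conclude $\CF_{\rho}(f)=1_{\rho,-\frac{l}{2}}*(h_{0}|\sig|^{l+1})^{\vee}\in\CS_{\rho}(G,K)$. Running the computation twice also gives $\CF_{\rho}^{2}=\Id$, since the two multipliers (at $c$ and at $c^{-1}$) are reciprocal once one uses $L(s,\pi_{c^{-1}},\rho)=L(s,\pi_{c},\rho^{\vee})$.

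For unitarity I would first normalize the measure away: $f\mapsto f|\sig|^{(l+1)/2}$ is an isometric isomorphism $L^{2}(G,K,|\sig|^{l+1}dg)\to L^{2}(G,K,dg)$, so it suffices to show the conjugated operator is unitary on $L^{2}(G,K,dg)$. Using the shift relations once more, the formula above becomes, after this conjugation, the operator whose Satake transform is
\[
\CS(g)(c)\ \longmapsto\ \frac{L(\tfrac{1}{2},\pi_{c},\rho)}{L(\tfrac{1}{2},\pi_{c},\rho^{\vee})}\,\CS(g)(c^{-1}).
\]
Invoking the Plancherel isometry for the spherical Hecke algebra (Macdonald), restriction of $\CS$ to the compact form $\wh T_{\mathrm{un}}$ of $\wh T$ induces an isometric isomorphism $L^{2}(G,K,dg)\cong L^{2}(\wh T_{\mathrm{un}}/W,\mu_{\mathrm{Pl}})$. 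Two elementary facts then finish the argument: (i) the measure $\mu_{\mathrm{Pl}}$ is invariant under $c\mapsto c^{-1}$, which is forced by the fact that $f\mapsto f^{\vee}$ is isometric on $L^{2}(G,K,dg)$ and corresponds to $\CS(g)\mapsto\CS(g)(\cdot^{-1})$; and (ii) for $c\in\wh T_{\mathrm{un}}$ the matrix $\rho(c)$ has all eigenvalues of modulus $1$ (as $\rho$ carries $\wh T_{\mathrm{un}}$ into a conjugate of $\mathrm{U}(V_{\rho})$), so that $L(\tfrac{1}{2},\pi_{c},\rho^{\vee})=\overline{L(\tfrac{1}{2},\pi_{c},\rho)}$ and the displayed multiplier is continuous of modulus $1$ on $\wh T_{\mathrm{un}}$. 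Hence the conjugated operator is the composition of multiplication by a unimodular continuous function with pullback along the $\mu_{\mathrm{Pl}}$-preserving involution $c\mapsto c^{-1}$, so it is unitary; consequently $\CF_{\rho}$ is unitary on $L^{2}(G,K,|\sig|^{l+1}dg)$ (consistently with $\CF_{\rho}^{2}=\Id$).

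The whole argument is a translation through the Satake dictionary, and the only genuine external inputs are the spherical Plancherel isometry together with the inversion-symmetry of $\mu_{\mathrm{Pl}}$, plus the trivial observation that $\rho$ maps $\wh T_{\mathrm{un}}$ into a conjugate of $\mathrm{U}(V_{\rho})$. I expect the real work --- and the only place errors would creep in --- to be organizational: keeping the many $|\sig|$-twists consistent with the normalization $l=2\langle\rho_{B},\lam\rangle$, and checking at each step that the non-compactly-supported objects in play (the elements of $\CS_{\rho}(G,K)$, and $\Phi^{K}_{\psi,\rho,0}$ itself) genuinely lie in $\CH_{ac}(G,K)$ and in $L^{2}(G,K,|\sig|^{l+1}dg)$, which one reads off from boundedness of their Satake transforms on the relevant shifted unitary contour. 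I do not anticipate any analytic difficulty beyond this bookkeeping.
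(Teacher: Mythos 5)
Your argument is correct in substance, and its two halves relate to the paper differently. The invariance half is essentially the paper's own proof: the paper also reduces to Satake bookkeeping, routing the cancellation through the identity $\Phi^{K}_{\psi,\rho}*1^{\vee}_{\rho,-\frac{l}{2}}=1_{\rho,1+\frac{l}{2}}$ (established in Proposition \ref{fixbasic}) rather than writing the multiplier $L(-\tfrac{l}{2},\pi_{c},\rho)/L(1+\tfrac{l}{2},\pi_{c},\rho^{\vee})$ explicitly, but the computation is the same, and your derived formulas (including $\CF_{\rho}^{2}=\Id$ on the spherical part) check out against the paper's conventions. The unitarity half, however, takes a genuinely different route. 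The paper never invokes the spherical Plancherel theorem: it verifies $\langle\CF_{\rho}(f),\CF_{\rho}(h)\rangle=\langle f,h\rangle$ for $f,h\in\CH(G,K)$ by rewriting both inner products as convolutions evaluated at the identity and showing, via multiplicativity and injectivity of the extended Satake transform, that the two convolutions coincide; the key inputs are the rational-function identity $\CS(\Phi^{K}_{\psi,\rho})(c)\,\CS(\wb{\Phi^{K}_{\psi,\rho}}^{\vee})(c\cdot q^{-(l+1)})=1$ and the observation (Remark \ref{realvalued}) that $\Phi^{K}_{\psi,\rho}$ has real coefficients, so complex conjugation is harmless. Your argument instead conjugates to $L^{2}(G,K,dg)$, uses the Macdonald isometry onto $L^{2}(\wh T_{\mathrm{un}}/W,\mu_{\mathrm{Pl}})$, and identifies the operator as a unimodular continuous multiplier composed with the inversion involution. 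What the paper's route buys is self-containedness — only the Satake isomorphism and an algebraic identity of $L$-factors are needed, with conjugation handled by the realness remark; what your route buys is a cleaner analytic picture: the $L^{2}$-extension, the absence of poles of the multiplier on the unitary contour, and surjectivity (hence genuine unitarity, which the paper leaves implicit) all become transparent, at the cost of importing the spherical Plancherel theorem and of two small facts you should make explicit — that the multiplier $L(\tfrac12,\pi_{c},\rho)/L(\tfrac12,\pi_{c},\rho^{\vee})$ is $W$-invariant and nonvanishing/pole-free on $\wh T_{\mathrm{un}}$ (clear since the eigenvalues of $\rho(c)q^{-1/2}$ have modulus $q^{-1/2}<1$), and that the inversion-invariance of $\mu_{\mathrm{Pl}}$ deduced from the isometry $f\mapsto f^{\vee}$ requires a short polarization/Stone--Weierstrass step (or simply Macdonald's explicit formula for the Plancherel density). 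Neither point is a real obstruction, and both proofs share the same mild informality about convergence of the pairings on $\CH_{ac}(G,K)$.
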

The idea for the proof of Theorem \ref{inv} is as follows. To show that $\CF_{\rho}$ extends to a unitary operator on $L^{2}(G,K,|\sig|^{l+1}dg)$, equivalently we need to show the following equality
$$
<\CF_{\rho}(f),\CF_{\rho}(h)>_{L^{2}(G,K,|\sig|^{l+1}dg)} = <f,h>_{L^{2}(G,K,|\sig|^{l+1}dg)}
$$
for any $f, h\in \CH(G,K)$, since the smooth compactly supported functions are dense in $L^{2}(G,K,|\sig|^{l+1}dg)$.

We first rewrite the integration as follows
\begin{align*}
<\CF_{\rho}(f),\CF_{\rho}(h)>_{L^{2}(G,K,|\sig|^{l+1}dg)} &=\CF_{\rho,l+1}(f)*\wb{\CF_{\rho}(h)}^{\vee}(e)
\\
<f,h>_{L^{2}(G,K,|\sig|^{l+1}dg)} &= \wb{h}_{l+1}*f^{\vee}(e).
\end{align*}
Then as in the proof of Proposition \ref{unitaryprop} we can show that after the Satake transform, the functions $\CF_{\rho,l+1}(f)*\wb{\CF_{\rho}(h)}^{\vee}$ and
$\wb{h}_{l+1}*f^{\vee}$ are equal to each other as a rational function on $\wh{T}/W$. Hence we get the first part of Theorem \ref{inv}. To show that the space $\CS_{\rho}(G,K)$ is $\CF_{\rho}$-invariant, we show that $\CF_{\rho}(\CS_{\rho}(G,K))$ and $\CS_{\rho}(G,K)$ have the same image under Satake transform.

\begin{thm}\label{thmfixed}
$\CF_{\rho}(1_{\rho,-\frac{l}{2}}) = 1_{\rho,-\frac{l}{2}}$.
\end{thm}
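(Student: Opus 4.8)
The plan is to carry out the whole computation on the Satake side. Since the unit $1_{K}$ of $\CH(G,K)$ acts as the identity for convolution on bi-$K$-invariant functions, Definition \ref{fundef} already gives $1_{\rho,-\frac{l}{2}} = 1_{\rho,-\frac{l}{2}} * 1_{K} \in \CS_{\rho}(G,K)$, so $\CF_{\rho}$ is defined on it and, by (the proof of) Theorem \ref{inv}, $\CF_{\rho}(1_{\rho,-\frac{l}{2}}) \in \CS_{\rho}(G,K) \subset \CH_{ac}(G,K)$. As every function occurring below lies in $\CH_{ac}(G,K)$, on which the Satake isomorphism $\CS$ of \cite[Proposition~2.3.2]{wwlibasic} is still multiplicative for convolution, it suffices to prove the asserted equality after applying $\CS$ and evaluating at an arbitrary $c \in \wh{T}/W$, the Satake parameter of a spherical $\pi$.

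Next I would record the effect of the three operations building $\CF_{\rho}$. Multiplication by $|\sig|^{t}$ merely shifts the parameter of the basic function, $1_{\rho,s}|\sig|^{t} = 1_{\rho,s+t}$, so $\CS(1_{\rho,s+t})(c) = L(s+t,\pi,\rho)$ by definition of $1_{\rho,\bullet}$. The involution $g \mapsto g^{-1}$ sends $c$ to $c^{-1}$, the Satake parameter of $\pi^{\vee}$; combined with the identity $L(s,\pi^{\vee},\rho) = L(s,\pi,\rho^{\vee})$ this gives $(1_{\rho,s})^{\vee} = 1_{\rho^{\vee},s}$, i.e. $\CS\bigl((1_{\rho,s})^{\vee}\bigr)(c) = L(s,\pi,\rho^{\vee})$. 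Finally convolution becomes pointwise multiplication. The distribution kernel appearing in $\CF_{\rho}$ is $\Phi_{\psi,\rho}^{K} = \Phi_{\psi,\rho,0}^{K}$, for which Definition \ref{fundef} gives $\CS(\Phi_{\psi,\rho,0}^{K})(c) = L(1+\tfrac{l}{2},\pi,\rho)\,/\,L(-\tfrac{l}{2},\pi,\rho^{\vee})$. Hence
\begin{align*}
\CS\bigl(\Phi_{\psi,\rho}^{K} * (1_{\rho,-\frac{l}{2}})^{\vee}\bigr)(c)
= \frac{L(1+\tfrac{l}{2},\pi,\rho)}{L(-\tfrac{l}{2},\pi,\rho^{\vee})}\cdot L(-\tfrac{l}{2},\pi,\rho^{\vee})
= L(1+\tfrac{l}{2},\pi,\rho),
\end{align*}
so $\Phi_{\psi,\rho}^{K} * (1_{\rho,-\frac{l}{2}})^{\vee} = 1_{\rho,1+\frac{l}{2}}$. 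Multiplying by $|\sig|^{-l-1}$ shifts the parameter by $-l-1$, and since $1+\tfrac{l}{2}-l-1 = -\tfrac{l}{2}$ we conclude $\CF_{\rho}(1_{\rho,-\frac{l}{2}}) = 1_{\rho,1+\frac{l}{2}}|\sig|^{-l-1} = 1_{\rho,-\frac{l}{2}}$.

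The computation itself is thus a single cancellation, so the real content — and the main obstacle — is the bookkeeping that makes it legitimate: that $1_{\rho,-\frac{l}{2}}$, $(1_{\rho,-\frac{l}{2}})^{\vee} = 1_{\rho^{\vee},-\frac{l}{2}}$, $\Phi_{\psi,\rho,0}^{K}$, and their convolution all belong to $\CH_{ac}(G,K)$ (only the auxiliary factor $\CS^{-1}(1/L(-\tfrac{l}{2},\pi,\rho^{\vee}))$ is genuinely compactly supported, $L^{-1}$ being a polynomial on $\wh{T}$), that convolution of such almost compactly supported functions is well defined and Satake-multiplicative, and that the $\CF_{\rho}$ used here is exactly the one extended to $\CS_{\rho}(G,K)$ in Theorem \ref{inv} — all of which is supplied by the extended Satake formalism of \cite{wwlibasic} together with the construction of $\Phi_{\psi,\rho}^{K}$ and the proof of Theorem \ref{inv} earlier in the paper. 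Note that, in contrast to the derivation via \cite[Lemma~5.8]{BK00}, this argument does not invoke \cite[Conjecture~3.15]{BK00}. As a consistency check, specializing to $\rho = \std$ on $\GL(n)$, where $1_{\std,-\frac{n-1}{2}} = 1_{\RM_{n}(\CO_{F})}$ by Proposition \ref{basicingredient}, the statement recovers $\CF_{\std}(1_{\RM_{n}(\CO_{F})}) = 1_{\RM_{n}(\CO_{F})}$ of \cite{gjzeta}.
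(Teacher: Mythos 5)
Your argument is correct and is essentially the paper's own proof of Theorem \ref{thmfixed} (Proposition \ref{fixbasic}): apply the Satake transform, use $\CS(\Phi^{K}_{\psi,\rho})(c)=L(1+\frac{l}{2},\pi,\rho)/L(-\frac{l}{2},\pi,\rho^{\vee})$ together with $\CS\bigl((1_{\rho,-\frac{l}{2}})^{\vee}\bigr)(c)=L(-\frac{l}{2},\pi,\rho^{\vee})$, cancel, and invert the Satake isomorphism. The only adjustment I would make is to drop the appeal to Theorem \ref{inv} for the membership $\CF_{\rho}(1_{\rho,-\frac{l}{2}})\in\CS_{\rho}(G,K)$ --- in the paper that invariance is itself deduced from the identity $\Phi^{K}_{\psi,\rho}*1^{\vee}_{\rho,-\frac{l}{2}}=1_{\rho,1+\frac{l}{2}}$ proved here, so citing it risks circularity, and it is in any case unnecessary since your direct computation already exhibits the convolution as $1_{\rho,1+\frac{l}{2}}\in\CH_{ac}(G,K)$.
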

The idea for the proof of Theorem \ref{thmfixed} follows from the direct computation of the Satake transform of $\CF_{\rho}(1_{\rho,-\frac{l}{2}}) $ and $1_{\rho,-\frac{l}{2}}$. We show that they coincide with each other after Satake transform as a rational function on $\wh{T}/W$, from which we deduce that they are equal to each other.

The detailed proof of the theorems are given in Section \ref{proofpadic}.
\newline

\paragraph{\textbf{Archimedean Case:}}
We give a construction of $\Phi^{K}_{\psi,\rho}$ using the \emph{spherical Plancherel transform}.
More precisely,
\begin{defin}
We define $\Phi^{K}_{\psi,\rho,s} = 1_{\rho,1+s+\frac{l}{2}}*\CH^{-1}(\frac{1}{L(-s-\frac{l}{2},\pi,\rho^{\vee})})$.
Here $\CH$ is the \emph{spherical Plancherel transform}.
\end{defin}

Parallel to the $p$-adic case, we can verify that Conjecture \ref{fixed} holds through showing that $\CF_{\rho}(1_{\rho,-\frac{l}{2}})$ and $1_{\rho,-\frac{l}{2}}$ have the same image under spherical Plancherel transform $\CH$.

We also study asymptotic properties of $1_{\rho,s}$ and $\Phi^{K}_{\psi,\rho,s}$. We let $S^{p}(K\bs G/K)$ be the $L^{p}$-Harish-Chandra Schwartz space, where $0<p\leq 2$ is any real number. Then we can prove the following theorem.
\begin{thm}
\begin{enumerate}
\item If $F\cong \BR$, and $\Re(s)$ satisfies the following inequality
\begin{align*}
\Re(s)> \max\{ \vpi_{k}(\mu) | \quad 1\leq k\leq n,\mu \in C^{\veps\rho_{B}} \},
\end{align*}

or
\item
If $F\cong \BC$, and $\Re(s)$ satisfies the following inequality
\begin{align*}
\Re(s)> \max\{ \frac{\vpi_{k}(\mu)}{2} | \quad 1\leq k\leq n,\mu \in C^{\veps\rho_{B}} \},
\end{align*}
\end{enumerate}
then the function $1_{\rho,s}$ lies in $S^{p}(K\bs G/K)$.
\end{thm}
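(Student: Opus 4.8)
The plan is to pass to the spherical Plancherel (Harish--Chandra) transform $\CH$ and to invoke the Trombi--Varadarajan description of the $L^{p}$-Harish--Chandra Schwartz space. Since $1_{\rho,s}$ is bi-$K$-invariant, that description says: $1_{\rho,s}\in S^{p}(K\bs G/K)$ if and only if $\CH(1_{\rho,s})$ extends to a $W$-invariant holomorphic function on the tube $T^{\veps}=\{\lambda\in\Fa^{*}_{\BC}:\Re\lambda\in C^{\veps\rho_{B}}\}$ over the polytope $C^{\veps\rho_{B}}$, which together with all of its derivatives is rapidly decreasing there, uniformly on $T^{\veps}$. Here $\veps=\tfrac{2}{p}-1$, and since $\CH$ is a topological isomorphism of $S^{p}(K\bs G/K)$ onto this space, it suffices to verify these properties for the function $\CH(1_{\rho,s})(\lambda)=L(s,\pi_{\lambda},\rho)$. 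Its $W$-invariance is immediate from $\pi_{\lambda}\cong\pi_{w\lambda}$, so everything reduces to an analysis of the archimedean $L$-factor $L(s,\pi_{\lambda},\rho)$ as a function of $\lambda$.

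First I would make the $L$-factor explicit. Writing $\vpi_{1},\dots,\vpi_{n}$ for the weights of $\rho$ (counted with multiplicity), regarded as real (in fact rational) linear functionals on the parameter space through the standard pairing, one has, for $F\cong\BR$,
$$
L(s,\pi_{\lambda},\rho)=\prod_{k=1}^{n}\Gamma_{\BR}\bigl(s+\vpi_{k}(\lambda)\bigr),
$$
and for $F\cong\BC$ the analogous product of $\Gamma_{\BC}$-factors, in which the weights enter with a factor $\tfrac12$ reflecting the normalization $|z|=z\wb{z}$ on $\BC$. The poles of $\Gamma_{\BR}$ lie in $-2\BZ_{\geq0}$ and those of $\Gamma_{\BC}$ in $-\BZ_{\geq0}$, so each factor is holomorphic on the half-plane $\{\Re z>0\}$. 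Now $C^{\veps\rho_{B}}$ is a bounded, $W$-stable convex polytope, and it is centrally symmetric because $w_{0}\rho_{B}=-\rho_{B}$; hence for $\lambda\in T^{\veps}$ and every $k$,
$$
\Re\bigl(s+\vpi_{k}(\lambda)\bigr)=\Re(s)+\vpi_{k}(\Re\lambda)\ \geq\ \Re(s)-\max\{\,\vpi_{k}(\mu)\ :\ \mu\in C^{\veps\rho_{B}}\,\}.
$$
The hypothesis on $\Re(s)$ (and, for $F\cong\BC$, its stated analogue) therefore bounds this below by a constant $\delta>0$ independent of $\lambda\in T^{\veps}$ and of $k$; consequently every $\Gamma$-factor, and hence $\CH(1_{\rho,s})$, is holomorphic on $T^{\veps}$.

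It remains to verify the uniform rapid decay. Since $\Re\lambda$ ranges over the bounded polytope $C^{\veps\rho_{B}}$, the real part of each argument $s+\vpi_{k}(\lambda)$ stays in a bounded subinterval $[\delta,M]$ of $(0,\infty)$, while its imaginary part equals $\Im(s)+\vpi_{k}(\Im\lambda)$. By Stirling's formula, $\Gamma_{\BR}$ and $\Gamma_{\BC}$ — and all of their derivatives, since the digamma function grows only logarithmically — decay exponentially along vertical lines, uniformly for the real part in $[\delta,M]$. Multiplying the $n$ factors gives $|\CH(1_{\rho,s})(\lambda)|\lesssim\exp\bigl(-c\sum_{k}|\vpi_{k}(\Im\lambda)|\bigr)$ for some $c>0$; since $\rho$ is faithful the weights $\vpi_{1},\dots,\vpi_{n}$ separate points, so $\xi\mapsto\sum_{k}|\vpi_{k}(\xi)|$ is a norm on $\Fa^{*}$ and the bound becomes $\lesssim e^{-c'\|\Im\lambda\|}$: exponential decay in $\Im\lambda$, a fortiori faster than any polynomial, uniformly on $T^{\veps}$. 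The polynomial-in-$\lambda$ factors and the derivatives occurring in the Harish-Chandra--Schwartz seminorms are absorbed into the same exponential bound. Thus $\CH(1_{\rho,s})$ lies in the Trombi--Varadarajan image of $S^{p}(K\bs G/K)$, whence $1_{\rho,s}\in S^{p}(K\bs G/K)$.

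The main obstacle is the bookkeeping in the first step: pinning down $L(s,\pi_{\lambda},\rho)$ precisely as this product of $\Gamma$-factors in both the real and complex cases — tracking the normalizations coming from $\rho|_{\BG_{m}}=z\,\Id$ and, for $F\cong\BC$, from $|z|=z\wb{z}$ — and matching the Trombi--Varadarajan tube with the polytope $C^{\veps\rho_{B}}$ so that the stated inequality on $\Re(s)$ is exactly the one keeping every $\Gamma$-argument in $\{\Re z>0\}$. Once the transform is identified, the holomorphy and the uniform rapid decay follow directly from the pole locations of $\Gamma$ and from Stirling's estimate.
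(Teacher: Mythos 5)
Your proposal is correct and follows essentially the same route as the paper: transfer the problem through the spherical Plancherel isomorphism onto $S(\Fa^{*}_{\veps})^{W}$, write $\CH(1_{\rho,s})(\lambda)=L(s,\pi_{\lambda},\rho)$ explicitly as a product of archimedean $\Gamma$-factors in the weights $\vpi_{k}(\lambda)$, use the hypothesis on $\Re(s)$ to keep the $\Gamma$-arguments away from the poles on the tube, and obtain uniform rapid decay from Stirling together with the logarithmic growth of $\Gamma^{(n)}/\Gamma$ and the fact (faithfulness of $\rho$) that $\sum_{k}|\vpi_{k}(\cdot)|$ dominates a norm on $\Fa^{*}$, exactly as in the paper's Theorems \ref{estimateforgamma}, \ref{derivativeofgamma} and Lemma \ref{assumpweight}. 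Apart from a harmless difference of convention (you place the bounded polytope in $\Re\lambda$ rather than $\Im\lambda$, which is immaterial since $C^{\veps\rho_{B}}$ is centrally symmetric), the argument matches the paper's proof.
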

Here $\{\vpi_{k}\}_{k=1}^{n}$ are the weights of the reprentation $\rho$, $\veps = \frac{2}{p}- 1$, and $C^{\veps\rho_{B}}$ is the convex hull in $\Fa^{*}$ generated by elements $W\cdot\veps \rho_{B}$.

\begin{thm}
\begin{enumerate}
\item If $F\cong \BR$, and $\Re(s)$ satisfies the following inequality
\begin{align*}
\Re(s)> -1-\frac{l}{2} + \max\{ \vpi_{k}(\mu) | \quad 1\leq k\leq n,\mu \in C^{\veps\rho_{B}} \},
\end{align*}

or

\item If $F\cong \BC$, and $\Re(s)$ satisfies the following inequality
\begin{align*}
\Re(s)> -\frac{1}{2}-\frac{l}{4} + \max\{ \frac{\vpi_{k}(\mu)}{2} | \quad 1\leq k\leq n,\mu \in C^{\veps\rho_{B}} \},
\end{align*}
\end{enumerate}
then the function
$\Phi^{K}_{\psi,\rho,s}$ lies in $S^{p}(K\bs G/K)$.
\end{thm}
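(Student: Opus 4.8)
The plan is to argue entirely on the spectral side, parallel to the $p$-adic case: transport everything through the spherical Plancherel transform $\CH$ and appeal to the Trombi--Varadarajan type description of the image $\CH\bigl(S^{p}(K\bs G/K)\bigr)$ as the $W$-invariant functions on the tube over the convex hull $C^{\veps\rho_B}$ ($\veps=\tfrac2p-1$) that are holomorphic there and satisfy the Paley--Wiener--Schwartz estimates already used for the preceding theorem on $1_{\rho,s}$. Since $\CH$ turns convolution into multiplication, $\CH\circ\CH^{-1}=\mathrm{id}$, and $\CH(1_{\rho,t})(\lambda)=L(t,\pi_\lambda,\rho)$ by the (archimedean form of the) defining property of the basic function, the definition of $\Phi^{K}_{\psi,\rho,s}$ gives at once
\begin{align*}
\CH(\Phi^{K}_{\psi,\rho,s})(\lambda)\;=\;\frac{L\bigl(1+s+\tfrac l2,\pi_\lambda,\rho\bigr)}{L\bigl(-s-\tfrac l2,\pi_\lambda,\rho^{\vee}\bigr)}\;=\;\prod_{k=1}^{n}\frac{\Gamma_F\bigl(1+s+\tfrac l2+\vpi_k(\lambda)\bigr)}{\Gamma_F\bigl(-s-\tfrac l2-\vpi_k(\lambda)\bigr)},
\end{align*}
using that the weights of $\rho^{\vee}$ are $-\vpi_1,\dots,-\vpi_n$. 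The theorem then amounts to showing that, under the stated lower bound on $\Re(s)$, this explicit product of ratios of $\Gamma_F$-factors lies in $\CH\bigl(S^{p}(K\bs G/K)\bigr)$.

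I would first settle holomorphy on the tube. The numerator is exactly $\CH(1_{\rho,1+s+\frac l2})$, so by the preceding theorem, applied with $1+s+\tfrac l2$ in place of $s$, it is holomorphic on the tube and satisfies the required estimates provided $\Re\bigl(1+s+\tfrac l2\bigr)>\max\{\vpi_k(\mu):1\le k\le n,\ \mu\in C^{\veps\rho_B}\}$ in the real case --- which is the first inequality in the statement --- the poles of the numerator $\Gamma_F$-factors showing that the bound cannot be substantially relaxed; the complex case is formally the same after incorporating the normalization $|z|=z\wb z$, which rescales the spherical parameter and, running the same analysis, produces the thresholds of case (2). It then remains to show that dividing by $L(-s-\tfrac l2,\pi_\lambda,\rho^{\vee})$ keeps us inside $\CH\bigl(S^{p}(K\bs G/K)\bigr)$: the reciprocal $\prod_k\Gamma_F(-s-\tfrac l2-\vpi_k(\lambda))^{-1}$ is entire in $\lambda$, and by Stirling's asymptotics its growth over the tube is matched against the decay of the numerator $\Gamma_F$-factors, using $\dim\rho=\dim\rho^{\vee}=n$ and the pairing of weights; in the complex case the zeros of this reciprocal additionally cancel poles of the numerator, which is what accounts for the extra room in case (2) over the naive threshold. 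These estimates must be carried out uniformly over the tube and compatibly with the convex region $C^{\veps\rho_B}$.

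Combining the holomorphy input with the $\Gamma_F$-estimates and re-invoking the characterization of $\CH\bigl(S^{p}(K\bs G/K)\bigr)$ yields $\Phi^{K}_{\psi,\rho,s}\in S^{p}(K\bs G/K)$. The main obstacle I expect is this last step: $\CH^{-1}\bigl(L(-s-\tfrac l2,\pi,\rho^{\vee})^{-1}\bigr)$ is not a priori a tame object --- the reciprocal $L$-factor grows along $i\Fa^{*}$ --- so the convolution in the definition of $\Phi^{K}_{\psi,\rho,s}$ has to be read on the spectral side, and one must prove that the quotient of $\Gamma_F$-products genuinely meets the $S^{p}$-criterion. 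This requires the precise matching of the Stirling exponents of numerator and denominator uniformly on the tube, careful bookkeeping of the pole--zero cancellations against $C^{\veps\rho_B}$, and attention to the distinct archimedean factors $\Gamma_\BR$ and $\Gamma_\BC$, which is the source of the different numerical thresholds in cases (1) and (2).
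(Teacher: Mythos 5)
Your spectral-side setup coincides with the paper's: both reduce the theorem to showing that $\CH(\Phi^{K}_{\psi,\rho,s})(\lam)=L(1+s+\frac{l}{2},\pi_{\lam},\rho)/L(-s-\frac{l}{2},\pi_{\lam},\rho^{\vee})$ lies in $\CS(\Fa^{*}_{\veps})^{W}$ and then invoke the isomorphism $\CH\colon S^{p}(K\bs G/K)\to \CS(\Fa^{*}_{\veps})^{W}$ of Theorem \ref{isothm}. But the decisive step --- actually verifying the seminorm estimates of $\CS(\Fa^{*}_{\veps})$ for this quotient of $\Gam$-factors --- is precisely what you defer (``the main obstacle I expect is this last step''), so what you have is a plan, not a proof. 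Worse, the route you sketch for that step does not work as described: by Stirling, the numerator factor $\Gam\big(\frac{1+s+\frac{l}{2}+i\vpi_{k}(\lam)}{2}\big)$ and the reciprocal denominator factor $\Gam\big(-\frac{s+\frac{l}{2}+i\vpi_{k}(\lam)}{2}\big)^{-1}$ have arguments with the \emph{same} imaginary part $\pm\frac{\Im(s)+\vpi_{k}(x)}{2}$ (writing $\lam=x+iy$), so their exponential factors cancel identically and ``matching the Stirling exponents'' leaves only polynomial factors --- it produces no decay at all. Nor is there any pole--zero cancellation mechanism in play; in particular your explanation of the case (2) threshold is not what happens: the different constants for $F\cong\BC$ come solely from the normalization $|z|_{\BC}=z\wb{z}$, which puts $2s$ into the $\Gam$-arguments, the binding condition being $\Re(2s+1+\frac{l}{2})>\max_{k,\mu}\vpi_{k}(\mu)$.

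The paper supplies the missing ingredient by a different manipulation. A lemma applies the reflection formula $\Gam(z)\Gam(1-z)=\pi/\sin(\pi z)$ to each reciprocal factor, rewriting $\Gam\big(-\frac{s+\frac{l}{2}+i\vpi_{k}(\lam)}{2}\big)^{-1}$ as $\frac{1}{\pi}\sin\big(\pi\frac{2+s+\frac{l}{2}+i\vpi_{k}(\lam)}{2}\big)\Gam\big(\frac{2+s+\frac{l}{2}+i\vpi_{k}(\lam)}{2}\big)$, so that $\CH(\Phi^{K}_{\psi,\rho,s})$ becomes a product of bounded exponentials, sine factors, and $\Gam$-factors with the two shifted arguments $1+s+\frac{l}{2}$ and $2+s+\frac{l}{2}$. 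The $\Gam$-factors are then estimated exactly as in the proof of Theorem \ref{asympforbasic} (via Theorems \ref{estimateforgamma} and \ref{derivativeofgamma} and Lemma \ref{assumpweight}), the sine factors are treated by the paper as Paley--Wiener elements of $\CS(\Fa^{*}_{\veps})$, and the Fr\'echet-algebra property of $\CS(\Fa^{*}_{\veps})$ together with $W$-invariance closes the argument; the two shifts are the source of the stated bound $\Re(s)>-1-\frac{l}{2}+\max_{k,\mu}\vpi_{k}(\mu)$ (and its complex analogue). To complete your proposal you would need either this reflection-formula restructuring or a genuinely new estimate for the quotient; the direct numerator/denominator matching you describe cannot reach the rapid-decay seminorms defining $\CS(\Fa^{*}_{\veps})$.
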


The idea for proving the asymptotic theorems is based on several asymptotic estimations for classical $\Gam$-function and its derivatives, which are recalled and proved in the beginning of Section \ref{archiproof}.

The details are presented in Section \ref{archiproof} and Section \ref{archiproofcplx}.

Jayce Getz \cite{getz2015nonabelian} also has similar descriptions for $\CF_{\rho}(f)$, where $f$ lies in $C^{\infty}_{c}(G,K)$. His description of the Fourier transform uses the relation between $\CF_{\rho}$ and the standard one on $\GL(n)$ also via \emph{spherical Plancherel transform}, in which the $\rho$-Fourier transform is not written as an explicit kernel function. Using the functional equation, one can observe that his definition coincides with our definition of $\CF_{\rho}$. On the other hand, using the explicit estimation for the kernel function $\Phi^{K}_{\psi,\rho,s}$, we find that our domain for the Fourier transform $\CF_{\rho}$ is bigger than $C^{\infty}_{c}(G,K)$. For instance, we can take Fourier transform for the basic function $1_{\rho,s}$.
\newline

\paragraph{\textbf{Organization of Paper}}
In Section \ref{sec:satake}, we have a quick review of the Satake isomorphism. In Section \ref{sec:structure}, we give a description of the structure of $\CS_{\std}(G,K)$, which are the restriction of functions in $C^{\infty}_{c}(\RM_{n},K)$ to $G=\GL(n)$. In Section \ref{sec:basicfun}, we briefly review the theory of basic functions. In Section \ref{proofpadic} we prove the unramified part of the conjectures mentioned in the introduction.

In Section \ref{sec:spplancherel}, we review the theory of spherical plancherel transform. In Section \ref{sec:corrreal} and \ref{sec:corrcplx}, we review the Langlands classification and Langlands correspondence of spherical representations for $\GL_{n}(\BR)$ and $\GL_{n}(\BC)$. From the Langlands classification and Langlands correspondence, we obtain the explicit formula of local $L$-factors. In Section \ref{archiproof} and \ref{archiproofcplx} we prove asymptotic properties of $1_{\rho,s}$ and $\Phi^{K}_{\psi,\rho,s}$, from which we can deduce the theorems mentioned in the introduction.
\newline

\paragraph{\textbf{Acknowledgement }}
I would like to express my sincere gratitude to my advisor Prof. Dihua Jiang, who encourages me continuously, and gives me helpful advice when I was writing down the paper. I would also like to thank Fangyang Tian for discussing the proof of Lemma \ref{assumpweight}, Chen Wan for reading the manuscript carefully, and Jorin Schug for correcting some grammars. Finally, I would like to thank the anonymous referee for several useful comments.

\section{$p$-Adic Case}
The theory of spherical functions and spherical representations for $p$-adic groups are developed by I. Satake in \cite{satake63}. In particular, Satake proves that under the Satake transform $\CS$, the spherical Hecke algebra $\CH(G,K)$ is isomorphic to $\BC[\wh{T}/W]$, which nowadays is called the \emph{Satake isomorphism}.

On the other hand, $\CH(G,K)$ is contained in the conjectural function space $\CS_{\rho}(G,K)$ as a proper subspace. In order to obtain a similar description for $\CS_{\rho}(G,K)$, we need to extend the Satake isomorphism to $\CS_{\rho}(G,K)$. This is achieved in \cite[Proposition~2.3.2]{wwlibasic}. For the basic function $1_{\rho,-\frac{l}{2}}$, although it is not compactly supported on $G$, it is compactly supported on the sets $\{ g\in G |\quad |\sig(g)| = q^{-n} \}_{n\in \BZ}$. For different $n$, the sets are disjoint. This means that the function $1_{\rho,-\frac{l}{2}}$ is \emph{almost compactly supported} as defined in \cite[Definition~2.3.1]{wwlibasic}. In particular we can apply the Satake isomorphism to $1_{\rho,-\frac{l}{2}}$.

Using the Satake isomorphism, we will give a definition of $\CS_{\rho}(G,K)$ and $\Phi^{K}_{\psi,\rho,s}$, and we can verify several conjectures in this case as mentioned in the introduction.

\subsection{Satake Isomorphism}\label{sec:satake}
In this section, we review the \emph{Satake isomorphism}. The main references are \cite{cartier79}, \cite{grosssatake} and \cite{satake63}.

First we give the definition of Satake transform.
\begin{defin}[Satake Transform]
For $f\in \CH(G,K)$, the function $\CS(f)$ is defined to be
\begin{align*}
\CS(f)(t) = \del_{B}^{\frac{1}{2}}(t)\int_{N}f(tn)dn.
\end{align*}
\end{defin}

In \cite{satake63}, Satake proves the fact that $\CS$ is an algebra isomorphism from $\CH(G,K)$ to $\CH(T,T_{K})^{W}$, where both algebras are equipped with convolution structure.

Using the canonical $W$-equivariant isomorphisms
\begin{align*}
T/T_{K} \cong X_{*}(T)\cong X^{*}(\wh{T}),
\end{align*}
we have
\begin{align*}
\CH(T,T_{K})^{W}\cong \BC[X_{*}(T)]^{W}\cong \BC[X^{*}(\wh{T})]^{W}.
\end{align*}
Since $\BC[X^{*}(\wh{T})]$ consists of $\BC$-linear combinations of algebraic characters of $\wh{T}$, it can naturally be identified with algebraic functions on $\wh{T}$. Therefore $\BC[X^{*}(\wh{T})]^{W}\cong \BC[\wh{T}/W]$. Sometimes we abuse the notation of Satake transform $\CS$ with the image identified with $\BC[\wh{T}/W]$.

\subsection{Structure of $\CS_{\std}(G,K)$}\label{sec:structure}
In this section, we review the theory of the zeta integrals for the standard $L$-function of $\GL(n)$ over a non-archimedean local field following the approach of Godement-Jacquet. The main references are \cite{gjzeta} and
\cite{jacquet79}. In the end we give a description of the structure of $\CS_{\std}(G,K)$.

In \cite{gjzeta}, Godement and Jacquet established the theory of standard $L$-function for multiplicative group of central simple algebras following the approach of \cite{tatethesis}. For our purpose, we only focus on
$G=\GL(n)$, though the story for multiplicative group of central simple algebras is almost the same.

Let $(\pi,V)$ be an admissible representation of $G$ with smooth admissible contragredient dual $(\pi^{\vee},\wt{V})$. Let
\begin{align*}
<,>: \wt{V}\times V &\to \BC\\
(\wt{v},v) &\to <\wt{v},v>
\end{align*}
be the canonical linear pairing between $\wt{V}$ and $V$.

Let $\CC(\pi)$ be the $\BC$-linear span of the following functions
\begin{align*}
\pi_{\wt{v},v}: g\to <\wt{v},\pi(g)v>, v\in V, \wt{v}\in \wt{V}.
\end{align*}
Elements in $\CC(\pi)$ are called the matrix coefficients of $\pi$.

By the admissibility of $\pi$, the smooth contragredient of $\pi^{\vee}$ is canonically isomorphic to $\pi$. It follows that for any $\vphi \in \CC(\pi)$, the function
\begin{align*}
\vphi^{\vee}(g)=\vphi(g^{-1})
\end{align*}
is a matrix coefficient of $\pi^{\vee}$.

Let $\RM_{n}(F)$ be the space of $n\times n$ matrices over $F$. Let $C^{\infty}_{c}(\RM_{n})$ be the space of smooth compactly supported functions on $\RM_{n}(F)$.

For $\vphi \in \CC(\pi)$, $f\in C^{\infty}_{c}(\RM_{n})$, $s\in \BC$, one set
\begin{align}\label{zetint}
Z(s,f,\vphi) = \int_{G}f(g)\vphi(g)|\det g|^{s+\frac{n-1}{2}}d^{\times}g.
\end{align}

In \cite{gjzeta}, the following proposition was proved.

\begin{pro}\cite[Proposition~(1.2)]{jacquet79}\label{gjmain}
Suppose that $\pi$ is an irreducible and admissible representation of $G$, then

\begin{enumerate}
\item There exists $s_{0}\in \BC$ such that the integral (\ref{zetint}) converges absolutely for $\Re (s) >\Re(s_{0})$.

\item The integral (\ref{zetint}) is given by a rational function in $q^{-s}$, where $q$ is the cardinality of the residue field of $F$. Moreover, the family of rational functions in $q^{-s}$
\begin{align*}
I(\pi)= \{Z(s,f,\vphi) | \quad f\in C^{\infty}_{c}(\RM_{n}), \vphi\in \CC(\pi) \}
\end{align*}
admits a common denominator which does not depend on $f$ or $\vphi$.

\item Let $\psi \neq 1$ be an additive character of $F$. There exists a rational function $\gam(s,\pi,\psi)$ such that for any $\vphi\in \CC(\pi)$ and $f\in C^{\infty}_{c}(\RM_{n})$, we have the following functional equation
\begin{align}\label{funeq}
Z(1-s,\CF(f),\vphi^{\vee}) = \gam(s,\pi,\psi)Z(s,f,\vphi),
\end{align}
where $\CF(f)$ is the Fourier transform of $f$ w.r.t. $\psi$
\begin{align*}
\CF(f)(x) = \int_{\RM_{n}}f(y)\psi(\tr(yx))dy.
\end{align*}
Here we choose $dy$ to be the self-dual Haar measure on $\RM_{n}(F)$, in the sense that
\begin{align*}
\CF(\CF (f))(x) = f(-x).
\end{align*}
\end{enumerate}
\end{pro}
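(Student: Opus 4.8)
\emph{Proof plan (the Godement--Jacquet argument).} I would establish (1)--(3) in order, using the Cartan and Iwasawa decompositions of $\GL(n)$ and standard facts about admissible representations. For the convergence in (1), the two inputs are the compact support of $f$ -- say $f$ vanishes outside $\vpi^{-N_{0}}\RM_{n}(\CO_{F})$ -- and a growth bound on matrix coefficients: for admissible $\pi$ every $\vphi\in\CC(\pi)$ satisfies $|\vphi(k_{1}\lam(\vpi)k_{2})|\leq C\,q^{\langle\mu_{\pi},\lam\rangle}$ uniformly in $k_{1},k_{2}\in K$ and $\lam\in X_{*}(T)_{+}$, for some $\mu_{\pi}$ depending only on $\pi$ (reduce to the torus using that $\vphi$ is $K$-finite, then invoke finiteness of the exponents of $\pi$ along $B$). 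Writing $\GL(n)=\coprod_{\lam\in X_{*}(T)_{+}}K\lam(\vpi)K$, the cell $K\lam(\vpi)K$ has volume comparable to $q^{\langle 2\rho_{B},\lam\rangle}$, on it $|\det g|=q^{-d(\lam)}$ with $d(\lam)$ the $\vpi$-valuation of $\det\lam(\vpi)$, and the support condition restricts the sum to the cone on which $d(\lam)$ is bounded below and tends to $+\infty$ along every extremal ray. Substituting, $|Z(s,f,\vphi)|$ is dominated by $\sum_{\lam}q^{\langle 2\rho_{B}+\mu_{\pi},\lam\rangle-\Re(s)\,d(\lam)}$, which converges once $\Re(s)$ is large, since then the factor $|\det g|^{\Re(s)}$ beats the at-most-linear-exponential growth of volume and matrix coefficient on the relevant cone.

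\emph{Rationality and common denominator.} For (2) I would package the integral into the operator $\pi_{s}(f):=\int_{\GL(n)}f(g)|\det g|^{s+\frac{n-1}{2}}\pi(g)\,d^{\times}g$, so that $Z(s,f,\pi_{\wt{v},v})=\langle\wt{v},\pi_{s}(f)v\rangle$. Choosing a small compact open $K_{0}$ with $v\in V^{K_{0}}$ and averaging $f$ over $K_{0}\times K_{0}$ (which only replaces $\vphi$ by an average of matrix coefficients, hence stays in $\CC(\pi)$), I may take $f$ bi-$K_{0}$-invariant, so $\pi_{s}(f)$ preserves the finite-dimensional space $V^{K_{0}}$. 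Grading $\mathrm{supp}(f)\cap\GL(n)$ by $m=$ the $\vpi$-valuation of $\det g$ -- bounded below by compactness of $\mathrm{supp}(f)$, unbounded above toward $\det=0$ -- gives $\pi_{s}(f)|_{V^{K_{0}}}=\sum_{m\geq m_{0}}q^{-m(s+\frac{n-1}{2})}A_{m}$ with $A_{m}\in\End(V^{K_{0}})$, so that for $\Re(s)\gg0$ the integral $Z(s,f,\vphi)$ is a holomorphic function of $q^{-s}$. To upgrade this to a rational function with a denominator independent of $f$ and $\vphi$, I would use that $\pi$ embeds in a principal series $I(\chi)$ of $\GL(n)$: every matrix coefficient of $\pi$ is then a matrix coefficient of $I(\chi)$, so the zeta integrals for $\pi$ lie among those for $I(\chi)$, and for $I(\chi)$ the Iwasawa decomposition factors the integral through the torus into a product of rank-one Tate integrals, each rational in $q^{-s}$ with denominator a single Euler factor read off from $\chi$. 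Hence $I(\pi)$ has a common denominator depending only on $\pi$; that it is nonzero follows from $Z(s,1_{K_{0}},\pi_{\wt{v},v})=\vol(K_{0})\langle\wt{v},v\rangle$, a nonzero constant for suitable $K_{0}$-fixed $\wt{v},v$. The meromorphic continuation of $Z(s,f,\vphi)$ to all of $\BC$ is then deduced from the functional equation in (3).

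\emph{Functional equation.} For (3), first $\CF$ preserves $C^{\infty}_{c}(\RM_{n})$ because $\psi$ has conductor $\CO_{F}$ and the Haar measure is self-dual, and $\CF\CF f(x)=f(-x)$ is Fourier inversion on the $F$-vector space $\RM_{n}(F)$. For the functional equation itself I would argue by uniqueness. On the common strip of convergence of $Z(s,f,\vphi)$ and $Z(1-s,\CF f,\vphi^{\vee})$ -- nonempty once (1) is available for $\pi$ and for $\pi^{\vee}$, as one checks by first treating $f$ supported inside $\GL(n)$ -- both $(f,\wt{v},v)\mapsto Z(s,f,\pi_{\wt{v},v})$ and $(f,\wt{v},v)\mapsto Z(1-s,\CF f,\vphi^{\vee})$ with $\vphi=\pi_{\wt{v},v}$ are $\GL(n)\times\GL(n)$-equivariant trilinear functionals on $C^{\infty}_{c}(\RM_{n})\otimes\wt{V}\otimes V$ of the same type, the equivariance of the second following from $\tr((h_{1}yh_{2})g)=\tr(y(h_{2}gh_{1}))$ together with the $\GL(n)\times\GL(n)$-invariance of $\langle\cdot,\cdot\rangle$. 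A Gelfand--Kazhdan-type multiplicity-one statement for $\RM_{n}$ forces the space of such functionals to be at most one-dimensional, so the two are proportional, with a factor $\gamma(s,\pi,\psi)$ depending only on $\pi,\psi,s$; evaluating at a pair coming from the principal series $I(\chi)\supseteq\pi$ as above, where $Z$ is computed explicitly by Iwasawa and Tate, shows $\gamma$ is rational in $q^{-s}$, and applying the equation twice gives the reciprocity $\gamma(s,\pi,\psi)\gamma(1-s,\pi^{\vee},\psi)=\omega_{\pi}(-1)$. Feeding the rational $\gamma$ back into $Z(1-s,\CF f,\vphi^{\vee})=\gamma(s,\pi,\psi)Z(s,f,\vphi)$, together with the absolute convergence of the left side for $\Re(s)\ll0$, yields the meromorphic continuation of $Z(s,f,\vphi)$ and confines its poles to finitely many values of $q^{-s}$ determined by $\pi$.

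\emph{Where the difficulty lies.} The one genuinely non-formal step is the rationality-with-uniform-denominator in (2): passing from the convergent power series $\sum_{m\geq m_{0}}q^{-m(s+\frac{n-1}{2})}\langle\wt{v},A_{m}v\rangle$ to a rational function of $q^{-s}$ whose denominator is fixed once and for all, independently of $f$ and $\vphi$. This is where one must exploit the realization of $\pi$ inside an induced representation -- equivalently, control of its exponents -- rather than merely its admissibility; by contrast (1) is a volume-and-growth estimate and (3), once the Fourier analysis on $\RM_{n}(F)$ is in hand, is a formal equivariance-and-uniqueness argument.
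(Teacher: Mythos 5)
You should first note that the paper does not prove this proposition at all: it is quoted verbatim from Godement--Jacquet via \cite[Proposition~(1.2)]{jacquet79}, so the only meaningful benchmark is the classical argument, whose overall architecture (Cartan-decomposition convergence estimate, rationality via admissibility and parabolic induction, functional equation via equivariance plus a uniqueness statement for distributions on $\RM_{n}$) your sketch does reproduce. Within that architecture, however, there are two genuine gaps. First, in (2) you assert that every irreducible admissible $\pi$ embeds in a principal series $I(\chi)$ induced from characters of the diagonal torus; this is false whenever $\pi$ is supercuspidal (or has supercuspidal support off the torus), since $\mathrm{Hom}_{G}(\pi,\mathrm{Ind}_{B}^{G}\chi)\cong\mathrm{Hom}_{T}(\pi_{N},\chi\delta_{B}^{1/2})$ vanishes when the Jacquet module $\pi_{N}$ is zero. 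So your reduction to rank-one Tate integrals only treats representations with supercuspidal support on $T$ (e.g.\ the spherical ones the surrounding paper cares about), not the general statement. The classical repair is either Jacquet's subrepresentation theorem with supercuspidal inducing data on a Levi, an unfolding reducing the zeta integrals of the induced representation to Godement--Jacquet integrals on the Levi, and a separate base case for supercuspidals (whose coefficients are compactly supported modulo the center, so the integral collapses to a Tate integral for the central character), or else Casselman's asymptotic expansion of matrix coefficients along $B$, which yields rationality with denominator controlled by the exponents of $\pi$.

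Second, in (3) your proportionality argument is run ``on the common strip of convergence'' of $Z(s,f,\vphi)$ and $Z(1-s,\CF(f),\vphi^{\vee})$, which you claim is nonempty. It is not in general: already for the trivial representation of $\GL_{n}$ with $n\geq 2$ and $f=1_{\RM_{n}(\CO_{F})}$, the first integral converges only for $\Re(s)>\frac{n-1}{2}$ and the second only for $\Re(s)<1-\frac{n-1}{2}$, so for $n\geq 3$ the two half-planes are disjoint (and they merely touch for $n=2$); restricting first to $f$ supported in $\GL(n)$ does not help, because $\CF(f)$ is then no longer supported in $\GL(n)$ and because the uniqueness argument must quantify over all $f\in C^{\infty}_{c}(\RM_{n})$ at a single value of $s$. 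The functional equation therefore cannot be obtained by comparing absolutely convergent integrals at a common $s$; one must first establish the rational continuation of both families (your part (2)) and then apply the Gelfand--Kazhdan/orbit-stratification uniqueness statement to the continued, rational-function-valued equivariant forms (equivalently, at a generic value of $q^{-s}$), which is how Godement--Jacquet and the modern treatments proceed. Relatedly, that uniqueness statement for equivariant forms on $\RM_{n}$ is the real content of (3); in your sketch it is only named, and its proof (analysis of the rank stratification of $\RM_{n}$, showing no equivariant distributions are supported on the singular strata for generic $s$) is precisely the non-formal step you would need to supply.
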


Now we prove the claim in the introduction, that the space
$\CS_{\std}(G,K)$, which consists of the restriction to $G=\GL(n)$ of bi-$K$-invariant functions in the space $C^{\infty}_{c}(\RM_{n}(F))$, has the following simple expression
$$
1_{\RM_{n}(\CO)}*\CH(G,K) = 1_{\std,-\frac{n-1}{2}}*\CH(G,K).
$$

\begin{pro}\label{basicingredient}
$\CS_{\std}(G,K) = 1_{\RM_{n}(\CO_{F})}*\CH(G,K)$.
\end{pro}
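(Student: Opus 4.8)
The plan is to prove the two-sided inclusion $\CS_{\std}(G,K) = 1_{\RM_{n}(\CO_{F})}*\CH(G,K)$ by analyzing both sides under the Cartan decomposition $G = \coprod_{\lam\in X_*(T)_+} K\lam(\vpi)K$, exploiting the fact that a bi-$K$-invariant function in $C^\infty_c(\RM_n(F))$ is supported on integral matrices and that its values on the cosets $K\lam(\vpi)K$ inside $G=\GL(n)$ are controlled by the elementary divisor type $\lam$. First I would make the inclusion $1_{\RM_{n}(\CO_{F})}*\CH(G,K) \subseteq \CS_{\std}(G,K)$ essentially formal: $1_{\RM_n(\CO_F)}$ is the restriction to $\GL(n)$ of the characteristic function of $\RM_n(\CO_F)$, which lies in $C^\infty_c(\RM_n,K)$; and convolving on the right by an element of $\CH(G,K)$ amounts to a finite $\BC$-linear combination of right translates by elements of $G$, each of which preserves the space of restrictions of $C^\infty_c(\RM_n,K)$ (since right translation by $g\in\GL(n)$ sends $C^\infty_c(\RM_n)$ into itself — it multiplies the support by $g$ — and preserves bi-$K$-invariance up to the usual averaging). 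One only needs that right convolution by $1_{K g K}$ stays inside $C^\infty_c(\RM_n,K)$, which follows because $K g K$ is a finite union of cosets $K g'$ with $g'\in \GL(n)\cap\RM_n(\CO_F)\cdot(\text{bounded})$.

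For the reverse inclusion $\CS_{\std}(G,K) \subseteq 1_{\RM_{n}(\CO_{F})}*\CH(G,K)$, I would argue by induction on the support. Given $f\in C^\infty_c(\RM_n,K)$, its restriction $f|_{\GL(n)}$ is bi-$K$-invariant and supported on $\GL(n)\cap \RM_n(\CO_F) \cdot \vpi^{-N}$ for some $N$; after multiplying by $\sig$-powers it suffices to treat the case $f$ supported on $\GL(n)\cap\RM_n(\CO_F)$. By the theory of elementary divisors (Smith normal form over $\CO_F$), $\GL(n)\cap \RM_n(\CO_F) = \coprod_{\lam\in X_*(T)_+,\ \lam\geq 0} K\lam(\vpi)K$, so $f|_{\GL(n)}$ is a finite $\BC$-linear combination of the characteristic functions $1_{K\lam(\vpi)K}$ over dominant $\lam$ with nonnegative entries. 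The key claim is then: each such $1_{K\lam(\vpi)K}$ (for $\lam\geq 0$ dominant) lies in $1_{\RM_n(\CO_F)}*\CH(G,K)$. To see this, note $1_{\RM_n(\CO_F)}|_{\GL(n)} = 1_{K\cdot 0\cdot K} + (\text{terms with }\lam>0)$, i.e. $1_{\RM_n(\CO_F)} = \sum_{\lam\geq 0}1_{K\lam(\vpi)K}$, and convolution by elements of $\CH(G,K)$ acts triangularly with respect to a suitable partial order on dominant coweights (the classical fact that $1_{K\mu(\vpi)K}*1_{K\nu(\vpi)K}$ is supported on $K\lam(\vpi)K$ with $\lam\leq\mu+\nu$ and has leading term at $\lam=\mu+\nu$). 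Running this triangular system, one peels off the top coweights one at a time: subtract an appropriate $1_{\RM_n(\CO_F)}*h$ to kill the highest $\lam$ appearing, and induct downward. Since $\CH(G,K)$ is closed under this and the support is finite, the process terminates, giving $f|_{\GL(n)}\in 1_{\RM_n(\CO_F)}*\CH(G,K)$.

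The main obstacle I anticipate is making the triangularity argument precise enough to guarantee \emph{invertibility} of the relevant transition matrix — i.e. that the leading coefficient in $1_{K\mu(\vpi)K}*1_{K\nu(\vpi)K}$ at $\lam=\mu+\nu$ is nonzero (it is a positive power of $q$, essentially the index computation $\mathrm{vol}(K\mu(\vpi)K)$-type count) and that the partial order on dominant coweights is well-founded on the finite set in play. One clean way to sidestep the bookkeeping entirely is to pass through the Satake transform: by Proposition~\ref{gjmain} and the Godement--Jacquet computation, $\CS(1_{\RM_n(\CO_F)}|\det|^{s+\frac{n-1}{2}})$ evaluated at a Satake parameter is $L(s,\pi,\std) = \det(1-cq^{-s})^{-1}$, so $1_{\RM_n(\CO_F)} = 1_{\std,-\frac{n-1}{2}}$ by the defining property of the basic function, and then $1_{\RM_n(\CO_F)}*\CH(G,K)$ maps under $\CS$ to $\det(1-cq^{-1/2}\cdots)^{-1}\cdot\BC[\wh T/W]$ — wait, more carefully, to $L(0,\cdot,\std)\,\BC[\wh T/W]$ as a submodule — and separately one shows $\CS_{\std}(G,K)$ has the same image by a direct computation of Satake transforms of $1_{K\lam(\vpi)K}$ for $\lam\geq 0$. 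Since $\CS$ is an algebra isomorphism, equality of images gives equality of the function spaces. I would present the proof via whichever of these two routes the author's earlier material (the Satake section and the $L$-factor formula) makes shortest, most likely the Satake-transform route since it reduces everything to the already-cited Godement--Jacquet identity.
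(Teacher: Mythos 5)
Your easy inclusion $1_{\RM_{n}(\CO_{F})}*\CH(G,K)\subseteq\CS_{\std}(G,K)$ is fine. The hard inclusion, however, breaks at a point earlier than the obstacle you flagged: the claim that $f|_{\GL(n)}$ is a \emph{finite} linear combination of the $1_{K\lam(\vpi)K}$ with $\lam\geq 0$ is false. A bi-$K$-invariant function in $C^{\infty}_{c}(\RM_{n})$ restricted to $\GL(n)$ is in general supported on infinitely many double cosets --- $1_{\RM_{n}(\CO_{F})}$ itself restricts to $\sum_{\lam\geq 0}1_{K\lam(\vpi)K}$ --- because the cosets $K\lam(\vpi)K$ accumulate at the singular matrices inside $\RM_{n}(\CO_{F})$. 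This false finiteness is what makes both of your routes look workable. The triangular ``peel off the top coweight'' induction cannot terminate, since $1_{\RM_{n}(\CO_{F})}*h$ is never compactly supported on $G$: each correction reintroduces an infinite tail, so the real issue is not invertibility of a leading coefficient but exact cancellation of these tails (in your $\GL_1$ example the tails cancel only because $h=1_{\CO^{\times}}-1_{\vpi\CO^{\times}}$ is precisely $\CS^{-1}$ of the reciprocal $L$-factor). In the Satake variant, computing $\CS(1_{K\lam(\vpi)K})$ for $\lam\geq 0$ only determines the image of finite combinations, which lies inside $\BC[\wh{T}/W]$ and is strictly smaller than $\CS(1_{\RM_{n}(\CO_{F})})\BC[\wh{T}/W]$ (the latter contains the non-polynomial rational function $\det(1-c\,q^{(n-1)/2})^{-1}$), so it cannot identify the image of $\CS_{\std}(G,K)$.

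The inclusion $\CS_{\std}(G,K)\subseteq 1_{\RM_{n}(\CO_{F})}*\CH(G,K)$ is not a formal support statement: it encodes the Godement--Jacquet theorem that for every $f\in C^{\infty}_{c}(\RM_{n},K)$ the zeta integral has no denominator beyond $L(s,\pi,\std)$. That is how the paper argues: reduce arbitrary matrix coefficients to the zonal spherical function $\Gam_{\chi}$, so one may take $f$ bi-$K$-invariant; observe $Z(s,f,\Gam_{\chi})=\CS(f|\det|^{s+\frac{n-1}{2}})(c)$; combine Proposition \ref{gjmain} with the identification $\{Z(s,f,\Gam_{\chi})\mid f\in\CH(G,K)\}=\BC[q^{s},q^{-s}]$ to conclude that $\CS(f)$ lies in $\CS(1_{\RM_{n}(\CO_{F})})\BC[\wh{T}/W]$ after specializing $s=\frac{1-n}{2}$; then invert the Satake transform, extended to almost compactly supported functions as in \cite[Proposition~2.3.2]{wwlibasic}. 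To repair your write-up, replace the elementary-divisor finiteness step by this Godement--Jacquet input; note also that your reduction of the support to $\RM_{n}(\CO_{F})$ is effected by convolution with a central Hecke element, not by multiplying by powers of $|\sig|$, which does not move supports.
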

\begin{proof}
Let $\pi = \pi_{c}$ be a spherical representation of $G$ with Satake parameter $c\in \wh{T}/W$. By Proposition \ref{gjmain}
\begin{align*}
\{ \frac{Z(s,f,\vphi_{\pi})}{L(s,\pi)}| \quad  f\in C^{\infty}_{c}(\RM_{n}(F)), \vphi_{\pi}\in \CC(\pi)  \}= \BC[q^{-s},q^{s}].
\end{align*}
Now for any matrix coefficient $\vphi_{\pi}(g) = <\wt{v},\pi(g)v>$ in $\CC(\pi)$, there exists finitely many constant numbers $c_{i}$ in $\BC$, $h^{i}_{0}$ and $g^{i}_{0}$ $(1\leq i\leq n)$ in $G$, such that $\vphi_{\pi}(g)  = \sum_{i=1}^{n}c_{i}\Gam_{\chi}(h^{i}_{0}gg^{i}_{0})$, where $\Gam_{\chi}$ is the zonal spherical function associated to $\pi$. Therefore up to translation and scaling, we can assume that our $\vphi_{\pi}$ is just the zonal spherical function $\Gam_{\chi}$. Moreover
\begin{align*}
Z(s,f,\Gam_{\chi}) &= \int_{G}f(g)\Gam_{\chi}(g)|\det g|^{s+\frac{n-1}{2}}dg
\\&= f|\det|^{s+\frac{n-1}{2}}*\Gam_{\chi}^{\vee}(e),
\end{align*}
and by the fact that $G$ is unimodular
\begin{align*}
Z(s,f,\Gam_{\chi}) &= \int_{G}f(g^{-1})\Gam_{\chi}(g^{-1})|\det g^{-1}|^{s+\frac{n-1}{2}}dg
\\ &= \Gam_{\chi}^{\vee}*f|\det|^{s+\frac{n-1}{2}}(e).
\end{align*}
Since $\Gam^{\vee}_{\chi}$ is bi-$K$-invariant, we can assume that $f$ is bi-$K$-invariant as well.
It follows that Proposition \ref{gjmain} in the spherical case can be restated as
\begin{align*}
\{ Z(s,f,\Gam_{\chi}) |\quad f\in \CS_{\std}(G,K) \}   =L(s,\pi)  \BC[q^{-s},q^{s}].
\end{align*}

Now we notice that $Z(s,f,\Gam_{\chi}) = \CS(f|\det|^{s+\frac{n-1}{2}})(c)$. If we let $\Gam_{\chi,s}$ be the zonal spherical function associated to $\pi_{s} = \pi |\det|^{s}$, then we have $Z(s,f,\Gam_{\chi}) = Z(0,f,\Gam_{\chi,s})$, and
\begin{align*}
Z(0,f,\Gam_{\chi,s}) = \CS(f|\det|^{\frac{n-1}{2}})(c\cdot q^{-s})  =\CS(f)(c\cdot q^{-s-\frac{n-1}{2}}),
\end{align*}
where $c\cdot q^{-s}$ is the Satake parameter of $\pi_{c,s} = \pi_{c}|\det |^{s}$.

Therefore
\begin{align*}
Z(s,\CH(G,K),\Gam_{\chi}) = \CS(\CH(G,K))(c\cdot q^{-s-\frac{n-1}{2}}) = \BC[\wh{T}/W](c\cdot q^{-s-\frac{n-1}{2}}).
\end{align*}
The space $\BC[\wh{T}/W](c\cdot q^{-s-\frac{n-1}{2}})$ is contained in $\BC[q^{s},q^{-s}]$ naturally.

On the other hand, the space
\begin{align*}
\BC[\wh{T}/W](c\cdot q^{-s-\frac{n-1}{2}}) = \{ Z(s,f,\Gam_{\chi}) |\quad f\in \CH(G,K) \}
\end{align*}
can be identified with
\begin{align*}
\{ Z(s,f,\vphi_{\pi}) |\quad f\in C^{\infty}_{c}(G), \vphi_{\pi}\in \CC(\pi) \}
\end{align*}
using the same argument as the beginning of the proof. Moreover, the space $\{ Z(s,f,\vphi_{\pi}) |\quad f\in C^{\infty}_{c}(G), \vphi_{\pi}\in \CC(\pi) \}$ is a fractional ideal of $\BC[q^{s},q^{-s}]$ containing the constants, it follows that $\{ Z(s,f,\vphi_{\pi}) |\quad f\in C^{\infty}_{c}(G), \vphi_{\pi}\in \CC(\pi) \} = \BC[q^{s}, q^{-s}]$, and we have proved that  $\BC[\wh{T}/W](c\cdot q^{-s-\frac{n-1}{2}}) = \BC[q^{-s},q^{s}]$.
Therefore we get
\begin{align*}
\frac{Z(s,f,\Gam_{\chi})}{L(s,\pi)} \in \BC[\wh{T}/W](c\cdot q^{-s-\frac{n-1}{2}}), \quad f\in \CS_{\std}(G,K).
\end{align*}
Letting $s = \frac{1-n}{2}$, we get
\begin{align*}
\CS(f) \in \CS(1_{\RM_{n}(\CO_{F})})\BC[\wh{T}/W] = \CS(1_{\RM_{n}(\CO_{F})}*\CH(G,K)).
\end{align*}
From this we get $\CS_{\std}(G,K)\subset 1_{\RM_{n}(\CO_{F})}*\CH(G,K)$, and therefore we have proved the equality
\begin{align*}
\CS_{\std}(G,K) = 1_{\RM_{n}(\CO_{F})}*\CH(G,K).
\end{align*}
\end{proof}

\begin{rmk}\label{rmkingredient}
Actually from the proof of Proposition \ref{basicingredient} we find that if a smooth bi-$K$-invariant function $f$  satisfies the condition
\begin{align*}
Z(s,f,\Gam_{\chi})\subset \BC[q^{s},q^{-s}], \quad \textrm{ for any unramified character $\chi$},
\end{align*}
 then the function $f$ lies in $\CH(G,K)$.
\end{rmk}

Theorem \ref{basicingredient} will be our basic ingredient for introducing the space $\CS_{\rho}(G,K)$.

\subsection{Unramified $L$-Factors and the Basic Function}\label{sec:basicfun}
In this section, we review basic results of \emph{basic function}. The main references are \cite{wwlibasic} and \cite{yiannisinverse}.

Let $\pi_{c}$ be the spherical representation of $G$ with Satake parameter $c\in \wh{T}/W$.

First we recall the definition of unramified local $L$-factor.
\begin{defin}
The unramified local $L$-factor attached to $\pi_{c}$ and $\rho$ is defined by
\begin{align*}
L(\pi_{c},\rho,X) = \det(1-\rho(c)X)^{-1},
\end{align*}
which is a rational function in $X$.
\end{defin}

The usual $L$-factors are obtained by specializing $X$, namely
\begin{align*}
L(s,\pi_{c},\rho) = L(\pi_{c},\rho,q^{-s}), s\in \BC.
\end{align*}

Then we recall the following identity.

\begin{lem}\cite[Proposition~43.5]{bumplietheory}
\begin{align*}
L(s,\pi_{c},\rho) = [ \sum_{i=0}^{n} (-1)^{i}\tr (\bigwedge^{i}\rho(c)) q^{-is}]^{-1}
=\sum_{k\geq 0}\tr(\Sym^{k}\rho(c))q^{-ks}.
\end{align*}
\end{lem}

Here we notice that, by assumption $\rho\circ \wh{\sig} $ can be identified with the standard embedding of $\BG_{m}$ into $\GL(V_{\rho})$ via $z\to z\Id$. Moreover, following the assumption in  \cite[Section~3.2]{wwlibasic}, the restriction of $\rho$ to the central torus is $z\to z\Id$, $z\in \BC$. Therefore we find that for all $s\in \BC$,
\begin{align*}
L(\pi_{c}\otimes |\sig|^{s}, \rho,X)
&=\det (1-\rho(c\cdot q^{-s})X)^{-1}
\\ = \det(1-\rho(c\cdot q^{-s}\Id)X)^{-1} &= \det(1-\rho(c)q^{-s}X)^{-1}
\\&= L(\pi_{c},\rho, q^{-s}X).
\end{align*}
Now to define the basic function $1_{\rho,s}$, we want to apply the inverse Satake isomorphism to $L(\pi_{c},\rho,X)$. But $L(\pi_{c},\rho,X)$ is a rational function rather than polynomial on $\wh{T}/W$, hence we need to analyze the support of the inverse Satake transform of $L(\pi_{c},\rho,X)$. Following \cite[Section~3.2]{wwlibasic}, we give an argument showing that the basic function is a formal sum of compactly supported functions on $G$ with disjoint support.

We recall the Kato-Lusztig formula for inverse Satake transform
\begin{thm}[\cite{inversekato}, \cite{inverselusztig}]\label{katolusztig}
For $\lam\in X_{*}(T)_{+}=X^{*}(\wh{T})_{+}$, let $V(\lam)$ be the irreducible representation of $\LG$ of highest weight $\lam$, then
\begin{align*}
\tr V(\lam) = \sum_{\mu\in X_{*}(T)_{+},\mu\leq \lam}q^{-<\rho_{B},\mu>}K_{\lam,\mu}(q^{-1})\CS(1_{K\mu(\vpi)K})
\end{align*}
as an element in $\CH(T,T_{K})^{W}$. Here the function $K_{\lam,\mu}$ is the \emph{Lusztig's $q$-analogue} of Kostant's partition function as mentioned in \cite[Section~2.2]{wwlibasic}.
\end{thm}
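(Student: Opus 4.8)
The plan is to deduce the identity from two classical facts about the Satake transform: Macdonald's explicit formula for $\CS(1_{K\mu(\vpi)K})$, and the expansion of Weyl characters in the Hall--Littlewood basis.

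First I would record Macdonald's formula (see \cite{cartier79}, \cite{grosssatake}). Identifying $\CH(T,T_K)^W$ with the ring $\BC[X^*(\wh{T})]^W$ of $W$-invariant Laurent polynomials on $\wh{T}$, and writing $W_\mu(t)$ for the Poincar\'e polynomial of the stabilizer $\Stab_W(\mu)$, one has
\[
\CS(1_{K\mu(\vpi)K}) \;=\; \frac{q^{\langle\rho_B,\mu\rangle}}{W_\mu(q^{-1})}\sum_{w\in W} w\!\left( e^{\mu}\,\prod_{\alpha>0}\frac{1-q^{-1}e^{-\alpha^\vee}}{1-e^{-\alpha^\vee}}\right),
\]
the product running over the positive coroots $\alpha^\vee$ of $G$, i.e.\ the positive roots of $\LG$. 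Comparing the right-hand side with the definition of the Hall--Littlewood polynomial $P_\mu(\,\cdot\,;t)$ attached to the root datum of $\LG$, this says exactly that
\[
q^{-\langle\rho_B,\mu\rangle}\,\CS(1_{K\mu(\vpi)K}) \;=\; P_\mu(\,\cdot\,;q^{-1}).
\]

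Next I would invoke the standard fact that $\{P_\mu(\,\cdot\,;t)\}_{\mu\in X_*(T)_+}$ is a basis of $\BC(t)[X^*(\wh{T})]^W$ which is unitriangular with respect to the dominance order against the basis $\{\tr V(\lam)\}_\lam$ of Weyl characters; hence there are unique polynomials $c_{\lam,\mu}(t)\in\BC[t]$ (integrality being part of Lusztig's theorem) with $c_{\lam,\lam}=1$ and $\tr V(\lam) = \sum_{\mu\le\lam} c_{\lam,\mu}(t)\,P_\mu(\,\cdot\,;t)$. The crux is the identification $c_{\lam,\mu}(t)=K_{\lam,\mu}(t)$ with Lusztig's $q$-analogue of Kostant's partition function (the Kostka--Foulkes polynomial in type $A$); this is precisely the theorem of Kato \cite{inversekato} and Lusztig \cite{inverselusztig}, and it is the one genuinely deep ingredient --- it is proved either via the combinatorics of intertwining operators in the affine Hecke algebra together with Kazhdan--Lusztig theory, or, in the geometric Satake picture, by recognizing $c_{\lam,\mu}$ as the Poincar\'e polynomial of the stalk of the intersection cohomology sheaf of the affine Grassmannian Schubert variety $\overline{\Gr}^{\lam}$ at a point of the stratum $\Gr^{\mu}$.

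Granting this, I would specialize $t=q^{-1}$ in the Hall--Littlewood expansion and substitute Step~1:
\[
\tr V(\lam) \;=\; \sum_{\mu\le\lam} K_{\lam,\mu}(q^{-1})\,P_\mu(\,\cdot\,;q^{-1}) \;=\; \sum_{\mu\le\lam} q^{-\langle\rho_B,\mu\rangle}\,K_{\lam,\mu}(q^{-1})\,\CS(1_{K\mu(\vpi)K}),
\]
which is the asserted equality in $\CH(T,T_K)^W$. The main obstacle, as indicated, is the Kato--Lusztig identification of the transition coefficients with the combinatorially defined $K_{\lam,\mu}$; since this theorem is classical, in the paper I would merely assemble the above and cite \cite{inversekato} and \cite{inverselusztig}.
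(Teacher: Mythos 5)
Your proposal is correct, and it is essentially the paper's treatment: the paper states this result as a classical theorem and simply cites \cite{inversekato} and \cite{inverselusztig}, which is exactly where your argument places the one deep step (the identification of the Weyl-character/Hall--Littlewood transition coefficients with Lusztig's $q$-analogues). The surrounding reductions you supply --- Macdonald's formula identifying $q^{-\langle\rho_{B},\mu\rangle}\CS(1_{K\mu(\vpi)K})$ with the Hall--Littlewood polynomial $P_{\mu}(\cdot;q^{-1})$ for the dual root system, followed by specialization at $t=q^{-1}$ --- are the standard bookkeeping and are accurate as stated.
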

If we let mult($\Sym^{k}\rho:V(\lam)$) be the multiplicity of $V(\lam)$ in $\Sym^{k}\rho$, then
\begin{align*}
L(\pi_{c},\rho,X) = \sum_{k\geq 0}\sum_{\lam\in X_{*}(T)_{+}}\textrm{mult}(\Sym^{k}\rho:V(\lam))\tr V(\lam)(c)X^{k}.
\end{align*}

By the Kato-Lusztig formula, it equals
\begin{align*}
\sum_{k\geq 0}
\bigg\{ \sum_{\lam,\mu\in X_{*}(T)_{+},\mu\leq \lam} \mathrm{mult}(\Sym^{k}\rho: V(\lam))q^{-<\rho_{B},\mu>}
\\
K_{\lam,\mu}(q^{-1})\CS(1_{K\mu(\vpi)K})(c) \bigg\} X^{k}\\
=\sum_{\mu \in X_{*}(T)_{+}} \bigg\{
\sum_{k\geq 0}\sum_{\lam\in X_{*}(T)_{+}, \lam\geq \mu}
K_{\lam, \mu}(q^{-1})\mathrm{mult}(\Sym^{k}\rho:V(\lam))X^{k}
\bigg\}
\\
q^{-<\rho_{B},\mu>}
\CS(1_{K\mu(\vpi)K})(c).
\end{align*}

Here we observe that each weight $\nu$ of $\Sym^{k}\rho$ satisfies
$\sig(\nu) = k$, where $k$ is identified with the character of $\BG_{m}:z\to z^{k}$.  Thus for each $\mu\in X_{*}(T)_{+}$, the inner sum can be taken over $k=\sig(\mu)$.

For $\mu \in X_{*}(T)_{+}$, we set
\begin{align}\label{newcoeff}
c_{\mu}(q) =
\sum_{\lam\in X_{*}(T)_{+}, \lam\geq \mu}
K_{\lam, \mu}(q^{-1})\mathrm{mult}(\Sym^{k}\rho:V(\lam)),
\end{align}
if $\sig(\mu)\geq 0$, and $0$ otherwise.

We have to justify the rearrangement of sums. Given $\mu$ with $\sig(\mu)= k \geq 0$, the expression (\ref{newcoeff}) is a finite sum over those $\lam$ with
$\sig(\lam)= k$ as explained above, and hence is well-defined.
On the other hand, given $k\geq 0$, there are only finitely many $V(\lam)$ that appear in $\Sym^{k}\rho$. Thus only finitely many $\mu\in X_{*}(T)_{+}$ with
$\sig(\mu) = k$ and $c_{\mu}(q) \neq 0$.
To sum up, we arrive at the following equation in $\BC[[X]]$
\begin{align*}
L(\pi_{c},\rho,X) = \sum_{\mu\in X_{*}(T)_{+}}c_{\mu}(q)q^{-<\rho_{B},\mu>}
\CS(1_{K\mu(\vpi)K})(c)X^{\sig(\mu)}.
\end{align*}

Now we define the function $\vphi_{\rho,X}:T(F)/T_{K}\to \BC[X]$ by
\begin{align*}
\vphi_{\rho,X} = \sum_{\mu\in X_{*}(T)_{+}}c_{\mu}(q)q^{-<\rho_{B},\mu>}
\CS(1_{K\mu(\vpi)K})X^{\sig(\mu)}.
\end{align*}
By previous argument we find that for fixed $k$,
\begin{align*}
 \sum_{\lam,\mu\in X_{*}(T)_{+},\mu\leq \lam} \mathrm{mult}(\Sym^{k}\rho: V(\lam))q^{-<\rho_{B},\mu>}
K_{\lam,\mu}(q^{-1})\CS(1_{K\mu(\vpi)K})(c)X^{k}
\end{align*}
lies in $\CH(T,T_{K})^{W}$.
Hence
$\vphi_{\rho,X}$ is a formal sum of functions in $\CH(T,T_{K})^{W}$.

\begin{defin}
Define the \emph{basic function} $1_{\rho,X}$ as a formal sum of functions, each is supported on
$\{\mu\in X_{*}(T)_{+} |\sig(\mu) = k\}$ for some $k\geq 0$ lying in $\CH(G,K)$ as
\begin{align*}
1_{\rho,X} = \sum_{\mu\in X_{*}(T)_{+}}c_{\mu}(q)q^{-<\rho_{B},\mu>}1_{K\mu(\vpi)K}X^{\sig(\mu)}.
\end{align*}
\end{defin}
One may specialize the variable $X$. Define $1_{\rho,s}$ as the specialization at $X=q^{-s}$. Then
$$1_{\rho,s} = 1_{\rho}|\sig|^{s}.$$

In \cite{wwlibasic}, several analytical properties of $1_{\rho,s}$ has been proved.
By definition, we have $\CS(1_{\rho,X}) = \varphi_{\rho,X}$. Let $c\in \wh{T}/W$ and $\pi_{c}$ be the $K$-unramified irreducible representation with Satake parameter $c$. Let $V_{c}$ denote the underlying $\BC$-vector space of
$\pi_{c}$. Then
\begin{align*}
\varphi_{\rho,X}(c) = L(\pi_{c},\rho,X)
\end{align*}
is a rational function in $c\in \wh{T}/W$.
For $\Re(s)$ sufficiently large with respect to $c$, the operator $\pi_{c}(1_{\rho,s}):V_{c}\to V_{c}$ and its trace are well-defined and
\begin{align*}
\tr(1_{\rho,s}|V_{c}) = L(s,\pi_{c},\rho).
\end{align*}
Moreover, it is shown in \cite{wwlibasic} that the coefficient $c_{\mu}(q)$ is of polynomial growth w.r.t $\mu$, and the integrability of $1_{\rho,s}$ when $\Re(s)$ is sufficiently large has also been demonstrated. We refer the reader to the paper \cite{wwlibasic} for further details.

\subsection{Construction of $\CS_{\rho}(G,K)$ and $\CF_{\rho}$}\label{proofpadic}
In this section, we give a definition of the space $\CS_{\rho}(G,K)$ and construct the spherical component of the operator $\CF_{\rho}$ using the inverse Satake transform.

The definition is motivated from the structure of $\CS_{\std}(G,K)$ as shown in Proposition \ref{basicingredient}.

\begin{defin}
We define the function space $\CS_{\rho}(G,K)$ to be $1_{\rho,-\frac{l}{2}}*\CH(G,K)$.
\end{defin}

By our definition of $\CS_{\rho}(G,K)$, the spherical part of Conjecture \ref{zetaintegral} holds automatically.
Moreover, following the proof of Proposition \ref{basicingredient}, we find that
\begin{align*}
\{ Z(s,f,\Gam_{\chi}) | \quad f\in \CH(G,K) \} = \BC[\wh{T}/W](c\cdot q^{-s-\frac{l}{2}}) = \BC[q^{s},q^{-s}]
\end{align*}
for any spherical representation $\pi_{c}$ with Satake parameter $c\in \wh{T}/W$. From the proof of Proposition \ref{basicingredient}, we realize that $\CS_{\rho}(G,K)$ is the largest subspace of $C^{\infty}(G,K)$ satisfying the spherical part of Conjecture \ref{zetaintegral}.
In other words, the following theorem holds.
\begin{thm}
Let $\pi$ be a spherical representation of $G$.
For every $f\in \CS_{\rho}(G,K)$, $\vphi\in \CC(\pi)$ the integral
\begin{align*}
Z(s,f,\vphi) =\int_{G}f(g)\vphi(g)|\sig(g)|^{s+\frac{l}{2}}dg
\end{align*}
is a rational function in $q^{s}$, and $I_{\pi} = \{Z(s,f,\vphi) |\quad f\in \CS_{\rho}(G,K), \vphi \in \CC(\pi)  \}  = L(s,\pi,\rho) \BC[q^{s},q^{-s}]$.
\end{thm}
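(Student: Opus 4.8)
The plan is to reduce the statement to a computation with the Satake transform, exactly mirroring the proof of Proposition~\ref{basicingredient}. First I would observe that since $f \in \CS_{\rho}(G,K) = 1_{\rho,-\frac{l}{2}}*\CH(G,K)$ is bi-$K$-invariant, the argument at the beginning of the proof of Proposition~\ref{basicingredient} applies verbatim: for any matrix coefficient $\vphi = \pi_{\wt v,v}$ of a spherical $\pi = \pi_c$, one can write $\vphi$ as a finite $\BC$-linear combination of left and right translates of the zonal spherical function $\Gam_\chi$ associated to $\pi$, and since $f$ is bi-$K$-invariant, up to translation and scaling we may replace $\vphi$ by $\Gam_\chi$ itself without changing the span of the resulting zeta integrals. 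Hence it suffices to analyze $Z(s,f,\Gam_\chi)$ for $f \in \CS_{\rho}(G,K)$.

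Next I would unfold the zeta integral as a convolution evaluated at the identity, using unimodularity of $G$:
\begin{align*}
Z(s,f,\Gam_\chi) = \int_G f(g)\Gam_\chi(g)|\sig(g)|^{s+\frac{l}{2}}\,dg = \big(\Gam_\chi^\vee * f|\sig|^{s+\frac{l}{2}}\big)(e),
\end{align*}
and then identify this with the Satake transform evaluated at the Satake parameter. Concretely, writing $f_{s+\frac{l}{2}} = f|\sig|^{s+\frac{l}{2}}$, one has $Z(s,f,\Gam_\chi) = \CS(f_{s+\frac{l}{2}})(c)$, as in Proposition~\ref{basicingredient}, where the scaling identity for $|\sig|^s$-twists from Section~\ref{sec:basicfun} is used to move the $|\sig|^{s+l/2}$ factor inside the Satake parameter. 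For $f = 1_{\rho,-\frac{l}{2}}*h$ with $h \in \CH(G,K)$, this becomes $\CS(1_{\rho,-\frac{l}{2}}*h*|\sig|^{s+\frac{l}{2}})(c)$; using that $1_{\rho,-\frac{l}{2}}*|\sig|^{s+\frac{l}{2}}$ has Satake transform (in the almost-compactly-supported sense of \cite[Proposition~2.3.2]{wwlibasic}) equal to $\CS(1_{\rho,s})(c) = L(s,\pi_c,\rho)$ by the definition of the basic function, we get $Z(s,f,\Gam_\chi) = L(s,\pi_c,\rho)\cdot\CS(h)(c)$.

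Finally, as $h$ ranges over $\CH(G,K)$, the values $\CS(h)(c)$ range over $\BC[\wh T/W](c) = \BC[\wh T/W](c\cdot q^{-s-\frac{l}{2}})$, which by the argument reproduced from Proposition~\ref{basicingredient} (the space of zeta integrals $Z(s,f,\vphi_\pi)$ over $f \in C^\infty_c(G)$, $\vphi_\pi \in \CC(\pi)$ is a fractional ideal of $\BC[q^s,q^{-s}]$ containing the constants, hence equals $\BC[q^s,q^{-s}]$) is precisely $\BC[q^s,q^{-s}]$. Letting $\vphi$ also range over all of $\CC(\pi)$ only rescales by translates, so it does not enlarge the ideal beyond the $\BC[q^s,q^{-s}]$-span; combining, $I_\pi = L(s,\pi,\rho)\BC[q^s,q^{-s}]$, and in particular each $Z(s,f,\vphi)$ is a rational function of $q^s$. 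The one point requiring genuine care — the main obstacle — is justifying the manipulations with the Satake transform on functions that are only almost compactly supported rather than in $\CH(G,K)$: one must check that $\CS$ as extended in \cite[Proposition~2.3.2]{wwlibasic} is still multiplicative for the relevant convolutions and that the formal-sum identity $\CS(1_{\rho,s}) = L(s,\pi_c,\rho)$ can legitimately be multiplied by the polynomial $\CS(h)(c)$ termwise; this is where the grading by $|\sig|$-value (giving disjointness of supports, as noted at the start of Section~\ref{sec:structure}'s parent) is invoked to keep everything well-defined.
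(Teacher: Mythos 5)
Your route is the same as the paper's: reduce to the zonal spherical function $\Gam_{\chi}$ using bi-$K$-invariance of $f$ (general $\vphi\in\CC(\pi)$ contributing only scalar multiples after $K\times K$-averaging), identify $Z(s,f,\Gam_{\chi})$ with $\CS(f_{s+\frac{l}{2}})(c)$, and then feed in the structure $\CS_{\rho}(G,K)=1_{\rho,-\frac{l}{2}}*\CH(G,K)$ together with the identity $\BC[\wh{T}/W](c\cdot q^{-s-\frac{l}{2}})=\BC[q^{s},q^{-s}]$ from the proof of Proposition \ref{basicingredient}; your closing caveat about the extended Satake transform of \cite[Proposition~2.3.2]{wwlibasic} being multiplicative on these almost compactly supported convolutions is also the point the paper implicitly relies on.

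There is, however, one concrete slip in the key computation. Since $|\sig|^{s+\frac{l}{2}}$ is a character, the twist distributes over \emph{both} convolution factors: $(1_{\rho,-\frac{l}{2}}*h)|\sig|^{s+\frac{l}{2}} = 1_{\rho,s}*h_{s+\frac{l}{2}}$, so the correct formula is
\begin{align*}
Z(s,1_{\rho,-\frac{l}{2}}*h,\Gam_{\chi}) = L(s,\pi_{c},\rho)\,\CS(h)\big(c\cdot q^{-s-\frac{l}{2}}\big),
\end{align*}
not $L(s,\pi_{c},\rho)\,\CS(h)(c)$ as you wrote. Taken literally, your version makes the second factor an $s$-independent constant, so the collection of zeta integrals would only be $L(s,\pi,\rho)\cdot\BC$, and the surjectivity onto $L(s,\pi,\rho)\BC[q^{s},q^{-s}]$ would not follow; correspondingly, the interposed equality ``$\BC[\wh{T}/W](c)=\BC[\wh{T}/W](c\cdot q^{-s-\frac{l}{2}})$'' is false, since the left-hand side is just $\BC$. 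Once the twist is applied to $h$ as well, the last step is exactly the one in Proposition \ref{basicingredient}: as $h$ ranges over $\CH(G,K)$, $\CS(h)(c\cdot q^{-s-\frac{l}{2}})$ ranges over $\BC[\wh{T}/W](c\cdot q^{-s-\frac{l}{2}})=\BC[q^{s},q^{-s}]$, giving $I_{\pi}=L(s,\pi,\rho)\BC[q^{s},q^{-s}]$ and, in particular, rationality of each $Z(s,f,\vphi)$ in $q^{s}$. With that correction your argument coincides with the paper's.
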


Using our definition, we can also show the following

\begin{lem}
$\CS_{\rho}(G,K)$ contains $\CH(G,K)$.
\end{lem}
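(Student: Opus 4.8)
The goal is to show $\CH(G,K) \subseteq \CS_{\rho}(G,K) = 1_{\rho,-\frac{l}{2}} * \CH(G,K)$. Since $\CH(G,K)$ is a (commutative, unital) algebra under convolution and $\CS_{\rho}(G,K)$ is by definition the $\CH(G,K)$-module generated by the single element $1_{\rho,-\frac{l}{2}}$, it suffices to exhibit an element $g \in \CH(G,K)$ with $1_{\rho,-\frac{l}{2}} * g = 1_{K} = 1_{G(\CO_F)}$, the identity of the Hecke algebra; then for any $h \in \CH(G,K)$ we get $h = 1_K * h = 1_{\rho,-\frac{l}{2}} * g * h \in 1_{\rho,-\frac{l}{2}} * \CH(G,K)$.

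The plan is to produce such a $g$ via the (extended) Satake isomorphism. Under $\CS$, the basic function $1_{\rho,-\frac{l}{2}}$ maps to $\varphi_{\rho, X}$ specialized at $X = q^{l/2}$, and by the discussion in Section~\ref{sec:basicfun} we have $\CS(1_{\rho,-\frac{l}{2}})(c) = L(\pi_c, \rho, q^{l/2}) = \det(1 - q^{l/2}\rho(c))^{-1}$ as a rational function on $\wh{T}/W$. This is a nowhere-vanishing rational function: its reciprocal $\det(1 - q^{l/2}\rho(c))$ is a regular (polynomial) function on $\wh{T}/W$, i.e.\ it lies in $\BC[\wh{T}/W] = \CS(\CH(G,K))$. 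Therefore I would set
\begin{align*}
g = \CS^{-1}\bigl( \det(1 - q^{l/2}\rho(\cdot)) \bigr) \in \CH(G,K),
\end{align*}
which is a genuine element of the spherical Hecke algebra (not merely of $\CH_{ac}(G,K)$), and then
\begin{align*}
\CS(1_{\rho,-\frac{l}{2}} * g)(c) = \CS(1_{\rho,-\frac{l}{2}})(c)\cdot \CS(g)(c) = \det(1 - q^{l/2}\rho(c))^{-1}\cdot \det(1 - q^{l/2}\rho(c)) = 1
\end{align*}
for all $c \in \wh{T}/W$, so $1_{\rho,-\frac{l}{2}} * g = \CS^{-1}(1) = 1_K$ as desired.

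The one genuinely delicate point is making sense of the convolution product $1_{\rho,-\frac{l}{2}} * g$ and the multiplicativity $\CS(a * b) = \CS(a)\CS(b)$ when $a = 1_{\rho,-\frac{l}{2}}$ is only almost compactly supported rather than compactly supported. This is exactly what the extension of the Satake isomorphism to $\CH_{ac}(G,K)$ in \cite[Proposition~2.3.2]{wwlibasic} provides: $\CH_{ac}(G,K)$ is a module over $\CH(G,K)$, the extended $\CS$ is $\CH(G,K)$-linear and multiplicative in the appropriate sense, and it is injective, so the identity $\CS(1_{\rho,-\frac{l}{2}}*g) = 1 = \CS(1_K)$ forces $1_{\rho,-\frac{l}{2}}*g = 1_K$. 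Once this identity is in hand the module argument of the first paragraph finishes the proof; no further estimates are needed.
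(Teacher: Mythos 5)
Your proof is correct and follows essentially the same route as the paper: the paper also argues through the (extended) Satake isomorphism, observing that $\CS(\CS_{\rho}(G,K)) = L(-\tfrac{l}{2},\pi_{c},\rho)\BC[\wh{T}/W]$ contains $\BC[\wh{T}/W]$ precisely because $\det(1-q^{l/2}\rho(c))$ is a regular $W$-invariant function, which is the divisibility fact you make explicit by exhibiting $g=\CS^{-1}\bigl(\det(1-q^{l/2}\rho(\cdot))\bigr)$ with $1_{\rho,-\frac{l}{2}}*g=1_{K}$. Your version merely spells out the inverse element and the module argument that the paper leaves implicit.
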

\begin{proof}
By Satake isomorphism, the space $\CS(1_{\rho,-\frac{l}{2}}*\CH(G,K))$ as rational functions on $c\in \wh{T}/W$ is equal to $L(-\frac{l}{2},\pi_{c},\rho)\BC[\wh{T}/W]$, which contains $\BC[\wh{T}/W]$. Applying inverse Satake transform and we get the lemma.
\end{proof}

Then we give our definition of the spherical component of the kernel $\Phi_{\psi,\rho}^{K}$.
Before that we show how to derive the relation between $\gam(s,\pi,\rho,\psi)$ and $\Phi_{\psi, \rho}$ from the conjectural functional equation
\begin{align*}
Z(1-s,\CF_{\rho}(f), \vphi^{\vee}) =\gam(s,\pi,\rho,\psi)Z(s,f,\vphi),
\end{align*}
where
\begin{align*}
Z(s,f,\vphi) =\int_{G}f(g)\vphi(g)|\sig (g)|^{s+\frac{l}{2}}dg
\end{align*}
and $\vphi(g) = <\wt{v}, \pi(g)v>$ lies in $\CC(\pi)$.

Since the analytical property of $\Phi_{\psi,\rho}$ is still conjectural, the proof of the following lemma is purely formal. But later when restricting to the spherical component, we can make it to be rigorous.

\begin{lem}\label{distributiongamma}
For any irreducible admissible representation $\pi$ of $G$
\begin{align*}
\pi(\Phi_{\psi,\rho,s}) = \gam(-s-\frac{l}{2},\pi^{\vee},\rho,\psi)\Id.
\end{align*}
\end{lem}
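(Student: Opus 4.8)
The plan is to extract the scalar $\pi(\Phi_{\psi,\rho,s})$ purely formally from the conjectural functional equation, reducing it to the Godement--Jacquet $\gamma$-factor via Schur's lemma. First I would recall that by definition $\CF_{\rho}(f) = |\sig|^{-l-1}(\Phi_{\psi,\rho}*f^{\vee})$, and that after the unramified twist the relevant distribution is $\Phi_{\psi,\rho,s}$. Since $\pi$ is irreducible and admissible and $\Phi_{\psi,\rho,s}$ is a $G$-stable (i.e.\ $\Ad(G)$-invariant, or equivalently bi-invariant under the relevant action) distribution, its action on the space $V$ of $\pi$, when defined, commutes with $\pi(g)$ for all $g\in G$; hence by Schur's lemma $\pi(\Phi_{\psi,\rho,s}) = \lambda_{\pi}(s)\,\Id$ for some scalar $\lambda_{\pi}(s)\in\BC$ (a priori a rational function of $q^{s}$ after the twist). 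The task is to identify $\lambda_{\pi}(s)$.

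Next I would plug the definition of $\CF_{\rho}$ into the functional equation
\begin{align*}
Z(1-s,\CF_{\rho}(f),\vphi^{\vee}) = \gam(s,\pi,\rho,\psi)\,Z(s,f,\vphi), \quad f\in\CS_{\rho}(G), \vphi\in\CC(\pi),
\end{align*}
and unwind the left-hand side. Writing $\CF_{\rho}(f) = |\sig|^{-l-1}(\Phi_{\psi,\rho}*f^{\vee})$ and using that the zeta integral is, up to the twist $|\sig|^{\bullet}$, essentially a convolution evaluated against a matrix coefficient, the convolution by $\Phi_{\psi,\rho}$ passes through the matrix-coefficient pairing and, by the Schur scalar above (applied to $\pi^{\vee}$ with the appropriate shift of $s$), produces the factor $\lambda_{\pi^{\vee}}$ evaluated at the shifted argument, times $Z(s,f,\vphi)$. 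Comparing with the right-hand side $\gam(s,\pi,\rho,\psi)Z(s,f,\vphi)$ and cancelling $Z(s,f,\vphi)$ (which is not identically zero, e.g.\ by Conjecture \ref{zetaintegral}(3), or formally), I would read off the identity $\lambda_{\pi^{\vee}}$ at the shifted point $= \gam(s,\pi,\rho,\psi)$. Keeping careful track of the normalization --- the extra $|\sig|^{-l-1}$, the passage $s\mapsto 1-s$ in the left zeta integral, and the convention $\vphi\mapsto\vphi^{\vee}$ which swaps $\pi$ with $\pi^{\vee}$ --- the argument at which $\lambda_{\pi^{\vee}}$ is evaluated works out to $-s-\tfrac{l}{2}$, yielding
\begin{align*}
\pi(\Phi_{\psi,\rho,s}) = \gam\!\left(-s-\tfrac{l}{2},\pi^{\vee},\rho,\psi\right)\Id,
\end{align*}
as claimed. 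This bookkeeping of shifts and the contragredient is exactly where the stated normalization of $\Phi_{\psi,\rho,s}$ in the earlier remark gets used.

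The main obstacle is not conceptual but one of rigor and normalization. Since the analytic properties of $\Phi_{\psi,\rho}$ are only conjectural, none of the convolutions above are literally justified as operators on $C^{\infty}(G)$, so the derivation must be flagged as formal --- which the statement preceding the lemma already does ("the proof of the following lemma is purely formal"). The genuinely delicate point is getting the constants right: pinning down precisely that the scalar is $\gam$ evaluated at $-s-\frac{l}{2}$ with $\pi^{\vee}$ (rather than, say, $\gam(s,\pi,\rho,\psi)$ or some other shift), which requires matching the $|\sig|^{s+l/2}$ weight in the definition of $Z$, the $|\sig|^{-l-1}$ in $\CF_{\rho}$, and the $1-s$ in the functional equation. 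I would therefore organize the computation so that each normalization choice is introduced explicitly, deferring the rigorous version to the spherical setting (where, as the text notes, the Satake transform makes everything honest) in the later sections.
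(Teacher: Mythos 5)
Your proposal follows essentially the same route as the paper: substitute $\CF_{\rho}(f) = |\sig|^{-l-1}(\Phi_{\psi,\rho}*f^{\vee})$ into the conjectural functional equation, invoke $G$-stability together with Schur's lemma to see that $(\pi^{\vee})_{-s-\frac{l}{2}}(\Phi_{\psi,\rho})$ acts by a scalar, and identify that scalar by cancelling the zeta integral, using the identity $Z(-s-l,f^{\vee},\vphi^{\vee}) = Z(s,f,\vphi)$ (which your "bookkeeping" step implicitly contains) to land on $\gam(-s-\frac{l}{2},\pi^{\vee},\rho,\psi)$. Your framing of the argument as purely formal matches the paper's own caveat, so the proposal is correct and essentially coincides with the paper's proof.
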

\begin{proof}
As conjectured in \cite{BK00}, the function $\CF_{\rho}(f)$ is defined to be
\begin{align*}
|\sig|^{-l-1}(\Phi_{\psi,\rho}*f^{\vee}).
\end{align*}
We plug the formula into the functional equation, and get
\begin{align}\label{1functional}
<\wt{v},Z(1-s,|\sig|^{-l-1}(\Phi_{\psi,\rho}*f^{\vee}),\pi^{\vee})v>
\\=
\gam(s,\pi,\rho,\psi)<\wt{v},Z(s,f,\pi)v>.
\end{align}
Here $Z(s,f,\pi)$ is defined to be the operator $\int_{G}f(g)\pi(g)|\sig (g)|^{\frac{l}{2}}dg$ whenever $\Re(s)$ is sufficiently large.
For the left hand side of (\ref{1functional}), we can further simplify it to be
\begin{align*}
Z(1-s,|\sig|^{-l-1}(\Phi_{\psi,\rho}*f^{\vee}),\pi^{\vee}) &= Z(-s-l,\Phi_{\psi,\rho}*f^{\vee},\pi^{\vee})
\\&=
(\pi^{\vee})_{-s-\frac{l}{2}}(\Phi_{\psi,\rho})
(\pi^{\vee})_{-s-\frac{l}{2}}(f^{\vee}).
\end{align*}
Then the conjectural identity can be simplified to be
\begin{align*}
<\wt{v},(\pi^{\vee})_{-s-\frac{l}{2}}(\Phi_{\psi,\rho})
(\pi^{\vee})_{-s-\frac{l}{2}}(f^{\vee})v> = \gam(s,\pi,\rho,\psi) <\wt{v},\pi_{s+\frac{l}{2}}(f)v>.
\end{align*}

Now by assumption, $\Phi_{\psi,\rho}$ is a $G$-stable distribution, therefore it should be conjugation-invariant. Then by Schur's lemma the operator $(\pi^{\vee})_{-s-\frac{l}{2}}(\Phi_{\psi,\rho})$ should act as a scalar $c(s)$. Hence the identity can be further simplified as
\begin{align*}
c(s)<\wt{v},(\pi^{\vee})_{-s-\frac{l}{2}}(f^{\vee})v> = \gam(s,\pi,\rho,\psi)<\wt{v},\pi_{s+\frac{l}{2}}(f)v>.
\end{align*}
Now we arrive at the equality
\begin{align*}
c(s)Z(-s-l , f^{\vee},\vphi^{\vee}) = \gam(s,\pi,\rho,\psi) Z(s,f,\vphi).
\end{align*}
Using the identity $Z(-s-l, f^{\vee}, \vphi^{\vee})  = Z(s,f,\vphi)$, we get
\begin{align*}
c(s) = \gam(s,\pi,\rho,\psi).
\end{align*}
In other words, we obtain
\begin{align*}
(\pi^{\vee})_{-s-\frac{l}{2}}(\Phi_{\psi,\rho}) = \gam(s,\pi,\rho,\psi)\Id,
\end{align*}
which is equivalent to the desired relation
\begin{align*}
\pi (\Phi_{\psi,\rho,s})  = \gam(-s-\frac{l}{2},\pi^{\vee},\rho,\psi) \Id.
\end{align*}
\end{proof}

Now we restrict our representation $\pi$ to be a spherical representation.

By the definition of $\gam$-factor in spherical case,
we know that
\begin{align*}
\gam(s,\pi,\rho,\psi) =\veps(s,\pi,\rho,\psi)\frac{L(1-s,\pi^{\vee},\rho)}{L(s,\pi,\rho)}.
\end{align*}
Since we assume that $\psi$ is self-dual, which means that $\psi$ has level $0$. By the computations in \cite{gjzeta} we know that $\veps(s,\pi,\rho,\psi) = 1$ when $\rho$ is the standard representation of $\GL(n)$. In order to be consistent with the functoriality for general $\rho$, which means that $\veps(s,\pi,\rho,\psi) = \veps(s,\rho(\pi),\psi)$, where $\rho(\pi)$ is the functorial lifting of $\pi$ along $\rho$, we can just let $\veps(s,\pi,\rho,\psi) = 1$ for general $\rho$ whenever $\psi$ is of level $0$.

Therefore $\gam(s,\pi,\rho,\psi)$ can be simplified as
\begin{align*}
\gam(s,\pi,\rho,\psi) =\frac{L(1-s,\pi^{\vee},\rho)}{L(s,\pi,\rho)}.
\end{align*}
If we assume that the spherical representation $\pi$ has Satake parameter $c\in \wh{T}/W$, then $\pi^{\vee}$ has Satake parameter $c^{-1}\in \wh{T}/W$. For convenience, we write $\pi_{c}$ to mean that the spherical representation has Satake parameter $c\in \wh{T}/W$.

Using the definition of unramified $L$-factor, we find that
\begin{align*}
L(s,\pi,\rho) &= \det(1-\rho(c)q^{-s})^{-1},
\\
 L(1-s,\pi^{\vee},\rho) &=\det(1-\rho(c^{-1})q^{1-s})^{-1}.
\end{align*}

On the other hand, we know that
\begin{align*}
\det(1-\rho(c^{-1})q^{1-s}) = \det(1-\rho^{\vee}(c)q^{1-s}),
\end{align*}
where $\rho^{\vee}$ is the contragredient of $\rho$.

It follows that $\gam(s,\pi,\rho,\psi)$ can be further simplified to be
\begin{align*}
\gam(s,\pi,\rho,\psi) =\frac{L(1-s,\pi,\rho^{\vee})}{L(s,\pi,\rho)}.
\end{align*}

Now by previous discussion, we know that
\begin{align*}
\pi(\Phi_{\psi,\rho,s}) = \gam(-s-\frac{l}{2},\pi^{\vee},\rho,\psi)\Id.
\end{align*}

Using the inverse Satake isomorphism, we get the spherical component of the distribution $\Phi_{\psi,\rho,s}$, which we denote by $\Phi_{\psi,\rho,s}^{K}$,
\begin{align*}
\Phi_{\psi,\rho,s}^{K}&= \CS^{-1}(\gam(-s-\frac{l}{2},\pi^{\vee},\rho,\psi))
\\
&=\CS^{-1}(L(1+s+\frac{l}{2},\pi^{\vee},\rho^{\vee})) *\CS^{-1} (\frac{1}{L(-s-\frac{l}{2},\pi^{\vee},\rho)}).
\end{align*}
Since $L(1+s+\frac{l}{2},\pi^{\vee},\rho^{\vee}) = L(1+s+\frac{l}{2},\pi,(\rho^{\vee})^{\vee}) = L(1+s+\frac{l}{2},\pi,\rho)$,
and $L(-s-\frac{l}{2},\pi^{\vee},\rho) = L(-s-\frac{l}{2},\pi,\rho^{\vee})$, we get
\begin{align*}
\Phi^{K}_{\psi,\rho,s}&=\CS^{-1}(L(1+s+\frac{l}{2},\pi,\rho))*\CS^{-1}(\frac{1}{L(-s-\frac{l}{2},\pi,\rho^{\vee})})\\
&=1_{\rho,1+s+\frac{l}{2}}*\CS^{-1}(\frac{1}{L(-s-\frac{l}{2},\pi,\rho^{\vee})}).
\end{align*}

\begin{rmk}\label{realvalued}
We notice that for a fixed $s\in \BC$, as a function in Satake parameter $c\in \wh{T}/W$,
$\frac{1}{L(-s-\frac{l}{2},\pi,\rho^{\vee})}=\frac{1}{L(-s-\frac{l}{2},\pi_{c},\rho^{\vee})}$ lies in $\BC[\wh{T}/W]$, therefore $\CS^{-1}(\frac{1}{L(-s-\frac{l}{2},\pi,\rho^{\vee})})$ lies in
$\CH(G,K)$. We also notice that the spectral property of $\Phi_{\psi,\rho,s}^{K}$ is really determined by the basic function $1_{\rho,s}$. On the other hand, we find that when writing the function $\Phi^{K}_{\psi,\rho}$ as expansion via basis $\{ 1_{K\lam K} \}_{\lam \in X_{*}(T)_{+}}$, all its coefficients are real numbers, from which we deduce that the complex conjugate of $\Phi^{K}_{\psi,\rho}$, which we denote as
$\wb{\Phi^{K}_{\psi,\rho}}$ is equal to  $\Phi^{K}_{\psi,\rho}$. This will be useful for proving Proposition \ref{unitaryprop}.
\end{rmk}

By construction, our definition of $\Phi_{\psi,\rho}^{K}$ does give us the functional equation
\begin{align*}
Z(1-s, \CF_{\rho}(f),\vphi) = \gam(s,\pi,\rho,\psi)Z(s,f,\vphi) , \quad f\in \CS_{\rho}(G,K),
\end{align*}
where $\CF_{\rho}(f)  = |\sig|^{-l-1}(\Phi_{\psi,\rho}^{K}*f^{\vee})$.

\begin{pro}\label{fixbasic}
Conjecture \ref{fixed} holds, i.e. $\CF_{\rho}$ sends basic function $1_{\rho,-\frac{l}{2}}$ to $1_{\rho,-\frac{l}{2}}$.
\end{pro}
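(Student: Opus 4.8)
The plan is to compute the Satake transforms of $\CF_{\rho}(1_{\rho,-\frac{l}{2}})$ and of $1_{\rho,-\frac{l}{2}}$ and to check that they coincide as rational functions on $\wh{T}/W$. Since the Satake isomorphism extends to an \emph{injective} map on the space of almost compactly supported bi-$K$-invariant functions by \cite[Proposition~2.3.2]{wwlibasic}, and both $1_{\rho,-\frac{l}{2}}$ and $\CF_{\rho}(1_{\rho,-\frac{l}{2}})$ are almost compactly supported, this equality of Satake transforms will imply the proposition. The computation rests on three compatibilities of the Satake transform, all of them already implicit in the proof of Proposition~\ref{basicingredient}: it turns convolution into multiplication; it sends $f\mapsto f^{\vee}$ to $c\mapsto c^{-1}$; and it sends $f\mapsto f|\sig|^{a}$ to the argument shift $c\mapsto c\cdot q^{-a}$, where $c\cdot q^{t}$ denotes the twist of $c\in\wh{T}/W$ by $\wh{\sig}(q^{t})$.

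First, by the defining property of the basic function one has $\CS(1_{\rho,-\frac{l}{2}})(c)=L(-\frac{l}{2},\pi_{c},\rho)=\det(1-\rho(c)q^{l/2})^{-1}$. For the other side I would write $\CF_{\rho}(1_{\rho,-\frac{l}{2}})=|\sig|^{-l-1}\bigl(\Phi^{K}_{\psi,\rho}*(1_{\rho,-\frac{l}{2}})^{\vee}\bigr)$ and apply the three rules to obtain
\[
\CS\bigl(\CF_{\rho}(1_{\rho,-\frac{l}{2}})\bigr)(c)=\CS(\Phi^{K}_{\psi,\rho})(c\cdot q^{l+1})\cdot\CS(1_{\rho,-\frac{l}{2}})(c^{-1}\cdot q^{-l-1}).
\]
Here $\CS(\Phi^{K}_{\psi,\rho})(d)=\gam(-\frac{l}{2},\pi_{d}^{\vee},\rho,\psi)=\frac{L(1+\frac{l}{2},\pi_{d},\rho)}{L(-\frac{l}{2},\pi_{d},\rho^{\vee})}$ by Lemma~\ref{distributiongamma} together with the spherical simplification of the $\gam$-factor carried out above (equivalently this is forced by Definition~\ref{fundef} at $s=0$). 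Now one uses the normalization that $\rho\circ\wh{\sig}$ is the scalar embedding $z\mapsto z\,\Id$, so that $\rho(c\cdot q^{t})=q^{t}\rho(c)$ and $\rho^{\vee}(c\cdot q^{t})=q^{-t}\rho^{\vee}(c)$; hence every $L$-factor above can be rewritten as an $L$-factor at $c$ after an explicit shift of the spectral variable. A short bookkeeping gives $\CS(\Phi^{K}_{\psi,\rho})(c\cdot q^{l+1})=L(-\frac{l}{2},\pi_{c},\rho)/L(1+\frac{l}{2},\pi_{c},\rho^{\vee})$ and $\CS(1_{\rho,-\frac{l}{2}})(c^{-1}\cdot q^{-l-1})=L(1+\frac{l}{2},\pi_{c},\rho^{\vee})$, the latter also using $\det(1-\rho(c)^{-1}X)=\det(1-\rho^{\vee}(c)X)$. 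Multiplying, the two copies of $L(1+\frac{l}{2},\pi_{c},\rho^{\vee})$ cancel and one is left with $L(-\frac{l}{2},\pi_{c},\rho)=\CS(1_{\rho,-\frac{l}{2}})(c)$, which is exactly the identity needed.

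The point that will require care — and the main obstacle — is that $1_{\rho,-\frac{l}{2}}$ and $\Phi^{K}_{\psi,\rho}$ lie in $\CH_{ac}(G,K)$ but not in $\CH(G,K)$, so the convolution $\Phi^{K}_{\psi,\rho}*(1_{\rho,-\frac{l}{2}})^{\vee}$ and the three compatibility rules above must be interpreted in the extended framework of \cite{wwlibasic}. This is legitimate because, as recalled in Section~\ref{sec:basicfun}, every function involved is a formal sum of genuine compactly supported functions with pairwise disjoint supports, graded by the value of $|\sig|$ in $q^{\BZ}$; the convolution respects this grading and is a \emph{finite} sum in each graded piece, so the computation is carried out degree by degree and the Satake transform behaves as claimed. (The identification of $\CS(\Phi^{K}_{\psi,\rho})$ also uses $\veps(s,\pi,\rho,\psi)=1$ for our level-zero $\psi$, as explained before the statement.) A quicker alternative, avoiding the direct computation of $\CS(\CF_{\rho}(1_{\rho,-\frac{l}{2}}))$, is to note that $1_{\rho,-\frac{l}{2}}\in\CS_{\rho}(G,K)$ and to feed $f=1_{\rho,-\frac{l}{2}}$ and $\vphi=\Gam_{\chi}$, the zonal spherical function of $\pi_{c}$, into the functional equation $Z(1-s,\CF_{\rho}(f),\vphi^{\vee})=\gam(s,\pi,\rho,\psi)Z(s,f,\vphi)$ established above: the right-hand side is $\gam(s,\pi_{c},\rho,\psi)\,L(s,\pi_{c},\rho)=L(1-s,\pi_{c},\rho^{\vee})$, the left-hand side equals $\CS\bigl(\CF_{\rho}(1_{\rho,-\frac{l}{2}})\bigr)(c^{-1}\cdot q^{s-1-l/2})$, and matching the two as $(c,s)$ vary again forces $\CS(\CF_{\rho}(1_{\rho,-\frac{l}{2}}))=\CS(1_{\rho,-\frac{l}{2}})$, whence the result.
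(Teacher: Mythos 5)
Your proof is correct and follows essentially the same route as the paper: both compare the Satake transforms of $\CF_{\rho}(1_{\rho,-\frac{l}{2}})$ and $1_{\rho,-\frac{l}{2}}$, using $\CS(\Phi^{K}_{\psi,\rho})(c)=\frac{L(1+\frac{l}{2},\pi_{c},\rho)}{L(-\frac{l}{2},\pi_{c},\rho^{\vee})}$, multiplicativity under convolution, and $L(s,\pi^{\vee},\rho)=L(s,\pi,\rho^{\vee})$. The only cosmetic difference is that the paper multiplies both sides by $|\sig|^{l+1}$ and compares with $\CS(1_{\rho,1+\frac{l}{2}})$, whereas you carry the twist through as the argument shift $c\mapsto c\cdot q^{l+1}$; your explicit remark about interpreting the convolution in $\CH_{ac}(G,K)$ degree by degree is a welcome precision the paper leaves implicit.
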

\begin{proof}
By definition
\begin{align*}
\CF_{\rho}(1_{\rho,-\frac{l}{2}})(g) = |\sig (g)|^{-l-1}(\Phi_{\psi,\rho}^{K}*1_{\rho,-\frac{l}{2}}^{\vee})(g)
=|\sig(g)|^{-l-1}(\Phi_{\psi,\rho}^{K}*(1^{\vee}_{\rho})_{\frac{l}{2}})(g).
\end{align*}
Applying the Satake isomorphism to the function $\Phi_{\psi,\rho}^{K}*(1_{\rho}^{\vee})_{\frac{l}{2}}$, one gets that as rational function on $\wh{T}/W$
\begin{align*}
\CS(\Phi_{\psi,\rho}^{K}*(1_{\rho}^{\vee})_{\frac{l}{2}})(c) = \CS(\Phi_{\psi,\rho}^{K})(c)\CS((1^{\vee}_{\rho})_{\frac{l}{2}})(c)
=\frac{L(1+\frac{l}{2},\pi,\rho)}{L(-\frac{l}{2},\pi,\rho^{\vee})}\CS((1^{\vee}_{\rho})_{\frac{l}{2}})(c).
\end{align*}
Here we notice that if $\vphi_{\pi}$ is the zonal spherical function of $\pi$, then $\vphi(g^{-1})$ is exactly the zonal spherical function of $\pi^{\vee}$, so we get $\CS(1_{\rho}^{\vee})(c) = L(0,\pi_{c}^{\vee},\rho)$.
Hence
\begin{align*}
\CS((1^{\vee}_{\rho})_{\frac{l}{2}})(c) = L(-\frac{l}{2},\pi^{\vee},\rho) = L(-\frac{l}{2},\pi,\rho^{\vee}).
\end{align*}
Therefore
\begin{align*}
\CS(|\sig|^{l+1}\CF_{\rho}(1_{\rho,-\frac{l}{2}}))&=
\CS(\Phi^{K}_{\psi,\rho}*(1^{\vee}_{\rho})_{\frac{l}{2}})
=\frac{L(1+\frac{l}{2},\pi,\rho)}{L(-\frac{l}{2},\pi,\rho^{\vee})}L(-\frac{l}{2},\pi,\rho^{\vee})
\\&=
L(1+\frac{l}{2},\pi,\rho) = \CS(1_{\rho,1+\frac{l}{2}})=\CS(1_{\rho,-\frac{l}{2}} |\sig|^{l+1}).
\end{align*}
Using the inverse Satake isomorphism, it follows that $\CF_{\rho}(1_{\rho,-\frac{l}{2}}) = 1_{\rho,-\frac{l}{2}}$.
\end{proof}

Finally we are going to verify the spherical part of Conjecture \ref{operator}.
\begin{pro}
$\CF_{\rho}$ preserves the space $\CS_{\rho}(G,K)$.
\end{pro}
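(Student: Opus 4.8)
The strategy is to compute the image of $\CF_\rho(\CS_\rho(G,K))$ under the Satake transform and show it equals the image of $\CS_\rho(G,K)$, namely $L(-\tfrac{l}{2},\pi_c,\rho)\,\BC[\wh T/W]$ as a space of rational functions in $c\in\wh T/W$. Since the Satake isomorphism (extended to almost compactly supported functions as in \cite[Proposition~2.3.2]{wwlibasic}) is injective, this equality of images suffices. First I would take an arbitrary $f\in\CS_\rho(G,K)$, write $f=1_{\rho,-\frac{l}{2}}*h$ with $h\in\CH(G,K)$, and expand $\CF_\rho(f) = |\sig|^{-l-1}(\Phi^K_{\psi,\rho}*f^\vee)$. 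The key computational input is that, as rational functions on $\wh T/W$,
\begin{align*}
\CS(\Phi^K_{\psi,\rho}) = \frac{L(1+\tfrac{l}{2},\pi_c,\rho)}{L(-\tfrac{l}{2},\pi_c,\rho^\vee)},
\end{align*}
which is exactly what was established (at general $s$) in the construction of $\Phi^K_{\psi,\rho,s}$ preceding Remark~\ref{realvalued}, specialized to $s=-\tfrac{l}{2}$.

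Next I would handle the interaction of Satake transform with the operations $f\mapsto f^\vee$ and $f\mapsto |\sig|^{l+1}f$. For the twist by $|\sig|^{l+1}$: as in the proof of Proposition~\ref{fixbasic}, multiplying by $|\sig|^{\,a}$ on the group side corresponds on the spectral side to evaluating the Satake transform at $c\cdot q^{-a}$ (equivalently, it shifts the parameter in the $L$-factors), so $\CS(|\sig|^{-l-1}F)$ is a rescaling of $\CS(F)$ and does not change whether the image lies in $L(-\tfrac{l}{2},\pi_c,\rho)\BC[\wh T/W]$. For the contragredient: if $\vphi_\pi$ is the zonal spherical function of $\pi_c$ then $\vphi_\pi^\vee$ is that of $\pi_c^\vee=\pi_{c^{-1}}$, so $\CS(f^\vee)(c) = \CS(f)(c^{-1})$; in particular $\CS(1_{\rho,-\frac{l}{2}}^\vee)(c) = L(-\tfrac{l}{2},\pi_c^\vee,\rho) = L(-\tfrac{l}{2},\pi_c,\rho^\vee)$, exactly as computed in Proposition~\ref{fixbasic}. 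Putting these together, for $f = 1_{\rho,-\frac{l}{2}}*h$ one gets
\begin{align*}
\CS(|\sig|^{l+1}\CF_\rho(f))(c) = \CS(\Phi^K_{\psi,\rho})(c)\cdot \CS(1_{\rho,-\frac{l}{2}}^\vee)(c)\cdot \CS(h^\vee)(c) = L(1+\tfrac{l}{2},\pi_c,\rho)\cdot \CS(h^\vee)(c),
\end{align*}
and then untwisting by $|\sig|^{l+1}$ converts the factor $L(1+\tfrac{l}{2},\pi_c,\rho)$ back into $L(-\tfrac{l}{2},\pi_c,\rho)$ (up to the parameter rescaling), so $\CS(\CF_\rho(f))$ lies in $L(-\tfrac{l}{2},\pi_c,\rho)\,\BC[\wh T/W]$. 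Since $\CS(h^\vee)$ ranges over all of $\BC[\wh T/W]$ as $h$ ranges over $\CH(G,K)$, this shows $\CS(\CF_\rho(\CS_\rho(G,K))) = L(-\tfrac{l}{2},\pi_c,\rho)\BC[\wh T/W] = \CS(\CS_\rho(G,K))$, and applying the inverse Satake transform gives $\CF_\rho(\CS_\rho(G,K)) = \CS_\rho(G,K)$.

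The main obstacle I anticipate is purely bookkeeping rather than conceptual: one must be careful that all of $\Phi^K_{\psi,\rho}$, $f^\vee$, and their convolution are only \emph{almost} compactly supported, so that the relevant Satake transforms live in the completed ring of \cite[Proposition~2.3.2]{wwlibasic} and the multiplicativity $\CS(F_1*F_2) = \CS(F_1)\CS(F_2)$ continues to hold there; this is where one invokes that $\CS^{-1}(1/L(-s-\tfrac{l}{2},\pi,\rho^\vee))\in\CH(G,K)$ (Remark~\ref{realvalued}) so that the non-compactly-supported factor $1_{\rho,1+s+\frac{l}{2}}$ is the only source of infinite support, and its support is controlled level-by-level by $|\sig|$. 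One should also double-check the exponent arithmetic in the twist by $|\sig|^{-l-1}$ together with the shift already built into the definition of $\Phi^K_{\psi,\rho,s}$, so that the net effect genuinely returns the factor $L(-\tfrac{l}{2},\pi_c,\rho)$ and not some other shift; this mirrors exactly the bookkeeping in the proof of Proposition~\ref{fixbasic}, so it can be carried out by the same method.
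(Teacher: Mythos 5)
Your argument is correct and is essentially the paper's own proof: decompose $f=1_{\rho,-\frac{l}{2}}*h$ with $h\in\CH(G,K)$, use the identity $\Phi^{K}_{\psi,\rho}*1_{\rho,-\frac{l}{2}}^{\vee}=1_{\rho,1+\frac{l}{2}}$ (i.e.\ the cancellation of $L(-\frac{l}{2},\pi_{c},\rho^{\vee})$ on the Satake side, exactly as in Proposition \ref{fixbasic}), and then untwist by $|\sig|^{-l-1}$. The only differences are cosmetic — you carry the computation entirely on the spectral side and additionally record that the image is all of $\CS_{\rho}(G,K)$, whereas the paper argues by convolution and only needs the containment — so no further comment is required.
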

\begin{proof}
To show that $\CF_{\rho}$ preserves the space $\CS_{\rho}(G,K)$,
we only need to show that for any $f\in \CH(G,K)$, as a rational function on $\wh{T}/W$
\begin{align*}
\frac{\CS(\CF_{\rho}(1_{\rho,-\frac{l}{2}}*f))}{L(-\frac{l}{2},\pi_{c},\rho)}
\end{align*}
lies in $\BC[\wh{T}/W]$.

By definition,
\begin{align*}
\CF_{\rho}(1_{\rho,-\frac{l}{2}}*f) = |\sig|^{-l-1}(\Phi_{\psi,\rho}^{K}*((1_{\rho,-\frac{l}{2}}*f)^{\vee}))
=|\sig|^{-l-1} \Phi_{\psi,\rho}^{K}*f^{\vee}*1_{\rho,-\frac{l}{2}}^{\vee}.
\end{align*}
Since $\CH(G,K)$ is commutative, and functions in $\CH(G,K)$ also commute with $1_{\rho,s}$, we get
\begin{align*}
 \Phi_{\psi,\rho}^{K}*f^{\vee}*1_{\rho,-\frac{l}{2}}^{\vee} = \Phi_{\psi,\rho}^{K}*1_{\rho,-\frac{l}{2}}^{\vee}*f^{\vee}.
\end{align*}
As shown in the proof of Proposition \ref{fixbasic}, we know that $\Phi_{\psi,\rho}^{K}*1^{\vee}_{\rho,-\frac{l}{2}} = 1_{\rho,1+\frac{l}{2}}$. Therefore we only need to show
\begin{align*}
|\sig|^{-l-1}(1_{\rho,1+\frac{l}{2}}*f^{\vee})\in \CS_{\rho}(G,K),
\end{align*}
which, after applying the Satake isomorphism, is equivalent to showing that
\begin{align*}
\CS(1_{\rho,1+\frac{l}{2}}*f^{\vee}) \subset L(1+\frac{l}{2},\pi_{c},\rho)\BC[\wh{T}/W].
\end{align*}
But this follows from the definition.
\end{proof}

\begin{pro}\label{unitaryprop}
$\CF_{\rho}$ extends to a unitary operator on the space $L^{2}(G,K,|\sig|^{l+1}dg)$.
\end{pro}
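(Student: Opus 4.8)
The plan is to reduce the isometry of $\CF_\rho$ to a single algebraic identity for the $\gam$-factor on the Satake side, in the spirit of the proof of Proposition~\ref{basicingredient}. First, since $|\sig|$ is locally constant and nowhere vanishing, $\CH(G,K)=C^\infty_c(G,K)$ is dense in $L^2(G,K,|\sig|^{l+1}dg)$; moreover, for $f\in\CH(G,K)$ one has $\CF_\rho(f)=|\sig|^{-l-1}(\Phi^K_{\psi,\rho}*f^\vee)\in\CS_\rho(G,K)$, and this lies in $L^2(G,K,|\sig|^{l+1}dg)$ because $1_{\rho,-l/2}$ does: from the expansion of $1_\rho$ recalled in Section~\ref{sec:basicfun} the weighted $L^2$-mass of $1_{\rho,-l/2}$ is, up to bounded factors, $\sum_{\mu}|c_\mu(q)|^2q^{-\sig(\mu)}$, which converges since the $c_\mu(q)$ grow polynomially and for each $k\ge0$ there are only finitely many $\mu$ with $\sig(\mu)=k$ and $c_\mu(q)\ne0$. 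Hence it suffices to prove $<\CF_\rho(f),\CF_\rho(h)>=<f,h>$, with both inner products in $L^2(G,K,|\sig|^{l+1}dg)$, for all $f,h\in\CH(G,K)$.

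Writing $\phi_{l+1}:=\phi|\sig|^{l+1}$ and using $\overline{\Phi^K_{\psi,\rho}}=\Phi^K_{\psi,\rho}$ from Remark~\ref{realvalued}, which gives $\overline{\CF_\rho(h)}=\CF_\rho(\overline h)$, the two inner products become
\begin{align*}
<\CF_\rho(f),\CF_\rho(h)>&=\bigl(\CF_{\rho,l+1}(f)*\CF_\rho(\overline h)^\vee\bigr)(e),\\
<f,h>&=\bigl(\overline h_{l+1}*f^\vee\bigr)(e),
\end{align*}
where $\CF_{\rho,l+1}(f)=\CF_\rho(f)|\sig|^{l+1}$. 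Both right-hand sides are the values at $e$ of bi-$K$-invariant functions to which the Satake isomorphism extended to $\CH_{ac}(G,K)$ \cite[Proposition~2.3.2]{wwlibasic} applies, so it suffices to show that $\CF_{\rho,l+1}(f)*\CF_\rho(\overline h)^\vee$ and $\overline h_{l+1}*f^\vee$ have the same Satake transform as rational functions on $\wh T/W$.

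Under the (extended) Satake isomorphism, convolution becomes product, $\phi\mapsto\phi^\vee$ becomes $F(c)\mapsto F(c^{-1})$, and $\phi\mapsto\phi|\sig|^a$ becomes $F(c)\mapsto F(cq^{-a})$, exactly as in the proof of Proposition~\ref{basicingredient}; also $\Psi(c):=\CS(\Phi^K_{\psi,\rho})(c)=\gam(-\tfrac l2,\pi_c^\vee,\rho,\psi)=L(1+\tfrac l2,\pi_c,\rho)/L(-\tfrac l2,\pi_c,\rho^\vee)$. A direct substitution then yields
$$
\CS(\CF_{\rho,l+1}(f))(c)=\Psi(c)\,\CS(f^\vee)(c),\qquad
\CS(\CF_\rho(\overline h)^\vee)(c)=\Psi(c^{-1}q^{l+1})\,\CS(\overline h)(cq^{-(l+1)}),
$$
together with $\CS(\overline h_{l+1})(c)=\CS(\overline h)(cq^{-(l+1)})$, so that comparing the two products reduces the claim to the single identity $\Psi(c)\,\Psi(c^{-1}q^{l+1})=1$ on $\wh T/W$. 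To verify this, write $\vpi_1,\dots,\vpi_n$ for the weights of $\rho$, so $L(s,\pi_c,\rho)=\prod_k(1-\vpi_k(c)q^{-s})^{-1}$ and $L(s,\pi_c,\rho^\vee)=\prod_k(1-\vpi_k(c)^{-1}q^{-s})^{-1}$, hence $\Psi(c)=\prod_k\frac{1-\vpi_k(c)^{-1}q^{l/2}}{1-\vpi_k(c)q^{-1-l/2}}$. Since $\rho$ restricted to the central $\BG_m\subset\LG$ is $z\mapsto z\,\Id$ (so $\rho^\vee$ restricted to it is $z\mapsto z^{-1}\Id$), the parameter $c^{-1}q^{l+1}$ has $\rho$-eigenvalues $\vpi_k(c)^{-1}q^{l+1}$ and $\rho^\vee$-eigenvalues $\vpi_k(c)q^{-(l+1)}$, and using $q^{l+1}q^{-1-l/2}=q^{l/2}$, $q^{-(l+1)}q^{l/2}=q^{-1-l/2}$ we get $\Psi(c^{-1}q^{l+1})=\prod_k\frac{1-\vpi_k(c)q^{-1-l/2}}{1-\vpi_k(c)^{-1}q^{l/2}}=\Psi(c)^{-1}$, as needed. (Conceptually this is the relation $\gam(s,\pi,\rho,\psi)\gam(1-s,\pi^\vee,\rho,\psi)=\veps(s,\pi,\rho,\psi)\veps(1-s,\pi^\vee,\rho,\psi)=1$, an exact equality here because $\veps\equiv1$ for $\psi$ of level $0$.) Together with the density reduction this proves the proposition.

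I expect the main difficulty to be not the algebra but the bookkeeping in the middle steps: tracking the interaction of $(\cdot)^\vee$, the $|\sig|^{l+1}$-twist and complex conjugation at the level of Satake parameters, and — more delicately — justifying the computation of $\CS(\CF_{\rho,l+1}(f)*\CF_\rho(\overline h)^\vee)$ inside the extended framework of \cite{wwlibasic}, since this convolution has one factor with $|\sig|$-support bounded above and one with $|\sig|$-support bounded below; the apparently non-compactly-supported components cancel once $\Psi(c)\Psi(c^{-1}q^{l+1})=1$ is invoked, but this should be verified degree by degree. The preliminary square-integrability in the first step is of the same technical nature; everything else is routine.
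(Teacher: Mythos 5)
Your proposal is correct and follows essentially the same route as the paper's own proof: reduce by density to $f,h\in\CH(G,K)$, rewrite both inner products as convolutions evaluated at $e$, apply the (extended) Satake isomorphism, and reduce everything to the single identity $\CS(\Phi^{K}_{\psi,\rho})(c)\,\CS(\wb{\Phi^{K}_{\psi,\rho}}^{\vee})(c\cdot q^{-(l+1)})=1$, which you verify by the same $L$-factor manipulation (yours written out via the weights $\vpi_{k}$ and the central character assumption, the paper's via $L(s,\pi^{\vee},\rho)=L(s,\pi,\rho^{\vee})$ and specializing $\CS(\Phi^{K}_{\psi,\rho,s})$ at $s=-(l+1)$, $c\mapsto c^{-1}$). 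Your preliminary square-integrability check for $\CF_{\rho}(f)$ is extra care the paper omits but does not change the argument.
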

\begin{proof}
To show that that $\CF_{\rho}$ extends to a unitary operator on the space $L^{2}(G,K,|\sig|^{l+1}dg)$, we only need to show the equality
\begin{align*}
<\CF_{\rho}(f),\CF_{\rho}(h)>_{L^{2}(G,K,|\sig|^{l+1}dg)} = <f,h>_{L^{2}(G,K,|\sig|^{l+1}dg)}
\end{align*}
for all $f$ and $h$ in $\CH(G,K)$.

Now
\begin{align*}
<\CF_{\rho}(f),\CF_{\rho}(h)>_{L^{2}(G,K,|\sig|^{l+1}dg)} &= \int_{G}\CF_{\rho}(f)(g)\wb{\CF_{\rho}(h)}(g)|\sig (g)|^{l+1}dg
\\&= \CF_{\rho,l+1}(f)*\wb{\CF_{\rho}(h)}^{\vee}(e)
\\
<f,h>_{L^{2}(G,K,|\sig|^{l+1}dg)}
&= \int_{G}f(g)\wb{h}(g)|\sig(g)|^{l+1}dg
\\&=\wb{h}_{l+1}*f^{\vee}(e).
\end{align*}

To show that they are equal to each other, using the Satake isomorphism, it is enough to show that
as a rational function in $c\in \wh{T}/W$, we have
\begin{align*}
\CS(\CF_{\rho,l+1}(f)*\wb{\CF_{\rho}(h)}^{\vee})(c) = \CS(\wb{h}_{l+1}*f^{\vee})(c).
\end{align*}
Using the fact that $\CS$ is an algebra homomorphism, we get
\begin{align}\label{simplifytrace}
\CS(\CF_{\rho,l+1}(f)*\wb{\CF_{\rho}(h)}^{\vee}) = \CS(\CF_{\rho,l+1}(f))\CS(\wb{\CF_{\rho}(h)}^{\vee}).
\end{align}
Now
\begin{align*}
\CF_{\rho,l+1}(f)(g) &= \CF_{\rho}(f)(g)|\sig(g)|^{l+1}
\\&= \Phi_{\psi,\rho}^{K}*f^{\vee}(g),
\\
\wb{\CF_{\rho}(h)}^{\vee}(g) &= \wb{\CF_{\rho}(h)}(g^{-1}) =|\sig(g)|^{l+1}\wb{\Phi_{\psi,\rho}^{K}}*\wb{h}^{\vee}(g^{-1})
\\ &= |\sig(g)|^{l+1} (\wb{\Phi_{\psi,\rho}^{K}}*\wb{h}^{\vee})^{\vee}(g)
\\&=  |\sig(g)|^{l+1} (\wb{h} *\wb{\Phi_{\psi,\rho}^{K}}^{\vee})(g).
\end{align*}
Plug the calculations into the equation (\ref{simplifytrace}), we get that as a rational function in $c\in \wh{T}/W$, the left hand side of the equation (\ref{simplifytrace}) can be written as
\begin{align}\label{afterfourier}
\CS(\Phi^{K}_{\psi,\rho})(c)\CS(f^{\vee})(c) \CS(\wb{h})(c\cdot q^{-(l+1)})
\CS(\wb{\Phi_{\psi,\rho}^{K}}^{\vee})(c\cdot q^{-(l+1)}).
\end{align}
Similarly, the right hand side of the equation (\ref{simplifytrace}) can be written as
\begin{align}\label{beforefourier}
\CS(\wb{h}_{l+1})(c)\CS(f^{\vee})(c)
=\CS(\wb{h})(c\cdot q^{-(l+1)})\CS(f^{\vee})(c).
\end{align}
Comparing equations (\ref{afterfourier}) and (\ref{beforefourier}), we only need to show the following equality
\begin{align*}
\CS(\Phi^{K}_{\psi,\rho})(c)\CS(\wb{\Phi_{\psi,\rho}^{K}}^{\vee})(c\cdot q^{-(l+1)}) =1.
\end{align*}
First we simplify the term
\begin{align*}
\CS(\wb{\Phi_{\psi,\rho}^{K}}^{\vee})(c\cdot q^{-(l+1)}) = \CS((\wb{\Phi_{\psi,\rho}^{K}}^{\vee})_{l+1})(c).
\end{align*}
Then using the definition of $\Phi^{K}_{\psi,\rho,s}$, we have
\begin{align*}
\CS(\Phi^{K}_{\psi,\rho,s})(c) = \gam(-s-\frac{l}{2},(\pi_{c})^{\vee},\rho,\psi)
=\frac{L(1+s+\frac{l}{2},\pi_{c},\rho)}{L(-s-\frac{l}{2},\pi_{c},\rho^{\vee})}.
\end{align*}
Letting $s=0$, we get that as a rational function in $c\in \wh{T}/W$,
\begin{align*}
\CS(\Phi^{K}_{\psi,\rho})(c) = \frac{L(1+\frac{l}{2},\pi_{c},\rho)}{L(-\frac{l}{2},\pi_{c},\rho^{\vee})}
\end{align*}
By Remark \ref{realvalued}, the function $\Phi^{K}_{\psi,\rho}$ is real-valued, which means that $\wb{\Phi_{\psi,\rho}^{K}} = \Phi_{\psi,\rho}^{K}$, therefore
\begin{align*}
\CS((\wb{\Phi_{\psi,\rho}^{K}}^{\vee})_{l+1})(c) &=
\CS(\wb{\Phi_{\psi,\rho,-(l+1)}^{K}})(c^{-1})
=\CS(\Phi_{\psi,\rho,-(l+1)}^{K})(c^{-1})
\\&=
\frac{L(-\frac{l}{2},\pi_{c}^{\vee},\rho)}{L(\frac{l}{2}+1,\pi_{c}^{\vee},\rho^{\vee})}
= \frac{L(-\frac{l}{2},\pi_{c},\rho^{\vee})}{L(\frac{l}{2}+1,\pi_{c},\rho)} = \CS(\Phi^{K}_{\psi,\rho,s})(c)^{-1}.
\end{align*}
It follows that
\begin{align*}
\CS(\CF_{\rho,l+1}(f)*\wb{\CF_{\rho}(h)}^{\vee})(c) = \CS(\wb{h}_{l+1}*f^{\vee})(c)
\end{align*}
as a rational function in $c\in \wh{T}/W$.
Using the inverse Satake isomorphism we get the desired equality
\begin{align*}
\CF_{\rho,l+1}(f)*\wb{\CF_{\rho}(h)}^{\vee} = \wb{h}_{l+1}*f^{\vee}.
\end{align*}
\end{proof}

\section{Archimedean Case}

In this section we study asymptotic properties for $1_{\rho,s}$ and $\Phi^{K}_{\psi,\rho,s}$ when $F$ is an archimedean field.

First we give the definition of $1_{\rho,s}$ and $\Phi^{K}_{\psi,\rho,s}$.
\begin{defin}
The basic function $1_{\rho,s}$ is defined to be the smooth bi-$K$-invariant function on $G$ such that
\begin{align*}
\int_{G} 1_{\rho,s} (g)\vphi_{\pi}(g)dg = L(s,\pi,\rho),
\end{align*}
where $\vphi_{\pi}$ is the zonal spherical function associated to the spherical representation $\pi$ of $G$, and $1_{\rho,s}  = 1_{\rho}|\sig|^{s}$.
\end{defin}

\begin{defin}
The spherical component of the distribution kernel of $\rho$-Fourier transform kernel $\Phi_{\psi,\rho,s}$, which we denote by $\Phi^{K}_{\psi,\rho,s}$, is defined to be the smooth bi-$K$-invariant function on $G$ such that
\begin{align*}
\int_{G} \Phi_{\psi,\rho,s}^{K} (g)\vphi_{\pi}(g)dg = \gam(-s-\frac{l}{2},\pi^{\vee},\rho,\psi),
\end{align*}
where $\vphi_{\pi}$ is the zonal spherical function associated to the spherical representation $\pi$ of $G$,
and $\Phi_{\psi,\rho,s}^{K} = \Phi_{\psi,\rho}^{K}|\sig|^{s}$.
\end{defin}

By the \emph{spherical Plancherel transform}, we know that the analytical properties of $1_{\rho,s}$ and $\Phi_{\psi,\rho,s}^{K}$ are completely determined by the corresponding analytical properties of $L(s,\pi,\rho)$ and $\gam(s,\pi,\rho,\psi)$.

\subsection{Spherical Plancherel Transform}\label{sec:spplancherel}
In this section, we review the theory of spherical plancherel transform for any real reductive Lie group belonging to the \emph{Harish-Chandra class} as defined in \cite[Definition~2.1.1]{gangolli2012harmonic}. In particular, it applies to our situation. The main references are \cite{anker1991spherical} and \cite{gangolli2012harmonic}.

Let $\Fg$ be the Lie algebra of $G$. We fix the Cartan decomposition $\Fg = \Fk \oplus \Fp$, where $\Fk$ is the Lie algebra of $K$. For any $\lam\in \Fa^{*}$, where $\Fa$ is the maximal abelian subalgebra of $\Fp$,  we let $\pi_{\lam}$ be the spherical representation induced from the character
\begin{align*}
m \exp(H) n \to e^{i\lam(H)}, H\in \Fa
\end{align*}
of the minimal parabolic subgroup $P=MAN$. Here $A=\exp \Fa$, $M$ is the centralizer of $A$ in $K$, and $N$ is the corresponding unipotent radical.

We denote the zonal spherical function of  $\pi_{\lam}$ by $\vphi_{\lam}$.

We fix the norm $|\cdot|$ induced by the Killing form on $G$ as in \cite[1~Preliminaries]{anker1991spherical}.

The elements of $U(\Fg)$ acts on $C^{\infty}(G)$ as differential operators. Following \cite[1~Preliminaries]{anker1991spherical}, for any $(D,E)\in U(\Fg)\times U(\Fg)$ and $f\in C^{\infty}(G)$, $x\in G$, we can define the left $D$ right $E$ derivative $f(D;x;E)$ of $f$, which again lies in $C^{\infty}(G)$.

We introduce the function spaces $S^{p}(K\bs G/K)$ and $S(\Fa^{*}_{\veps})$. Here $0<p\leq 2$ is any real number, and
$\veps = \frac{2}{p}-1$.

\begin{defin}
For $0<p\leq 2$, let $S^{p}(K\bs G/K)$ be the space of bi-$K$-invariant functions $f$ in $C^{\infty}(K\bs G/K)$ such that the following norm
\begin{align*}
\sig^{(p)}_{D,E,s}(f) = \sup_{x\in G} (|x|+1)^{s}\vphi_{0}(x)^{-\frac{2}{p}} |f(D;x;E)|
\end{align*}
is finite for any $D,E\in U(\Fg)$, $s\in \BZ^{+}$.
\end{defin}
Using the natural convolution structure of two bi-$K$-invariant functions, we can prove that $S^{p}(K\bs G/K)$ is a Frech\'et algebra, where the topology is induced by the semi-norms given by $\{ \sig^{(p)}_{D,E,s} | \quad D,E\in U(\Fg), s\in \BZ^{+}  \}$. Moreover, as mentioned in \cite[Lemma~6]{anker1991spherical}, the space $C^{\infty}_{c}(K\bs G/K)$ is a dense subspace of $S^{p}(K\bs G/K)$.

Now we introduce the space $S(\Fa^{*}_{\veps})$.
\begin{defin}
Let $C^{\veps \rho}$ be the convex hull generated by $W\cdot \veps \rho_{B}$ in $\Fa^{*}$.
Let $\Fa^{*}_{\veps} = \Fa^{*}+iC^{\veps \rho_{B}}$. Then $S(\Fa^{*}_{\veps})$ consists of complex valued functions $h$ on $\Fa^{*}_{\veps}$ such that the following holds.
\begin{enumerate}
\item $h$ is holomorphic in the interior of $\Fa^{*}_{\veps}$.

\item $h$ and all its derivatives extend continuously to $\Fa^{*}_{\veps}$.

\item For any polynomial function $P$ on $\Fa^{*}_{\veps}$, $t\in \BZ^{+}$,
\begin{align*}
\tau^{(\veps)}_{P,t} (h) = \sup_{\lam\in \Fa^{*}_{\eps}} (|\lam|+1)^{t} |P(\frac{\partial}{\partial \lam})h(\lam)|
\end{align*}
is finite.
\end{enumerate}
\end{defin}

Let $S(\Fa^{*}_{\veps})^{W}$ be the $W$-invariant elements in $S(\Fa^{*}_{\veps})$.
We can show that $S(\Fa^{*}_{\veps})^{W}$ is a Frech\'et algebra, where the algebra structure is given by pointwise multiplication, and the $W$-invariant Paley-Wiener functions on $\Fa^{*}_{\BC}$, denoted by $\CP(\Fa^{*}_{\BC})^{W}$, is a dense subspace of $S(\Fa^{*}_{\veps})^{W}$ after restricted to $\Fa^{*}_{\veps}$.

In particular, when $\veps=0$, $S(\Fa^{*})$  is the classical Schwartz space on $\Fa^{*}$.

\begin{defin}
For any $f\in S^{p}(K\bs G/K)$, $\lam\in \Fa^{*}$, let $\CH$ be the spherical transform defined by
\begin{align*}
\CH(f) (\lam) = \int_{G} f(x)\vphi_{\lam}(x)dx.
\end{align*}
\end{defin}

\begin{thm}[\cite{anker1991spherical},\cite{gangolli2012harmonic}]\label{isothm}
\begin{enumerate}
\item $\CH$ is a topological isomorphism of Frech\'et algebra between $S^{p}(K\bs G/K)$ and $S(\Fa^{*}_{\veps})^{W}$, where $0<p\leq 2$ and $\veps = \frac{2}{p}-1$.

\item The inverse transform is given by
$$
\CH^{-1}(h)(x) = \textrm{const}\int_{\Fa^{*}}d\lam |c(\lam)|^{-2} h(-\lam)\vphi_{\lam}(x).
$$

\end{enumerate}
\end{thm}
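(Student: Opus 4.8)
This is the $L^{p}$-Schwartz-space isomorphism theorem for the Harish-Chandra spherical transform (Trombi--Varadarajan; the streamlined argument I follow is Anker's, see \cite{anker1991spherical}, \cite{gangolli2012harmonic}); the plan is to recall the proof in four steps, resting on two families of estimates. On the group side: writing $x=k_{1}\exp(H)k_{2}$ with $H\in\overline{\Fa^{+}}$, one has $|\vphi_{\lam}(x)|\le e^{\max_{w\in W}\langle w\,\Im\lam,\,H\rangle}\vphi_{0}(x)$ for all $\lam\in\Fa^{*}_{\BC}$, the growth asymptotics $\vphi_{0}(\exp H)\asymp(1+|H|)^{d}e^{-\rho_{B}(H)}$, and the integrability $\int_{G}\vphi_{0}(x)^{2}(1+|x|)^{-N}\,dx<\infty$ for $N$ large. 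On the dual side: the Gindikin--Karpelevich product formula for Harish-Chandra's $c$-function, giving that $|c(\lam)|^{-2}$ is of polynomial growth and that $1/c(\lam)$ is holomorphic of polynomial growth on the relevant half-tubes in $\Fa^{*}_{\BC}$.

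\emph{Step 1: $\CH$ maps $S^{p}(K\bs G/K)$ continuously into $S(\Fa^{*}_{\veps})^{W}$.} For $\Im\lam\in C^{\veps\rho_{B}}$ and $H\in\overline{\Fa^{+}}$, $W$-invariance and convexity of $C^{\veps\rho_{B}}$ give $\max_{w}\langle w\,\Im\lam,\,H\rangle\le\veps\,\rho_{B}(H)$; since $\vphi_{0}(\exp H)\asymp(1+|H|)^{d}e^{-\rho_{B}(H)}$ this means $|\vphi_{\lam}(x)|\lesssim(1+|x|)^{d\veps}\vphi_{0}(x)^{1-\veps}$ uniformly on $\Fa^{*}_{\veps}$, so the integrand of $\CH(f)(\lam)=\int_{G}f(x)\vphi_{\lam}(x)\,dx$ is bounded by $\sig^{(p)}_{1,1,N}(f)\,(1+|x|)^{-N+d\veps}\vphi_{0}(x)^{2/p+1-\veps}=\sig^{(p)}_{1,1,N}(f)\,(1+|x|)^{-N+d\veps}\vphi_{0}(x)^{2}$ as $2/p=1+\veps$, hence integrable for $N$ large. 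Thus $\CH(f)$ is bounded and continuous on $\Fa^{*}_{\veps}$; holomorphy in the interior follows by differentiating under the integral with Cauchy estimates on a slightly larger tube, and $W$-invariance from $\vphi_{w\lam}=\vphi_{\lam}$. For the seminorms $\tau^{(\veps)}_{P,t}(\CH f)$ I would use the Harish-Chandra isomorphism $\gamma\colon\BD(G/K)\to\BC[\Fa^{*}_{\BC}]^{W}$ with $D\vphi_{\lam}=\gamma(D)(i\lam)\vphi_{\lam}$: since $S^{p}(K\bs G/K)$ is stable under $\BD(G/K)$, one has $\gamma(D)(i\lam)\CH(f)(\lam)=\CH(Df)(\lam)$, bounded for each $D$, and letting $D$ range over a generating set produces every $W$-invariant polynomial factor; the extra $\lam$-derivatives and the polynomial weight $(1+|\lam|)^{t}$ are obtained by differentiating $\int_{G}f(x)\partial_{\lam}^{\alpha}\vphi_{\lam}(x)\,dx$, the $\lam$-derivatives of $\vphi_{\lam}$ obeying the same $\vphi_{0}$-bound up to extra polynomial factors in $|x|$ absorbed by the rapid decay built into $S^{p}$.

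\emph{Step 2 (the main obstacle): inversion and the wave-packet estimate.} By Harish-Chandra's Plancherel formula $f=\CJ\CH f$ for $f\in C^{\infty}_{c}(K\bs G/K)$, where $\CJ h(x)=\mathrm{const}\cdot\int_{\Fa^{*}}h(-\lam)\vphi_{\lam}(x)|c(\lam)|^{-2}\,d\lam$; with the Paley--Wiener theorem this makes $\CH$ and $\CJ$ mutually inverse bijections between $C^{\infty}_{c}(K\bs G/K)$ and $\CP(\Fa^{*}_{\BC})^{W}$. As these are dense in $S^{p}(K\bs G/K)$ and $S(\Fa^{*}_{\veps})^{W}$, it suffices to show $\CJ$ extends to a continuous map $S(\Fa^{*}_{\veps})^{W}\to S^{p}(K\bs G/K)$; then $\CH\circ\CJ=\mathrm{id}$ and $\CJ\circ\CH=\mathrm{id}$ propagate by continuity and $\CH^{-1}=\CJ$ is the asserted inversion formula. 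The continuity of $\CJ$ is the technically delicate point: one must show $\CJ h$ and all its left/right $U(\Fg)$-derivatives decay like $\vphi_{0}(x)^{2/p}(1+|x|)^{-s}$. The standard route is a contour shift — using $W$-invariance and the Gindikin--Karpelevich formula, rewrite $\CJ h(\exp H)$ for $H\in\overline{\Fa^{+}}$ as $e^{-\rho_{B}(H)}$ times a Fourier-type integral of $h/c$ against an exponential, and move the contour off $\Fa^{*}$ toward the boundary of the tube; the gained exponential factor, which at the boundary reaches $e^{-\veps\rho_{B}(H)}$, upgrades the automatic bound $|\CJ h(\exp H)|\lesssim\vphi_{0}(\exp H)$ to $|\CJ h(\exp H)|\lesssim\vphi_{0}(\exp H)^{2/p}(1+|H|)^{-s}$, the polynomial factors coming from $1/c$ and from the subleading terms of the Harish-Chandra expansion being absorbed by the rapid decay of $h$. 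This is exactly where the rate $\veps\rho_{B}$ defining the tube $\Fa^{*}_{\veps}$ must be matched against the target $L^{p}$-decay $\vphi_{0}^{2/p}$. Invariant differential operators on the group pass through the integral as $W$-invariant polynomial multipliers, again dominated by the decay of $h$; a general $f(D;x;E)$ is reduced to the bi-$K$-invariant case by the Iwasawa and Cartan decompositions together with the PBW filtration of $U(\Fg)$, at the cost of harmless polynomial factors.

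\emph{Step 3: Fr\'echet-algebra structure.} For bi-$K$-invariant $f,g$, the functional equation $\int_{K}\vphi_{\lam}(xky)\,dk=\vphi_{\lam}(x)\vphi_{\lam}(y)$ and Fubini, justified by the absolute convergence of Step 1, give $\CH(f*g)=\CH(f)\cdot\CH(g)$; continuity of convolution on $S^{p}(K\bs G/K)$ and of pointwise multiplication on $S(\Fa^{*}_{\veps})^{W}$ are immediate from the defining seminorms. Combining with Steps 1 and 2, $\CH$ is a topological isomorphism of Fr\'echet algebras with inverse $\CH^{-1}=\CJ$, which is the claim.
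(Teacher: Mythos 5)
The paper does not prove Theorem \ref{isothm}; it quotes it directly from \cite{anker1991spherical} and \cite{gangolli2012harmonic}, and your outline is precisely the standard Trombi--Varadarajan/Anker argument contained in those references (spherical-function and $c$-function estimates for the forward map, the wave-packet/contour-shift estimate for $\CJ$ on the tube $\Fa^{*}_{\veps}$, density of $C^{\infty}_{c}(K\bs G/K)$ and $\CP(\Fa^{*}_{\BC})^{W}$, and the functional equation for the algebra structure). So your proposal is correct and takes essentially the same approach as the paper's cited source.
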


\subsection{Langlands Classification for $\GL_{n}(\BR)$: Spherical Case}\label{sec:corrreal}
Before coming to study the analytical properties of $L$-functions and $\gam$-factors, we need to obtain an explicit formula for $L$-functions and $\gam$-factors. Therefore we review the Langlands classifications and Langlands correspondence of spherical representations for $\GL_{n}(\BR)$ and $\GL_{n}(\BC)$. The main reference for this and next sections is \cite{knapp55local}. For more advanced reference, the reader can consult \cite{langlands1989irreducible}.

The Langlands classification for $\GL_{n}(\BR)$ describes all irreducible admissible representations of $\GL_{n}(\BR)$ up to infinitesimal equivalence. Since we only care about the spherical representations, we only present the classification and correspondence for spherical representations of $\GL_{n}(\BR)$.

The building blocks for spherical representations of $\GL_{n}(\BR)$ are the quasi-character $a\to |a|^{t}_{\BR}$ of $\GL_{1}(\BR)$. Here $|\cdot|_{\BR}$ denotes the ordinary valuation on $\BR$, and $t\in \BC$.

We have the diagonal torus subgroup
\begin{align*}
T=\GL_{1}(\BR)\times ... \times \GL_{1}(\BR)\cong (\GL_{1}(\BR))^{n}.
\end{align*}
For each $j$ with $1\leq j\leq n$, let $\sig_{j}$ be a quasi-character of $\GL_{1}(\BR)$ of the form $a\to |a|^{t_{j}}_{\BR}$. Then by tensor product, $(\sig_{1},...,\sig_{n})$ defines a representation of the diagonal torus $T$, and we extend the representation to the corresponding Borel subgroup $B=TN$,  where $N$ is the unipotent radical. We set
\begin{align*}
I(\sig_{1},...,\sig_{n}) = \textrm{Ind}^{G}_{B}(\sig_{1},...,\sig_{n})
\end{align*}
using unitary induction.

\begin{thm}\cite[Theorem~1]{knapp55local}\label{llcreal}
For $G=\GL_{n}(\BR)$,
\begin{enumerate}
\item if the parameters $t_{j}$ of $(\sig_{1},...,\sig_{n})$ satisfy
\begin{align*}
\Re \, t_{1}\geq \Re \, t_{2} \geq ...\geq \Re \, t_{n},
\end{align*}
then $I(\sig_{1},...,\sig_{n})$ has a unique irreducible quotient $J(\sig_{1},...,\sig_{n})$.

\item the representations $J(\sig_{1},...,\sig_{n})$ exhaust the spherical representation of $G$ up to infinitesimal equivalence.

\item Two such representations $J(\sig_{1},...,\sig_{n})$ and $J(\sig^{\p}_{1},...,\sig^{\p}_{n^{\p}})$ are infinitesimally equivalent if and only if $n^{\p} = n$ and there exists a permutation $j(i)$ of $\{ 1,...,n \}$ such that $\sig^{\p}_{i} = \sig_{j(i)}$ for $1\leq i\leq n$.
\end{enumerate}
\end{thm}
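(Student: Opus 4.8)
The statement is the restriction of the \emph{Langlands classification} to spherical representations of $\GL_{n}(\BR)$, and the plan is to deduce it from the Langlands quotient theorem together with the Helgason--Harish-Chandra theory of the spherical constituents of minimal principal series; no genuinely new ingredient is needed beyond bookkeeping adapted to $\GL_{n}(\BR)$.

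For part (1), write $t_{j}=a_{j}+ib_{j}$ and partition $\{1,\dots,n\}$ into the maximal blocks on which $a_{j}$ is constant; after the arrangement $\Re\, t_{1}\geq\cdots\geq\Re\, t_{n}$ these blocks are consecutive and their common real parts are \emph{strictly} decreasing. Let $P=MN$ be the standard parabolic with Levi $M=\prod_{i}\GL_{n_{i}}(\BR)$ matching this partition. On the $i$-th block the characters $|\cdot|^{ib_{j}}_{\BR}$ are unitary and trivial on $\{\pm1\}$, and any ratio of two of them is $|\cdot|^{i(b-b')}_{\BR}$, which is unitary and is never a nonzero integral power of $|\cdot|_{\BR}$ nor such a power times $\sgn$; hence no $\GL_{2}$-subblock is reducible, the unitary spherical principal series on each block is irreducible, and its tensor product $\delta$ is a tempered representation of $M$. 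Inducing in stages gives $I(\sig_{1},\dots,\sig_{n})=\Ind_{P}^{G}(\delta\otimes\nu)$, where $\nu$ is the tuple of common real parts and is strictly dominant for $P$. This is a standard module, so the Langlands quotient theorem produces a unique irreducible quotient $J(\sig_{1},\dots,\sig_{n})$.

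For parts (2) and (3) I would invoke the standard fact that every minimal spherical principal series $I(\nu)$, $\nu\in\Fa^{*}_{\BC}$, has exactly one irreducible spherical subquotient $J(\nu)$, that every irreducible spherical representation of $G$ arises this way, and that $J(\nu)\cong J(\nu')$ precisely when $\nu'\in W\nu$ (for $G=\GL_{n}(\BR)$, $W$ permutes the coordinates of $\nu=(t_{1},\dots,t_{n})$). The key compatibility to establish is that, with the $t_{j}$ arranged as in part (1), the Langlands quotient $J(\sig_{1},\dots,\sig_{n})$ \emph{is} this spherical constituent: the long intertwining operator $A(w_{0})\colon I(\nu)\to I(w_{0}\nu)$ carries the normalized spherical vector to $c(\nu)$ times the spherical vector of $I(w_{0}\nu)$, where $c(\nu)=\prod_{\alpha>0}c_{\alpha}(\langle\nu,\alpha^{\vee}\rangle)$ is the Harish-Chandra $c$-function; each $c_{\alpha}$ is a ratio of Gamma factors whose only zeros occur at negative values of its argument, so $c(\nu)\neq0$ once $\langle\Re\nu,\alpha^{\vee}\rangle\geq0$ for all $\alpha>0$. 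Hence the spherical vector survives in the image of $A(w_{0})$, i.e.\ in the Langlands quotient, which by uniqueness of the spherical constituent must equal $J(\nu)$. Granting this, (2) follows: any irreducible spherical $\pi$ equals $J(\nu)$ for some $\nu$, and re-sorting the coordinates of $\nu$ so that the real parts weakly decrease exhibits $\pi$ as one of the $J(\sig_{1},\dots,\sig_{n})$. And (3) follows too: $J(\sig_{1},\dots,\sig_{n})$ and $J(\sig'_{1},\dots,\sig'_{n'})$, being the spherical constituents of $I((t_{j}))$ and $I((t'_{i}))$, are infinitesimally equivalent iff the parameter multisets coincide, which (matching, say, infinitesimal characters, which also forces $n=n'$) is exactly the condition that some permutation $j(i)$ of $\{1,\dots,n\}$ satisfies $\sig'_{i}=\sig_{j(i)}$.

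The main obstacle is precisely this compatibility between the two descriptions of the spherical representation --- Langlands quotient versus spherical constituent --- which rests on the non-vanishing of $c(\nu)$ in the dominant range (equivalently, on the spherical vector not being killed when one passes to the Langlands quotient). Everything else is formal once the Langlands quotient theorem and the basic theory of the spherical function are in hand. One could alternatively bypass the $c$-function and argue directly that for dominant $\Re\nu$ the image of the normalized spherical vector generates $I(\nu)$ modulo its maximal proper submodule, but the $c$-function computation seems the most economical route.
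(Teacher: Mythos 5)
The paper itself contains no proof of this statement: it is quoted directly from Knapp's expository article (Theorem 1 there), which in turn derives it from the general Langlands classification, so there is no internal argument to compare yours against step by step. Your sketch is the standard route and is essentially correct: group the equal real parts into a block parabolic to form a standard module, invoke the Langlands quotient theorem for (1), identify the Langlands quotient with the unique spherical constituent of the minimal principal series by showing the spherical vector survives, and then obtain (2) and (3) from Harish-Chandra's parametrization of zonal spherical functions ($\varphi_{\lambda}=\varphi_{\mu}$ iff $\mu\in W\lambda$, which classifies irreducible spherical representations by $\Fa^{*}_{\BC}/W$).

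Two steps need more care than you give them. First, irreducibility of the unitary spherical principal series of each block $\GL_{n_{i}}(\BR)$ does not follow from irreducibility of the $\GL_{2}$-subblocks alone; in general one also needs the Knapp--Stein $R$-group attached to the inducing character to be trivial. It is trivial for $\GL_{n}(\BR)$, so your $\delta$ is indeed irreducible tempered, but this input should be made explicit, since the uniqueness claim in (1) genuinely uses irreducibility of the inducing tempered representation (for reducible $\delta$ the cosocle of the standard module need not be irreducible). Second, in parts (2)--(3) you run the intertwining-operator argument at the Borel with only weak dominance $\langle \Re\,\nu,\alpha^{\vee}\rangle\geq 0$; on the walls the rank-one factor $\Gam(z/2)/\Gam((z+1)/2)$ has a pole at $z=0$ (not a zero), the unnormalized integral defining $A(w_{0})$ diverges there, and the image of $A(w_{0})$ is the Langlands quotient only for strictly dominant exponents. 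The clean fix is exactly the structure your part (1) already sets up: use the operator attached to the block parabolic $P$, so that only roots outside the Levi contribute, all with $\Re\langle\nu,\alpha^{\vee}\rangle>0$, where the relevant $c$-factors have neither zeros nor poles; then the spherical vector has nonzero image in the Langlands quotient, which must therefore be the unique spherical constituent $J(\nu)$, and (2), (3) follow as you say. With these two repairs your proposal is a complete proof of the cited theorem.
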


Next we determine the corresponding Langlands parameters of spherical representations, which are given by homomorphisms of the abelianlization of the Weil group, which we denoted by $W^{ab}_{\BR}\cong \BC^{\times}$ into $\GL_{n}(\BR)$. Following \cite[Section~3]{knapp55local}, the Langlands parameters corresponding to spherical representations of $\GL_{n}(\BR)$ are given by the direct sum of $n$ one-dimensional representations of $\BC^{\times}$ of the following form:
\begin{align*}
(+,t): \quad \vphi(z) = |z|^{t}_{\BR}, \quad \vphi(j) = +1.
\end{align*}

Now let $\vphi$ be an $n$-dimensional semisimple complex representation of $W_{\BR}$, which is $n$ direct sum of quasi-characters of the form $(+,t)$. For any $1\leq j\leq n$, let $\vphi_{j}$ be the corresponding irreducible constituent of $\vphi$. To $\vphi_{j}$ we associate a quasi-character.
In this way, we associate a tuple $(\sig_{1},...,\sig_{n})$ of representations to $\vphi$. By permutations if necessary, the complex numbers $t_{1},...,t_{n}$ satisfy the assumption of Theorem \ref{llcreal}. Then by Theorem \ref{llcreal}, we can then make the association
\begin{align}\label{llcasso}
\vphi \to \rho_{\BR}(\vphi) = J(\sig_{1},...,\sig_{n})
\end{align}
and come to the following conclusion.

\begin{thm}\cite[Theorem~2]{knapp55local}
The association (\ref{llcasso}) is a well-defined bijection between the set of all equivalence classes of $n$-dimensional semisimple complex representations of $W_{\BR}$ which are $n$ direct sum of one-dimension representations of the form $(+,t)$, and the set of all equivalence classes of spherical representations of $\GL_{n}(\BR)$.
\end{thm}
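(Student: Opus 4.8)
The plan is to deduce the statement directly from Theorem \ref{llcreal}, by matching the combinatorial data that parametrizes the two sides. First I would note that an $n$-dimensional semisimple complex representation $\vphi$ of $W_{\BR}$ which is a direct sum of one-dimensional representations of type $(+,t)$ is determined up to equivalence by the unordered multiset $\{t_{1},\dots,t_{n}\}\subset\BC$ of exponents of its irreducible constituents, and conversely every such multiset arises; this uses only that $(+,t)\cong(+,t')$ as characters of $W_{\BR}^{\ab}\cong\BC^{\times}$ iff $t=t'$, together with uniqueness of the decomposition into irreducibles up to permutation.

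Next I would verify that the association (\ref{llcasso}) is well defined. Given $\vphi=\bigoplus_{j=1}^{n}\vphi_{j}$ with $\vphi_{j}$ of type $(+,t_{j})$, the recipe attaches the quasi-character $\sig_{j}\colon a\mapsto|a|_{\BR}^{t_{j}}$ to $\vphi_{j}$, permutes so that $\Re\,t_{1}\geq\cdots\geq\Re\,t_{n}$, and sets $\rho_{\BR}(\vphi)=J(\sig_{1},\dots,\sig_{n})$. The only choices involved are the ordering among constituents of equal real part and the matching of isomorphic constituents; by part (3) of Theorem \ref{llcreal} the Langlands quotient $J(\sig_{1},\dots,\sig_{n})$ depends only on the multiset $\{\sig_{1},\dots,\sig_{n}\}$, hence only on $\{t_{1},\dots,t_{n}\}$, so $\rho_{\BR}(\vphi)$ is independent of all choices and of the chosen representative of the equivalence class of $\vphi$.

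Injectivity and surjectivity then follow from the remaining parts of Theorem \ref{llcreal}. If $\rho_{\BR}(\vphi)$ and $\rho_{\BR}(\vphi')$ are infinitesimally equivalent, part (3) forces $n=n'$ and equality of the multisets $\{t_{j}\}=\{t'_{j}\}$, whence $\vphi\cong\vphi'$; this is injectivity. For surjectivity, part (2) says every spherical representation of $\GL_{n}(\BR)$ is infinitesimally equivalent to some $J(\sig_{1},\dots,\sig_{n})$ with $\sig_{j}\colon a\mapsto|a|_{\BR}^{t_{j}}$ ordered by descending real part, and then $\vphi=\bigoplus_{j}(+,t_{j})$ satisfies $\rho_{\BR}(\vphi)\cong J(\sig_{1},\dots,\sig_{n})$.

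The only genuinely substantive point --- and the step I would flag as the main obstacle, though it is already folded into the setup recorded before Theorem \ref{llcreal} --- is the representation-theoretic input that the parameters built from $n$ copies of characters of type $(+,t)$ are exactly the ones meeting the spherical representations: one needs that a spherical Langlands quotient $J(\sig_{1},\dots,\sig_{n})$ forces each $\sig_{j}$ to be of the unramified shape $a\mapsto|a|_{\BR}^{t_{j}}$ (no sign characters, no two-dimensional constituents induced from $\BC^{\times}$), and conversely that such a principal series always carries a $K$-fixed vector. These are standard facts about $(\Fg,K)$-modules for $\GL_{n}(\BR)$; granting them, the proof is the bookkeeping argument above.
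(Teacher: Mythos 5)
The paper itself offers no proof of this statement: it is quoted verbatim as \cite[Theorem~2]{knapp55local}, so there is no internal argument to compare yours against. Judged on its own terms, your reduction is sound and is essentially how the cited source obtains the result: once Theorem \ref{llcreal} is granted, parts (2) and (3) give surjectivity onto spherical classes and injectivity/well-definedness of (\ref{llcasso}) respectively, and the passage from $\vphi$ to the multiset $\{t_{1},\dots,t_{n}\}$ is exactly the right bookkeeping, since $(+,t)\cong(+,t')$ iff $t=t'$ and semisimple representations are determined by their irreducible constituents with multiplicity.

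The one point you should sharpen is the direction you fold into ``such a principal series always carries a $K$-fixed vector.'' A $K$-fixed vector in the induced module $I(\sig_{1},\dots,\sig_{n})$ (which exists by the Iwasawa decomposition) is not by itself enough: for $\rho_{\BR}(\vphi)=J(\sig_{1},\dots,\sig_{n})$ to be spherical you need the spherical vector to have nonzero image in the Langlands \emph{quotient}, i.e.\ that the spherical constituent of the unramified principal series with parameter ordered as in Theorem \ref{llcreal}(1) is precisely $J(\sig_{1},\dots,\sig_{n})$, not some proper subrepresentation containing the $K$-fixed line. This is indeed a standard fact about $(\Fg,K)$-modules for $\GL_{n}(\BR)$ (dominant ordering plus the behaviour of the long intertwining operator on the spherical vector), and Theorem \ref{llcreal}(2) as stated only gives the converse containment (every spherical representation is some $J$), so this is the one substantive input your sketch uses beyond what the paper records; with it granted, your argument is complete.
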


If $\vphi$ is one-dimensional given by $(+,t)$, the associated $L$-function and $\veps$-factor are given as follows.
\begin{align*}
L(s,\vphi) &=  \pi^{-\frac{(s+t)}{2}}\Gam\Big(\frac{s+t}{2}\Big), \\
\veps(s,\pi,\psi) &= 1.
\end{align*}

For $\vphi$ reducible, $L(s,\vphi)$ and $\veps(s,\vphi,\psi)$ are the product of the $L$-functions and $\veps(s,\vphi,\psi)$ of the one-dimensional factors of $\vphi$.

\subsection{Langlands Classification for $\GL_{n}(\BC)$: Spherical Case}\label{sec:corrcplx}
The Langlands classification for $\GL_{n}(\BC)$ describes all irreducible admissible representations of $\GL_{n}(\BC)$ up to infinitesimal equivalence. Since we only care about the spherical representations, we only present the classification and correspondence for spherical representations.

The building blocks for spherical representations of the group $\GL_{n}(\BC)$ are the quasi-character $a\to |a|^{t}_{\BC}$ of $\GL_{1}(\BC)$. Here $|\cdot|_{\BC}$ denotes the ordinary valuation on $\BC$ given by
$$
|z|_{\BC} = |z\wb{z}| = |z|^{2}, \quad z\in \BC,
$$
and $t\in \BC$.

We have the diagonal torus subgroup
\begin{align*}
T=\GL_{1}(\BC)\times ... \times \GL_{1}(\BC)\cong (\GL_{1}(\BC))^{n}.
\end{align*}
For each $j$ with $1\leq j\leq n$, let $\sig_{j}$ be a quasi-character of $\GL_{1}(\BC)$ of the form $a\to |a|^{t_{j}}_{\BC}$. Then by tensor product, $(\sig_{1},...,\sig_{n})$ defines a representation of the diagonal torus $T$, and we extend the representation to the corresponding Borel subgroup $B=TN$,  where $N$ is the unipotent radical. We then set
\begin{align*}
I(\sig_{1},...,\sig_{n}) = \textrm{Ind}^{G}_{B}(\sig_{1},...,\sig_{n})
\end{align*}
using unitary induction.

\begin{thm}\cite[Theorem~4]{knapp55local}\label{llccplx}
For $G=\GL_{n}(\BC)$,
\begin{enumerate}
\item if the parameters $t_{j}$ of $(\sig_{1},...,\sig_{n})$ satisfy
\begin{align*}
\Re \, t_{1}\geq \Re \, t_{2} \geq ...\geq \Re \, t_{n},
\end{align*}
then $I(\sig_{1},...,\sig_{n})$ has a unique irreducible quotient $J(\sig_{1},...,\sig_{n})$.

\item the representations $J(\sig_{1},...,\sig_{n})$ exhaust the spherical representations of $G$ up to infinitesimal equivalence.

\item Two such representations $J(\sig_{1},...,\sig_{n})$ and $J(\sig^{\p}_{1},...,\sig^{\p}_{n^{\p}})$ are infinitesimally equivalent if and only if $n^{\p} = n$ and there exists a permutation $j(i)$ of $\{ 1,...,n \}$ such that $\sig^{\p}_{i} = \sig_{j(i)}$ for $1\leq i\leq n$.
\end{enumerate}
\end{thm}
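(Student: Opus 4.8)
The plan is to deduce all three parts from the Langlands classification for real reductive groups, specialized to $G=\GL_{n}(\BC)$ (see \cite{knapp55local}; \cite{langlands1989irreducible} for the general theory), using three facts special to complex groups: (a) a principal series of $\GL_{k}(\BC)$ induced from a \emph{unitary} character of its diagonal torus is irreducible (a classical theorem of Zhelobenko and of Wallach); (b) the trivial $K=U(n)$-type occurs with multiplicity one in every unramified principal series, immediate from the Iwasawa decomposition $G=BK$ and $B\cap K=T\cap K$; and (c) the Langlands quotient of an unramified principal series with dominant exponent is its spherical constituent. For part (1), write $t_{j}=c_{j}+iu_{j}$; removing the real twist $|\det|_{\BC}^{c_{j}}$, each $\sig_{j}$ becomes a unitary character of $\GL_{1}(\BC)$. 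Group $\{1,\dots,n\}$ into the blocks on which $\Re t_{j}$ is constant. The Borel-induced representation of $\GL_{k}(\BC)$ attached to a block of size $k$ equals $|\det|_{\BC}^{c}\otimes I(|\cdot|_{\BC}^{iu_{j_{1}}},\dots,|\cdot|_{\BC}^{iu_{j_{k}}})$, hence by (a) is $|\det|_{\BC}^{c}$ times an irreducible tempered representation of $\GL_{k}(\BC)$. Since $\Re t_{1}\ge\cdots\ge\Re t_{n}$, the blocks occur with strictly decreasing real parts; permuting $\sig_{j}$ within a block (which does not change $I(\sig_{1},\dots,\sig_{n})$ up to isomorphism) and using induction in stages rewrites $I(\sig_{1},\dots,\sig_{n})$ as a Langlands standard module $I_{P}(\tau_{1}\otimes\cdots\otimes\tau_{m})$ induced from the standard parabolic $P\supseteq B$ with Levi $\GL_{k_{1}}(\BC)\times\cdots\times\GL_{k_{m}}(\BC)$, each $\tau_{i}$ tempered up to a real twist, the twisting exponents strictly dominant. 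The Langlands quotient theorem then provides the unique irreducible quotient $J(\sig_{1},\dots,\sig_{n})$.

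For part (2), the subrepresentation theorem realizes an arbitrary irreducible admissible $\pi$ of $\GL_{n}(\BC)$ as a subquotient of some $I_{B}(\chi)$, $\chi$ a character of the diagonal torus. If $\pi$ is spherical, $I_{B}(\chi)$ contains the trivial $U(n)$-type, forcing $\chi$ to be unramified, $\chi=(|\cdot|_{\BC}^{t_{1}},\dots,|\cdot|_{\BC}^{t_{n}})$; by (b) that $K$-type has multiplicity one, so $I_{B}(\chi)$ has a single spherical subquotient, which by (c) — after ordering the $t_{j}$ with weakly decreasing real parts — is the $J(\sig_{1},\dots,\sig_{n})$ of part (1). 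Conversely each $J(\sig_{1},\dots,\sig_{n})$ is spherical, being a quotient of $I_{B}(\chi)$ on which the line of $K$-fixed vectors survives (again by (c)). Hence the $J(\sig_{1},\dots,\sig_{n})$ are precisely the spherical representations of $\GL_{n}(\BC)$.

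For part (3), ``$\Leftarrow$'' holds because permuting the $\sig_{i}$ leaves $I_{B}(\sig_{1},\dots,\sig_{n})$, hence its Langlands quotient, unchanged up to isomorphism. For ``$\Rightarrow$'', the infinitesimal character of $J(\sig_{1},\dots,\sig_{n})$ is the $W$-orbit, equivalently the unordered tuple, of the $t_{i}$ after the standard $\rho_{B}$-shift, and by part (2) a spherical irreducible representation is determined by its infinitesimal character, since $I_{B}(\chi)$ has a unique spherical constituent. Thus $J(\sig_{1},\dots,\sig_{n})\cong J(\sig'_{1},\dots,\sig'_{n'})$ forces the unordered tuples $\{t_{i}\}$ and $\{t'_{i}\}$ to coincide, in particular $n=n'$, which is exactly the asserted permutation statement.

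The main obstacle is not the formal scaffolding but facts (a) and (c): (a) is what certifies that the reorganized module in part (1) is a genuine Langlands standard module, and (c) is the bridge between the Langlands quotient and the spherical theory that makes parts (2) and (3) run. Both are classical for complex groups, but they are the only steps carrying real content; the remainder is the Langlands quotient theorem, its uniqueness clause, and routine bookkeeping. If one invokes the Langlands classification already in its parabolic form, part (1) becomes purely formal and only the identification (c) remains substantive.
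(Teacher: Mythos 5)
The paper does not prove this statement at all: it is imported verbatim from the cited reference \cite{knapp55local} (Theorem~4 there, which treats all irreducible admissible representations of $\GL_n(\BC)$ with arbitrary quasi-characters $\sigma_j$; the paper records only the spherical specialization). So there is no internal proof to compare with; what you have written is essentially a reconstruction of the standard argument behind the cited result, and it is correct in outline: Zhelobenko irreducibility of unitary principal series of complex groups to assemble the tempered data on the block Levi, induction in stages to exhibit $I(\sigma_1,\dots,\sigma_n)$ as a Langlands standard module with strictly dominant exponents, the Langlands quotient theorem for (1), the subquotient theorem plus Frobenius reciprocity and multiplicity one of the trivial $K$-type for (2), and infinitesimal characters for (3). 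Three points should be tightened. First, your fact (c) is the only step with real content and deserves a justification or reference: the relevant intertwining operator acts on the one-dimensional space of $K$-fixed vectors by a Gindikin--Karpelevich $c$-factor, which for $\GL_n(\BC)$ (a product of reciprocals of linear forms in the parameter) has no zeros in the range of convergence, so the spherical vector survives into the Langlands quotient. Second, in (3)$(\Leftarrow)$ the sentence ``permuting the $\sigma_i$ leaves $I_B(\sigma_1,\dots,\sigma_n)$ unchanged up to isomorphism'' is false for general permutations (dominant versus antidominant orderings give non-isomorphic induced modules when reducible); what is true, and suffices, is that both tuples in (3) satisfy the dominance condition, so the permutation preserves the real-part blocks and your block argument from (1) applies, or alternatively one only needs equality of composition factors. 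Third, both the reordering step in (2) and the step in (3)$(\Rightarrow)$ from ``same infinitesimal character'' to ``spherical constituent of the same principal series'' silently use that principal series attached to Weyl-conjugate characters of the torus have the same composition factors (equal distribution characters); this standard fact should be stated explicitly, since combined with multiplicity one of the trivial $K$-type it is exactly what makes the spherical constituent well defined on $W$-orbits of parameters.
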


Next we determine the corresponding Langlands parameters of spherical representations, which are given by homomorphisms of the Weil group $W_{\BC}\cong \BC^{\times}$ into $\GL_{n}(\BC)$. Following \cite[Section~4]{knapp55local}, the Langlands parameters corresponding to spherical representations of $\GL_{n}(\BC)$ are given by the direct sum of $n$ one-dimensional representations of $\BC^{\times}$ of the following form:
$$
(0,t): z\in \BC^{\times}\to |z|^{t}_{\BC}, \quad l\in \BZ, t\in \BC.
$$

Now let $\vphi$ be an $n$-dimensional semisimple complex representation of $W_{\BC}$, which is $n$ direct sum of quasi-characters of the form $(0,t)$. To $\vphi_{j}$ we associate a quasi-character $\sig_{j} = |\cdot|^{t_{j}}_{\BC}$ of $\GL_{1}(\BC)$. In this way, we associate a tuple $(\sig_{1},...,\sig_{n})$ of representations to $\vphi$. By permutations if necessary, the complex numbers $t_{1},...,t_{n}$ satisfy the assumption of Theorem \ref{llccplx}. Then by Theorem \ref{llccplx}, we can then make the association
\begin{align}\label{llcassocplx}
\vphi \to \rho_{\BC}(\vphi) = J(\sig_{1},...,\sig_{n})
\end{align}
and come to the following conclusion.

\begin{thm}\cite[Theorem~5]{knapp55local}
The association (\ref{llcassocplx}) is a well-defined bijection between the set of all equivalence classes of $n$-dimensional semisimple complex representations of $W_{\BC}$ which are $n$ direct sum of $1$-dimensions of form $(0,t)$, and the set of all equivalence classes of spherical representations of $\GL_{n}(\BC)$.
\end{thm}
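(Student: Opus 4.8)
The plan is to deduce the asserted bijection by combining the Langlands classification of Theorem \ref{llccplx} with the special feature of the complex place that $W_{\BC}\cong \BC^{\times}$ is abelian, so that every $n$-dimensional semisimple representation of $W_{\BC}$ decomposes, uniquely up to permutation, as a direct sum of $n$ characters of $\BC^{\times}$; the spherical hypothesis then pins these characters down to the form $(0,t)$, and the rest is bookkeeping with multisets.

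First I would record well-definedness of the map (\ref{llcassocplx}). Given $\vphi=\bigoplus_{j=1}^{n}\vphi_{j}$ with each $\vphi_{j}$ of the form $(0,t_{j})$, the associated tuple $(\sig_{1},\dots,\sig_{n})$ with $\sig_{j}=|\cdot|_{\BC}^{t_{j}}$ is determined, as a multiset, by $\vphi$: indeed $|\cdot|_{\BC}$ takes every value in $\BR_{>0}$, so $t$ is recovered from $r\mapsto r^{t}$ on $\BR_{>0}$. After reordering so that $\Re t_{1}\ge\cdots\ge \Re t_{n}$ — which by part (3) of Theorem \ref{llccplx} does not change the infinitesimal equivalence class of $J(\sig_{1},\dots,\sig_{n})$ — one obtains a well-defined class $\rho_{\BC}(\vphi)$. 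Equivalent $\vphi\cong\vphi'$ have the same multiset of one-dimensional constituents (semisimplicity together with $W_{\BC}$ abelian), hence the same image.

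Next I would check surjectivity and injectivity. For surjectivity: by part (2) of Theorem \ref{llccplx}, every spherical representation of $\GL_{n}(\BC)$ is infinitesimally equivalent to some $J(\sig_{1},\dots,\sig_{n})$; since $K=U(n)$, the Langlands quotient $J(\sig_{1},\dots,\sig_{n})$ can carry a $K$-fixed vector only if each inducing character $\sig_{j}$ is itself $U(1)$-spherical on $\GL_{1}(\BC)$, i.e. of the form $|\cdot|_{\BC}^{t_{j}}$, and conversely such $\sig_{j}$ do occur. Setting $\vphi=\bigoplus_{j}(0,t_{j})$ gives $\rho_{\BC}(\vphi)=J(\sig_{w(1)},\dots,\sig_{w(n)})$ for the sorting permutation $w$, which by part (3) of Theorem \ref{llccplx} is the class we started with. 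For injectivity, suppose $\rho_{\BC}(\vphi)\cong\rho_{\BC}(\vphi')$; part (3) of Theorem \ref{llccplx} forces $n'=n$ and a permutation $j(i)$ with $\sig'_{i}=\sig_{j(i)}$, hence $t'_{i}=t_{j(i)}$, hence the multisets of constituents of $\vphi$ and $\vphi'$ coincide, so $\vphi\cong\vphi'$.

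The one point that genuinely requires care is the identification of the \emph{spherical} side of the correspondence: that $K$-sphericity of $J(\sig_{1},\dots,\sig_{n})$ is equivalent to each $\sig_{j}$ being trivial on $U(1)\subset\GL_{1}(\BC)$, which is what singles out the parameters $(0,t)$ among all characters $(l,t)$ of $\BC^{\times}$. Here one invokes the structure of $K$-types of the principal series — the induced module $\mathrm{Ind}_{B}^{G}(\sig_{1},\dots,\sig_{n})$, and hence its unique irreducible quotient, has a (necessarily one-dimensional) space of $U(n)$-fixed vectors exactly when $(\sig_{1},\dots,\sig_{n})$ is trivial on $T\cap K$ — together with the $\GL_{1}(\BC)$ case of the archimedean local Langlands correspondence matching $(l,t)\leftrightarrow\big(z\mapsto (z/|z|)^{l}|z|_{\BC}^{t}\big)$. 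Everything else reduces to the already-established Theorem \ref{llccplx} and elementary manipulation of multisets, exactly parallel to the real case treated around Theorem \ref{llcreal}.
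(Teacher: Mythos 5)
The paper offers no proof of this statement --- it is quoted directly from \cite[Theorem~5]{knapp55local} --- and your argument is correct and follows essentially the same standard route as that reference: the unique decomposition of a semisimple representation of the abelian group $W_{\BC}\cong\BC^{\times}$ into characters, together with parts (2) and (3) of Theorem \ref{llccplx}, gives well-definedness, surjectivity and injectivity by bookkeeping with multisets. The only delicate point, which you yourself flag, is the sphericity of the Langlands quotient: a $K$-fixed vector of $\mathrm{Ind}_{B}^{G}(\sig_{1},\dots,\sig_{n})$ does not automatically survive to a quotient, and one needs the standard fact that with dominantly ordered exponents the spherical subquotient of the unramified principal series is precisely the Langlands quotient --- but in the paper's formulation this is already packaged into part (2) of Theorem \ref{llccplx}, so your appeal to it is legitimate.
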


If $\vphi$ is given by $(0,t)$, the associated $L$-function and $\veps$-factor are given as follows
\begin{align*}
L(s,\vphi) &=  2(2\pi)^{-(s+t)}\Gam(s+t), \\
\veps(s,\pi,\psi) &= 1.
\end{align*}

For $\vphi$ reducible, $L(s,\vphi)$ and $\veps(s,\vphi,\psi)$ are the product of the $L$-functions and $\veps(s,\vphi,\psi)$ of the irreducible constituents of $\vphi$.

\subsection{Asymptotic of $1_{\rho,s}$ and $\Phi_{\psi,\rho,s}^{K}$ : Real Case}\label{archiproof}
Based on the local Langlands correspondence, we know that in order to study the asymptotic of $L$-functions, we need to study the asymptotic of $\Gam$ function, where
\begin{align*}
\Gam(z) = \int_{0}^{\infty}x^{z-1}e^{-x}dx.
\end{align*}

Here we recall the following estimation from \cite[1.18(6)]{hightransfun}, which can easily be derived from the classical Stirling formula.
\begin{thm}\label{estimateforgamma}
For fixed $x\in \BR$,
\begin{align*}
\Gam(x+iy) = \sqrt{2\pi}|y|^{x-\frac{1}{2}} e^{-x-\frac{|y|\pi}{2}}[1+O(\frac{1}{|y|})], \quad |y|\to \infty.
\end{align*}
\end{thm}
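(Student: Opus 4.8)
The plan is to read the estimate off the classical Stirling expansion of $\log\Gam$,
\begin{align*}
\log\Gam(z)=\Big(z-\tfrac12\Big)\log z-z+\tfrac12\log(2\pi)+O\Big(\tfrac1{|z|}\Big),\qquad |z|\to\infty,
\end{align*}
taken with the principal branch of $\log$ and valid uniformly in every sector $|\arg z|\le\pi-\del$. For a fixed $x\in\BR$ and $|y|$ large, $z=x+iy$ is bounded away from the poles of $\Gam$ and lies in such a sector (indeed $\arg z\to\pm\tfrac\pi2$ as $|y|\to\infty$), so the expansion is available all along the line $\Re z=x$, with the Stirling remainder of size $O(1/|z|)=O(1/|y|)$. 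Since $\Gam(\ol z)=\ol{\Gam(z)}$ we have $|\Gam(x+iy)|=|\Gam(x-iy)|$, so it is enough to treat $y\to+\infty$ and substitute $|y|$ for $y$ afterwards.

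First I would expand the right-hand side of Stirling's formula at $z=x+iy$ with $y>0$. Writing $\log z=\log|z|+i\arg z$, one uses the elementary expansions $\log|z|=\tfrac12\log(x^{2}+y^{2})=\log y+O(y^{-2})$ and $\arg z=\tfrac\pi2-\arctan(x/y)=\tfrac\pi2-\tfrac xy+O(y^{-3})$. Substituting these into $(z-\tfrac12)\log z-z+\tfrac12\log(2\pi)$ and isolating the real part — which equals $\log|\Gam(x+iy)|$ up to the $O(1/|y|)$ error — a short computation produces the logarithm of the asserted leading term, i.e. an expression of the form $(x-\tfrac12)\log y-\tfrac{\pi y}{2}+(\text{constant})+O(1/y)$. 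Exponentiating, absorbing the $O(1/y)$ into a multiplicative factor $1+O(1/y)$, and passing from $y$ to $|y|$ via the symmetry above completes the argument.

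I do not expect a genuine obstacle: the statement is in essence Stirling's formula restricted to vertical lines. The only point that needs attention is the bookkeeping of error terms — confirming that the $O(y^{-2})$ in $\log|z|$, the $O(y^{-3})$ in $\arg z$, and the $O(|z|^{-1})$ in Stirling's remainder all feed into a single multiplicative error of size $1+O(1/|y|)$, and that this remainder is uniform along $\Re z=x$ (it is, since that line stays inside a sector $|\arg z|\le\pi-\del$ once $|y|$ is large). It is also worth keeping in mind that the imaginary part of the exponent grows like $y\log y$, so the estimate is properly read as an estimate for $|\Gam(x+iy)|$, the unimodular phase being carried inside the $1+O(1/|y|)$ factor; this is exactly the form in which it will be applied to the $\Gam$-factors appearing in $L(s,\pi,\rho)$ and $\gam(s,\pi,\rho,\psi)$.
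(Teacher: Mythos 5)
Your route is the one the paper itself intends: the paper gives no proof of Theorem \ref{estimateforgamma}, it simply quotes \cite[1.18(6)]{hightransfun} and remarks that the estimate "can easily be derived from the classical Stirling formula", and your proposal is exactly that derivation, with the correct uniformity remarks (the line $\Re z=x$ stays in a sector $|\arg z|\le\pi-\del$, and the reflection $\Gam(\ol z)=\ol{\Gam(z)}$ reduces to $y\to+\infty$). However, one step of your write-up does not survive the computation you outline. Substituting $\log|z|=\log y+O(y^{-2})$ and $\arg z=\tfrac{\pi}{2}-\tfrac{x}{y}+O(y^{-3})$ into $\Re\big[(z-\tfrac12)\log z-z\big]+\tfrac12\log(2\pi)$ gives
\begin{align*}
(x-\tfrac12)\log y-y\Big(\tfrac{\pi}{2}-\tfrac{x}{y}\Big)-x+\tfrac12\log(2\pi)+O(1/y)
=(x-\tfrac12)\log y-\tfrac{\pi y}{2}+\tfrac12\log(2\pi)+O(1/y),
\end{align*}
because the $+x$ coming from $-y\arg z$ cancels the $-x$ coming from $-\Re z$. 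Thus Stirling yields $|\Gam(x+iy)|=\sqrt{2\pi}\,|y|^{x-\frac12}e^{-\pi|y|/2}\big(1+O(1/|y|)\big)$ with no factor $e^{-x}$; this is confirmed, for example, by $|\Gam(\tfrac12+iy)|^{2}=\pi/\cosh(\pi y)$. So the unspecified "(constant)" in your argument is $\tfrac12\log(2\pi)$, not $\tfrac12\log(2\pi)-x$, and your assertion that the computation "produces the logarithm of the asserted leading term" is not literally true: the formula as printed in the theorem (following the quoted source) carries a spurious $e^{-x}$. Since $x$ is held fixed, this is only a harmless multiplicative constant for every later use of the estimate (the proofs of Theorems \ref{asympforbasic} and \ref{asympforbasiccplx} only need bounds up to constants), but a correct proof must either fix the constant or state the discrepancy rather than claim exact agreement.

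A smaller point: you are right that the statement has to be read as an asymptotic for $|\Gam(x+iy)|$, but the unimodular phase cannot be "carried inside the $1+O(1/|y|)$ factor" --- $\arg\Gam(x+iy)$ grows like $y\log y$, so the phase factor is nowhere near $1$. The honest formulation, and the only one used in Sections \ref{archiproof} and \ref{archiproofcplx}, is the modulus statement $|\Gam(x+iy)|=\sqrt{2\pi}\,|y|^{x-\frac12}e^{-\pi|y|/2}\big(1+O(1/|y|)\big)$.
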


Then we give a proof for the following estimation for the derivatives of $\Gam$-function.
\begin{thm}\label{derivativeofgamma}
We have
\begin{align*}
\lim_{|z|\to \infty, |\arg z|<\pi} \frac{\Gam^{(n)}(z)}{\Gam(z)(\log z)^{n}} =1,
\end{align*}
where $\Gam^{(n)}(z)$ is the $n$-th derivative of $\Gam(z)$.
\end{thm}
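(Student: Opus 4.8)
The plan is to reduce the estimate to standard asymptotics for the logarithmic derivative of $\Gamma$. Write $h(z)=\log\Gamma(z)$, so that $h'(z)=\Gamma'(z)/\Gamma(z)$ is the digamma function. Stirling's formula
$$
h(z)=\Bigl(z-\tfrac12\Bigr)\log z-z+\tfrac12\log(2\pi)+O\!\left(\tfrac1{|z|}\right)
$$
holds as $|z|\to\infty$ uniformly on each sector $|\arg z|\le\pi-\delta$ (which is the sense in which the limit in the statement is to be understood, approach to the poles of $\Gamma$ on the negative real axis being excluded). Since an asymptotic expansion of a function holomorphic in a sector may be differentiated term by term, this yields
$$
h'(z)=\log z+O\!\left(\tfrac1{|z|}\right),\qquad h^{(k)}(z)=O\!\left(\tfrac1{|z|^{\,k-1}}\right)\quad(k\ge2),
$$
uniformly on $|\arg z|\le\pi-\delta$; equivalently one may just quote the classical asymptotic expansions of the polygamma functions. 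In particular $h'(z)/\log z\to1$ and $h^{(k)}(z)=o(\log z)$ for every $k\ge2$.

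Next I would express $\Gamma^{(n)}/\Gamma$ through $h$ and its derivatives. Setting $R_n:=\Gamma^{(n)}/\Gamma$, the recursion $R_0=1$, $R_{n+1}=R_n'+R_nh'$ (equivalently, the Bell polynomial identity $(e^{h})^{(n)}=e^{h}\,B_n(h',h'',\dots,h^{(n)})$) gives
$$
\frac{\Gamma^{(n)}(z)}{\Gamma(z)}=B_n\bigl(h'(z),h''(z),\dots,h^{(n)}(z)\bigr),
$$
where $B_n$ is the $n$-th complete Bell polynomial. Every monomial of $B_n(x_1,\dots,x_n)$ has the shape $c\prod_i x_{j_i}$ with all $j_i\ge1$ and $\sum_i j_i=n$; the unique monomial all of whose factors are $x_1$ is $x_1^{\,n}$, and any other monomial contains a factor $x_j$ with $j\ge2$ and therefore has at most $n-1$ factors equal to $x_1$.

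Substituting $x_j=h^{(j)}(z)$ and inserting the estimates above: the monomial $x_1^{\,n}$ contributes $h'(z)^n=(\log z)^n\bigl(1+o(1)\bigr)$, while each of the finitely many remaining monomials is a product of at most $n-1$ factors of size $O(\log z)$ with at least one factor of size $O(1/|z|)$, hence is $O\bigl(|\log z|^{\,n-1}/|z|\bigr)=o\bigl((\log z)^n\bigr)$. Consequently
$$
\frac{\Gamma^{(n)}(z)}{\Gamma(z)}=(\log z)^n\bigl(1+o(1)\bigr)\qquad(|z|\to\infty,\ |\arg z|\le\pi-\delta),
$$
and dividing by $(\log z)^n$ gives the claim. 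The two points that need care are the uniformity and term-by-term differentiability of the Stirling expansion in a sector — handled by the general theory of asymptotic expansions of holomorphic functions, or simply by citing the known expansions of $\Gamma'/\Gamma$ and the polygamma functions — and the elementary combinatorial bookkeeping that isolates $h'(z)^n$ as the dominant monomial of $B_n$; I expect this last step to be the main thing to write out carefully.
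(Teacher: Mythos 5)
Your proposal is correct, but it takes a genuinely different route from the paper. The paper argues by induction on $n$: writing $D_n=\Gamma^{(n)}/\Gamma$, it uses the same recursion you note ($D_{n+1}=D_n'+D_nD_1$), establishes the case $n=1$ from Stirling's formula, and then handles the extra term $D_n'/(\log z)^{n+1}$ by repeated appeals to L'H\^opital's rule in the complex variable $z$. Your argument instead solves the recursion in closed form via the Fa\`a di Bruno/complete Bell polynomial identity $\Gamma^{(n)}/\Gamma=B_n(h',\dots,h^{(n)})$ with $h=\log\Gamma$, and feeds in the polygamma asymptotics $h'(z)=\log z+O(1/|z|)$, $h^{(k)}(z)=O(|z|^{1-k})$ for $k\ge 2$, valid uniformly on sectors $|\arg z|\le\pi-\delta$; the combinatorial observation that every monomial of $B_n$ other than $(h')^n$ contains a factor $h^{(j)}$ with $j\ge2$ then isolates $(\log z)^n$ as the dominant term. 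What your approach buys is rigor at exactly the point where the paper is thinnest: L'H\^opital's rule does not apply as stated to complex-valued functions along arbitrary paths to infinity in a sector, and justifying those steps in the paper ultimately requires the same input you invoke, namely that Stirling-type asymptotic expansions of holomorphic functions may be differentiated in proper subsectors (or, equivalently, the classical expansions of the polygamma functions). You also correctly flag that the limit must be read uniformly on subsectors $|\arg z|\le\pi-\delta$ rather than over all of $|\arg z|<\pi$ (near the poles on the negative real axis the ratio is not controlled), which is harmless for the paper's application since there $\Re(z)$ stays bounded below. The only piece to write out carefully, as you say, is the bookkeeping of Bell monomials, and your bound $O(|\log z|^{n-1}/|z|)=o((\log z)^n)$ for the non-dominant monomials is sufficient.
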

\begin{proof}
We prove the theorem via induction.

Let $D_{n}(z) = \frac{\Gam^{(n)}(z)}{\Gam(z)}$. When $n=1$, using the classical Stirling formula, we have
\begin{align*}
\log \Gam(z) = \frac{1}{2}(\log(2\pi) - \log(z)) +z (\log z - 1) + O(\frac{1}{z})
\end{align*}
for any $|z|\to \infty, |\arg z|<\pi$.
Diving both sides by $z\log z$, we get
\begin{align*}
\lim_{|z|\to \infty, |\arg z|<\pi} \frac{\log \Gam(z)}{z\log z} = 1.
\end{align*}
By the L'H\^osptial's rule, we get
\begin{align*}
\lim_{|z|\to \infty, |\arg z|<\pi} \frac{D_{1}(z)}{ 1+\log z } =1.
\end{align*}
Hence we obtain
\begin{align*}
\lim_{|z|\to \infty, |\arg z|<\pi} \frac{D_{1}(z)}{\log z } =1.
\end{align*}
Therefore we complete the proof for $n=1$.

By definition, $\Gam(z)D_{n}(z) = \Gam^{(n)}(z)$. Taking derivative on both sides, we get
\begin{align*}
\Gam^{(1)}(z)D_{n}(z) +D^{\p}_{n}(z)\Gam(z) = \Gam^{(n+1)}(z).
\end{align*}
From this we can deduce the equality
\begin{align*}
D_{n+1}(z)  = D^{\p}_{n}(z)+D_{n}(z)D_{1}(z).
\end{align*}
Hence
\begin{align*}
\frac{D_{n+1}(z)}{(\log z)^{n+1}} = \frac{D^{\p}_{n}(z)}{(\log z)^{n+1}} + \frac{D_{n}(z)D_{1}(z)}{(\log z)^{n}(\log z)}.
\end{align*}

We assume that the limit
\begin{align*}
\lim_{|z|\to \infty, |\arg z|<\pi}\frac{D_{k}(z)}{(\log z)^{k}}=1,\quad 1\leq k\leq n
\end{align*}
holds.
To show that the limit formula holds for $k=n+1$, we only need to show that
\begin{align*}
\lim_{|z|\to \infty, |\arg z|<\pi}\frac{D^{\p}_{n}(z)}{ (\log z)^{n+1}} = 0.
\end{align*}
Now we have the formula
\begin{align*}
\lim_{|z|\to \infty, |\arg z|<\pi}\frac{D_{n}(z)}{(\log z)^{n}}=1.
\end{align*}
By the L'H\^ospital's rule, we get
\begin{align*}
\lim_{|z|\to \infty, |\arg z|<\pi} \frac{zD^{\p}_{n}(z)}{n(\log z)^{n-1}}= 1.
\end{align*}
Hence
\begin{align*}
\lim_{|z|\to \infty, |\arg z|<\pi} \frac{z(\log z)^{2}D^{\p}_{n}(z)}{n(\log z)^{n+1}}= 1,
\end{align*}
and we get
\begin{align*}
\lim_{|z|\to \infty, |\arg z|<\pi}\frac{D^{\p}_{n}(z)}{ (\log z)^{n+1}} = 0.
\end{align*}
Combining the above results we prove the theorem.
\end{proof}

Then we come to describe an explicit formula for $L(s,\pi_{\lam},\rho)$.

By definition, $\pi_{\lam}$ is induced from the character
\begin{align*}
m\exp(H)n \to e^{i\lam(H)}.
\end{align*}
If we assume that $\lam = (\lam_{1},...,\lam_{m})\in \Fa^{*}$, where $m$ is $1$ plus the semisimple rank of $G$, then its associated Langlands parameter is of the form
\begin{align*}
t\in W^{ab}_{\BR}\cong \BR^{\times} \to   \left( \begin{matrix} 
      |t|^{i\lam_{1}} &  			   &	&\\
       			   & |t|^{i\lam_{2}} &	&\\
       			   &			   &	&\\
       			   &			   &\ddots&\\
			   &			   &	&|t|^{i\lam_{m}}  \\
   \end{matrix} \right).
\end{align*}

Assume that $\rho$ has weights $\vpi_{1}, \vpi_{2},...,\vpi_{n}$, where $n=\dim (V_{\rho})$. Then the associated parameter for $\rho(\pi_{\lam})$, which is the functorial lifting image of $\pi_{\lam}$ along $\rho$, is
\begin{align*}
t\to   \left( \begin{matrix} 
      |t|^{i\vpi_{1}(\lam)} &  			   &	&\\
       			   & |t|^{i\vpi_{2}(\lam)} &	&\\
       			   &			   &	&\\
       			   &			   &\ddots&\\
			   &			   &	&|t|^{i\vpi_{n}(\lam)}  \\
   \end{matrix} \right),
\end{align*}
where $\vpi_{j}(\lam) = \sum_{k=1}^{m}n^{j}_{k}\lam_{k}$, $n_{k}\in \BZ_{\geq 0}$.

In the following, we need to use the following lemma on the representation $\rho$:
\begin{lem}\label{assumpweight}
We have the following inequality
\begin{align}\label{inequalityweight}
\sum_{k=1}^{n}|\vpi_{k}(x)| \geq C_{\rho}\sum_{t=1}^{m}|x_{t}|,\quad \textrm{for all } x=(x_{1},...,x_{m})\in \Fa^{*}
\end{align}
for some constant $C_{\rho}>0$.
\end{lem}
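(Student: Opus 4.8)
The plan is to recognize the left-hand side of (\ref{inequalityweight}) as a norm on the finite-dimensional real vector space $\Fa^{*}$ and then apply the equivalence of norms. Set $N(x)=\sum_{k=1}^{n}|\vpi_{k}(x)|$ for $x\in\Fa^{*}$. Since each weight $\vpi_{k}$ is a linear functional on $\Fa^{*}$, the function $N$ is a seminorm: nonnegative, positively homogeneous and subadditive. The right-hand side $\|x\|_{1}=\sum_{t=1}^{m}|x_{t}|$ is a genuine norm. Granting that $N$ is also a norm, equivalence of all norms on the finite-dimensional space $\Fa^{*}$ yields a constant $C_{\rho}>0$ with $N(x)\geq C_{\rho}\|x\|_{1}$ for every $x$; equivalently, one argues by compactness: $N$ is continuous and, being a norm, strictly positive on the compact unit sphere $\{x:\|x\|_{1}=1\}$, hence bounded below there by some $C_{\rho}>0$, and one rescales by homogeneity.

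So the only real content is the positive-definiteness of $N$, that is, that the weights $\vpi_{1},\dots,\vpi_{n}$ have no common zero in $\Fa^{*}$, or equivalently that they span the dual space $(\Fa^{*})^{*}\cong\Fa$. This is where the standing hypotheses on $\rho$ enter. The restriction $\rho\circ i$ of $\rho$ to the dual torus $\wh{T}$ is the direct sum of the characters $\vpi_{1},\dots,\vpi_{n}\in X^{*}(\wh{T})$; since $\rho$ is faithful, the common kernel $\bigcap_{k}\ker(\vpi_{k})$ of these characters is contained in $\ker(\rho)$ and hence trivial in $\wh{T}$. For a torus, characters with trivial common kernel span $X^{*}(\wh{T})\otimes_{\BZ}\BR$ (in fact they generate $X^{*}(\wh{T})$, as $\BC^{\times}$ contains roots of unity of every order), so under the identifications used in this section $\{\vpi_{k}\}$ spans a real space of dimension $\mathrm{rank}(\wh{T})=1+(\text{semisimple rank of }G)=m=\dim\Fa^{*}$, which is exactly $(\Fa^{*})^{*}$. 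Alternatively one can bypass faithfulness: irreducibility of $\rho$ already forces the $\vpi_{k}$ to span the codimension-one subspace of $(\Fa^{*})^{*}$ attached to $G_{0}$, while the prescribed behaviour $\rho\circ\wh{\sig}\colon z\mapsto z\,\Id$ on the central $\BG_{m}$ endows the $\vpi_{k}$ with a common nonzero component along the remaining coordinate; together these span everything.

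The sole obstacle is this spanning step; the norm-equivalence conclusion is then immediate. I therefore expect a short write-up: identify $N$ as a seminorm, invoke faithfulness (or irreducibility together with the central character) to see that the weights span $(\Fa^{*})^{*}$, deduce that $N$ is a norm, and finish with the standard finite-dimensional norm-equivalence (or compactness) argument to produce $C_{\rho}>0$.
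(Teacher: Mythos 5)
Your proof is correct and follows essentially the same route as the paper: faithfulness of $\rho$ forces the weights $\vpi_{1},\dots,\vpi_{n}$ to have no common zero on $\Fa^{*}$, and then homogeneity together with compactness of the unit sphere for $\sum_{t}|x_{t}|$ produces the constant $C_{\rho}>0$. The differences are only cosmetic: you carry out the no-common-zero step at the level of character lattices and phrase the conclusion as norm equivalence, while the paper passes to the Lie algebra (injectivity of the differential of $\rho$ on the maximal torus) and takes the extremum of $\sum_{k}|\vpi_{k}(x)|$ on the unit sphere directly --- where the relevant extremum is of course the minimum, not the maximum as written there.
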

\begin{proof}
The basic ingredient that we use is the fact that the representation $\rho$ is faithful.

We restrict $\rho$ to the split torus $(\BC^{\times})^{m}$ of $\LG$. Up to conjugation, we can view $\rho$ as an injective homorphism from $(\BC^{\times})^{m}$ to $(\BC^{\times})^{n}$, where $n=\dim (V_{\rho})$. Passing to Lie algebra, we get an injective homomorphism from $(\BC)^{m}$ to $(\BC)^{n}$, which is given by the direct sum of $\vpi_{k}$, $1\leq k\leq n$. Here each $\vpi_{k}$ can be viewed as a character of $(\BC)^{m}$.

We notice that the inequality (\ref{inequalityweight}) is invariant by scaling, and holds identically when $x = (x_{1},...,x_{m}) = 0$. Therefore in order to obtain the bound $C_{\rho}$, we can assume that 
$\sum_{t=1}^{m}|x_{t}| = 1$. In this case, the following function $f(x)$
$$
f(x) = \sum_{k=1}^{n}|\vpi_{k}(x)|, \quad \sum_{t=1}^{m}|x_{t}|=1
$$
is continuous. Using the fact that the equality $ \sum_{t=1}^{m}|x_{t}|=1$ defines a compact set in $(\BC)^{m}$, we notice that there exists $x\in (\BC)^{m}$ with the property $\sum_{t=1}^{m}|x_{t}|=1$, such that $f(x)$ is maximal. We let $C_{\rho}$ to be the maximum. 

Now if $C_{\rho}$ is equal to $0$, this means that $\vpi_{k}(x) = 0$ for all $1\leq k\leq n$. In particular, it means that the morphism $\rho$ is not injective when restricted to the Lie algebra $(\BC)^{m}$, which is a contraction.

It follows that $C_{\rho}>0$. This completes the proof.
\end{proof}

Now we are going to state our result on an asymptotic of $1_{\rho,s}$.
\begin{thm}\label{asympforbasic}
If $\Re(s)$ satisfies the following inequality
\begin{align*}
\Re(s)>\textrm{max}\{\vpi_{k}(\mu) |\quad 1\leq k\leq n,  \mu \in C^{\veps \rho_{B}}\},
\end{align*}
then $1_{\rho,s}$ belongs to  $S^{p}(K\bs G/K)$. Here $\veps = \frac{2}{p}-1$, $0<p\leq 2$, and $\{\vpi_{k}\}_{k=1}^{n}$ are the weights of the representation $\rho: \LG\to \GL(V_{\rho})$.
\end{thm}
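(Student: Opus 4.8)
The plan is to transport the problem to the Plancherel side via Theorem~\ref{isothm}, which identifies $S^{p}(K\bs G/K)$ topologically with $S(\Fa^{*}_{\veps})^{W}$ for $\veps=\frac{2}{p}-1$. So I would reduce the assertion to showing that $\lam\mapsto\CH(1_{\rho,s})(\lam)=L(s,\pi_{\lam},\rho)$ extends to an element of $S(\Fa^{*}_{\veps})^{W}$; in fact, once this is established one may simply \emph{define} $1_{\rho,s}$ to be $\CH^{-1}$ of this function. The first concrete step is to write $L(s,\pi_{\lam},\rho)$ explicitly. Using the Langlands correspondence of Section~\ref{sec:corrreal} together with the parameter of the functorial lift $\rho(\pi_{\lam})$ described above, one gets
\[
L(s,\pi_{\lam},\rho)=\prod_{j=1}^{n}\pi^{-\frac{s+i\vpi_{j}(\lam)}{2}}\,\Gam\!\left(\tfrac{s+i\vpi_{j}(\lam)}{2}\right).
\]
Writing $\lam=\xi+i\mu$ with $\xi\in\Fa^{*}$ and $\mu\in C^{\veps\rho_{B}}$, the $j$-th Gamma argument has real part $\tfrac12(\Re(s)-\vpi_{j}(\mu))$ and imaginary part $\tfrac12(\Im(s)+\vpi_{j}(\xi))$.

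Next I would dispatch the easy conditions. The hypothesis $\Re(s)>\max\{\vpi_{k}(\mu):1\le k\le n,\ \mu\in C^{\veps\rho_{B}}\}$, together with compactness of $C^{\veps\rho_{B}}$, forces each Gamma argument to have real part $\ge\del$ for some fixed $\del>0$, uniformly for $\lam\in\Fa^{*}_{\veps}$; since $\Gam$ is holomorphic on $\{\Re(z)\ge\del\}$ and $z\mapsto\pi^{-z/2}$ is entire, the product formula shows $L(s,\pi_{\lam},\rho)$ is holomorphic in the interior of $\Fa^{*}_{\veps}$ and that it and all its derivatives extend continuously (indeed smoothly) to $\Fa^{*}_{\veps}$, which is conditions (1) and (2) of the definition of $S(\Fa^{*}_{\veps})$. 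The $W$-invariance is immediate from $\pi_{\lam}\cong\pi_{w\lam}$ for $w\in W$, or equivalently from the $W$-stability of the multiset $\{\vpi_{j}(\lam)\}_{j}$.

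The remaining and main work is the rapid-decay condition (3), i.e. the finiteness of $\tau^{(\veps)}_{P,t}(L(s,\pi_{\bullet},\rho))$ for all $P,t$. On bounded regions $\{|\lam|\le R\}$ there is nothing to do, so I would let $|\lam|\to\infty$; since $\mu$ is confined to the compact set $C^{\veps\rho_{B}}$, this means $|\xi|\to\infty$, and here Lemma~\ref{assumpweight} (the faithfulness of $\rho$) enters: $\sum_{j=1}^{n}|\vpi_{j}(\xi)|\ge C_{\rho}\sum_{t}|\xi_{t}|$, hence $\sum_{j=1}^{n}|\Im(s)+\vpi_{j}(\xi)|\to\infty$ and in fact grows linearly in $|\lam|$. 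Applying the Stirling estimate of Theorem~\ref{estimateforgamma} to each of the $n$ Gamma factors, whose real parts stay in a fixed compact interval, would give a bound
\[
|L(s,\pi_{\lam},\rho)|\le P_{0}(\lam)\,\exp\!\left(-\tfrac{\pi}{4}\sum_{j=1}^{n}|\Im(s)+\vpi_{j}(\xi)|\right)
\]
for a suitable polynomial $P_{0}$, and the exponential factor beats $(1+|\lam|)^{-t}$ for every $t$. For a derivative $P(\partial/\partial\lam)L(s,\pi_{\lam},\rho)$, I would expand by the Leibniz rule into a finite sum of products in which some factors $\Gam(\tfrac{s+i\vpi_{j}(\lam)}{2})$ are replaced by $\Gam^{(m)}(\tfrac{s+i\vpi_{j}(\lam)}{2})$ and invoke Theorem~\ref{derivativeofgamma}, $\Gam^{(m)}(z)=\Gam(z)(\log z)^{m}(1+o(1))$, so that each such replacement costs only an extra polynomial-times-logarithmic factor that is again absorbed into $P_{0}$ without spoiling the exponential gain. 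This completes the verification that $\CH(1_{\rho,s})\in S(\Fa^{*}_{\veps})^{W}$, hence $1_{\rho,s}\in S^{p}(K\bs G/K)$. The hard part is this last paragraph: making the Stirling and derivative estimates uniform over $\lam\in\Fa^{*}_{\veps}$ and checking that the polynomial and logarithmic factors never overtake the linear exponential gain furnished by Lemma~\ref{assumpweight} — it is precisely the faithfulness of $\rho$ that makes this gain genuinely grow with $|\lam|$.
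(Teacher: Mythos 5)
Your proposal is correct and follows essentially the same route as the paper's proof: reduce via the spherical Plancherel isomorphism (Theorem \ref{isothm}) to showing $L(s,\pi_{\lam},\rho)\in S(\Fa^{*}_{\veps})^{W}$, then verify the seminorm bounds using the Stirling estimate (Theorem \ref{estimateforgamma}), the derivative estimate $\Gam^{(m)}(z)\sim\Gam(z)(\log z)^{m}$ (Theorem \ref{derivativeofgamma}), and the faithfulness inequality of Lemma \ref{assumpweight} to convert the exponential decay of the Gamma factors into decay linear in $|\lam|$, which absorbs all polynomial factors. The only cosmetic difference is that you spell out the holomorphy/continuity conditions and the uniformity issues slightly more explicitly than the paper does.
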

\begin{proof}
By definition
\begin{align*}
L(s,\pi_{\lam},\rho) =  \prod_{k=1}^{n}\pi^{-(\frac{s+i\vpi_{k}(\lam)}{2})}\Gam \Big( \frac{s+i\vpi_{k}(\lam)}{2}\Big).
\end{align*}

When $\Re(s)$ is sufficiently large, we want to show that the function $L(s,\pi_{\lam},\rho)$, as a function of $\lam$, lies in the space $S(\Fa^{*}_{\veps})^{W}$. The $W$-invariance of the function follows from the fact that
$\pi_{w\lam}\cong \pi_{\lam}$ for any $w\in W$. Therefore we only need to show the following semi-norm for $L(s,\pi_{\lam},\rho)$
\begin{align*}
\tau^{(\veps)}_{P,t}(L(s,\pi_{\lam},t)) = \sup_{\lam\in \Fa^{*}_{\veps}} (|\lam|+1)^{t} P(\frac{\partial}{\partial \lam})
L(s,\pi_{\lam},\rho)
\end{align*}
is finite if $\Re(s)$ is bigger than $\textrm{max}\{\vpi_{k}(\mu) | \quad 1\leq k\leq n,  \mu \in C^{\veps \rho_{B}}\}$. The reason that we need this bound is to prevent from touching the possible poles of $L(s,\pi_{\lam},\rho)$.

Now we are going to estimate
\begin{align*}
\sup_{\lam\in \Fa^{*}_{\veps}}(|\lam|+1)^{t}P(\frac{\partial}{\partial \lam})[\prod_{k=1}^{n}\pi^{-(\frac{s+i\vpi_{k}(\lam)}{2})}\Gam\Big(\frac{s+i\vpi_{k}(\lam)}{2}\Big)].
\end{align*}

The term
\begin{align*}
P(\frac{\partial}{\partial \lam})\pi^{-(\frac{s+i\vpi_{k}(\lam)}{2})}
\end{align*}
is dominated by
\begin{align*}
C_{1}(|\lam|+1)^{a}\pi^{-(\frac{s+i\vpi_{k}(\lam)}{2})}
\end{align*}
for some $a>0$ and constant $C_{1}>0$.

For the term
\begin{align*}
P(\frac{\partial}{\partial \lam})\Gam\Big(\frac{s+i\vpi_{k}(\lam)}{2}\Big),
\end{align*}
using Theorem \ref{derivativeofgamma} for the estimation on the derivative of $\Gam(z)$, it is dominated by
\begin{align*}
C_{2}(|\lam|+1)^{b}\Gam\Big(\frac{s+i\vpi_{k}(\lam)}{2}\Big)
\end{align*}
for some $b>0$ and some constant $C_{2}>0$. Here we use the fact that $\log (z)$ is dominated by $C(|z|+1)$ for some constant $C$ if $\Re(z)$ is bigger than $\textrm{max}\{\vpi_{k}(\mu) | \quad 1\leq k\leq n,  \mu \in C^{\veps \rho_{B}}\}$.

Hence we only need to show that the following term is bounded
\begin{align*}
\sup_{\lam\in \Fa^{*}_{\veps}}(|\lam|+1)^{t}\prod_{k=1}^{n}\pi^{-(\frac{s+i\vpi_{k}(\lam)}{2})}\Gam\Big(\frac{s+i\vpi_{k}(\lam)}{2}\Big).
\end{align*}

When $\lam\in \Fa^{*}_{\veps} = \Fa^{*}+iC^{\veps \rho }$, the real part of $\frac{s+i\vpi_{k}(\lam)}{2}$ is bounded and lies in a compact set, so the function $\pi^{-(\frac{s+i\vpi_{k}(\lam)}{2})}$ is always bounded.
Using Theorem \ref{estimateforgamma} for the estimation for $\Gam(x+iy)$ for $x\in \BR$ fixed, we have
\begin{align*}
&\sup_{\lam\in \Fa^{*}_{\veps}}(|\lam|+1)^{t}\prod_{k=1}^{n}\pi^{-(\frac{s+i\vpi_{k}(\lam)}{2})}\Gam\Big(\frac{s+i\vpi_{k}(\lam)}{2}\Big)\leq \\
&\sup_{\lam\in \Fa^{*}_{\veps}}C (|\lam|+1)^{t}(\sqrt{2\pi})^{n}
\prod_{k=1}^{n}[ |\frac{\Im(s)+\vpi_{k}(x)}{2}|^{\frac{\Re(s) -\vpi_{k}(y)-1}{2}}
\\
&\cdot e^{\frac{\vpi_{k}(y)}{2} -\frac{\Re(s)}{2}-\frac{|\Im(s)+\vpi_{k}(x)|\pi}{4}}]
\end{align*}
for some constant $C>0$. Here we write $\lam = x+iy$ with $x\in \Fa^{*}$, $y\in C^{\veps \rho}$.

Now we know that $s\in \BC$ is fixed, and $y$ lies in $C^{\veps\rho}$, which is a compact set. The term $\vpi_{k}(\lam)$ is also dominated by a polynomial function in $|\lam|+1$. Therefore up to a constant and a polynomial in $(|\lam|+1)$, we only need to evaluate the following term
\begin{align*}
\sup_{x\in \Fa^{*}}(|x|+1)^{t}\prod_{k=1}^{n}e^{-\frac{|\vpi_{k}(x)|\pi}{4}}.
\end{align*}
By Lemma \ref{assumpweight}, it is bounded by
\begin{align*}
\sup_{x\in \Fa^{*}}(|x|+1)^{t}\prod_{k=1}^{m}e^{-\frac{C_{\rho}|x_{k}|\pi}{4}}.
\end{align*}
which is bounded by a constant. This proves the theorem.
\end{proof}

\begin{rmk}
As mentioned in \cite{getz2015nonabelian}, by the recent work on Arthur-Selberg trace formula \cite{finis2011continuity} \cite{finis2016continuity} \cite{finis2011spectral}, the Arthur-Selberg trace formula is valid for functions in $\CS^{p}(K\bs G/K)$ whenever $0<p\leq 1$. Therefore our result gives an explicit bound of the parameter $s$ when the basic function $1_{\rho,s}$ can be plugged into the Arthur-Selberg trace formula.
\end{rmk}

We can also prove an asymptotic for $\Phi_{\psi,\rho,s}^{K}$.
By definition, the spherical component of $\Phi_{\psi,\rho,s}^{K}$ is determined via the following identity
\begin{align*}
\CH(\Phi_{\psi,\rho,s}^{K}) = \frac{L(1+s+\frac{l}{2},\pi,\rho)}{L(-s-\frac{l}{2},\pi^{\vee},\rho)}.
\end{align*}
Here we notice that if $\pi$ has Langlands parameter
\begin{align*}
t \to   \left( \begin{matrix} 
      |t|^{i\lam_{1}} &  			   &	&\\
       			   & |t|^{i\lam_{2}} &	&\\
       			   &			   &	&\\
       			   &			   &\ddots	&\\
			   &			   &	&|t|^{i\lam_{m}}  \\
   \end{matrix} \right),
\end{align*}
then $\pi^{\vee}$ has Langlands parameter
\begin{align*}
t\to   \left( \begin{matrix} 
      |t|^{-i\lam_{1}} &  			   &	&\\
       			   & |t|^{-i\lam_{2}} &	&\\
       			   &			   &	&\\
       			   &			   &\ddots	&\\
			   &			   &	&|t|^{-i\lam_{m}}  \\
   \end{matrix} \right).
\end{align*}

We first simplify the expression for $\gam$-factor by the functional equation of $\Gam(z)$.
\begin{lem}
The formula $\CH(\Phi_{\psi,\rho,s}^{K}) =
\frac{L(1+s+\frac{l}{2},\pi,\rho)}{L(-s-\frac{l}{2},\pi^{\vee},\rho)}$ can be simplified to be
\begin{align*}
\prod_{k=1}^{n} [\pi^{-(\frac{1}{2}+s+\frac{l}{2}+i\vpi_{k}(\lam)    ) }
& \Gam\Big(\frac{1+s+\frac{l}{2}+i\vpi_{k}(\lam)}{2} \Big)
 \\
 \frac{1}{\pi}\sin\Big(\pi(\frac{2+s+\frac{l}{2}+i\vpi_{k}(\lam)}{2})\Big)
&\Gam\Big(\frac{2+s+\frac{l}{2}+i\vpi_{k}(\lam)}{2}\Big)].
\end{align*}
\end{lem}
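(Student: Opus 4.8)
The plan is to reduce the asserted identity to the explicit archimedean $L$-factor formula recalled in Section~\ref{sec:corrreal} and then to Euler's reflection formula for $\Gam$. Write $\pi=\pi_{\lam}$ with $\lam=(\lam_{1},\dots,\lam_{m})\in\Fa^{*}$, extended to $\Fa^{*}_{\veps}$ by meromorphic continuation. As in the discussion preceding Lemma~\ref{assumpweight} and in the proof of Theorem~\ref{asympforbasic}, the Langlands parameter of $\rho(\pi_{\lam})$ is the direct sum of the one-dimensional parameters $(+,i\vpi_{k}(\lam))$, $1\le k\le n$; since $\pi_{\lam}^{\vee}\cong\pi_{-\lam}$ (as read off from the matrix displayed just before the lemma), the parameter of $\rho(\pi_{\lam}^{\vee})$ is the direct sum of the $(+,-i\vpi_{k}(\lam))$. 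Hence, by $L(s,(+,t))=\pi^{-(s+t)/2}\Gam\!\big((s+t)/2\big)$ and multiplicativity of $L$-factors,
\begin{align*}
L\Big(1+s+\tfrac{l}{2},\pi,\rho\Big)&=\prod_{k=1}^{n}\pi^{-\frac{1+s+\frac{l}{2}+i\vpi_{k}(\lam)}{2}}\Gam\Big(\tfrac{1+s+\frac{l}{2}+i\vpi_{k}(\lam)}{2}\Big),\\
L\Big(-s-\tfrac{l}{2},\pi^{\vee},\rho\Big)&=\prod_{k=1}^{n}\pi^{-\frac{-s-\frac{l}{2}-i\vpi_{k}(\lam)}{2}}\Gam\Big(\tfrac{-s-\frac{l}{2}-i\vpi_{k}(\lam)}{2}\Big).
\end{align*}

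Next I would form the quotient factor by factor. Collecting the powers of $\pi$ in the $k$-th factor gives exponent $-\big(\tfrac12+s+\tfrac{l}{2}+i\vpi_{k}(\lam)\big)$, which is the prefactor appearing in the statement. It remains to treat the ratio $\Gam\!\big(\tfrac{1+s+\frac{l}{2}+i\vpi_{k}(\lam)}{2}\big)\big/\Gam\!\big(\tfrac{-s-\frac{l}{2}-i\vpi_{k}(\lam)}{2}\big)$, and here the key input is Euler's reflection formula $\Gam(w)\Gam(1-w)=\pi/\sin(\pi w)$: applying it with $w=\tfrac{-s-\frac{l}{2}-i\vpi_{k}(\lam)}{2}$ turns $1/\Gam(w)$ into $\tfrac{1}{\pi}\sin(\pi w)\,\Gam(1-w)$, and $1-w=\tfrac{2+s+\frac{l}{2}+i\vpi_{k}(\lam)}{2}$. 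Finally, using $\sin(\pi w)=\sin(\pi(1-w))$ one rewrites the argument of the sine as $\pi\cdot\tfrac{2+s+\frac{l}{2}+i\vpi_{k}(\lam)}{2}$. Multiplying these factors over $1\le k\le n$ yields exactly the asserted product.

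The computation is essentially bookkeeping, so there is no serious obstacle; the only points demanding care are the correct identification of the parameter of $\pi^{\vee}$ (supplied by the matrix displayed just before the lemma, composed with $\rho$) and keeping track of the shifts by $1$, $\tfrac{l}{2}$ and the halving through the reflection formula. I would also remark that the claimed equality is an identity of meromorphic functions in $s$ (and in $\lam$), so invoking the reflection formula at the poles of the individual $\Gam$-factors causes no difficulty.
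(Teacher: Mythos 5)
Your proposal is correct and follows essentially the same route as the paper: write out both $L$-factors explicitly via the $(+,t)$-parameters of $\rho(\pi_{\lam})$ and $\rho(\pi_{\lam}^{\vee})$, collect the powers of $\pi$ into the exponent $-(\tfrac12+s+\tfrac{l}{2}+i\vpi_{k}(\lam))$, and convert the reciprocal $\Gam$-factor in the denominator using Euler's reflection formula. The only cosmetic difference is that you instantiate the reflection formula at $w=\tfrac{-s-\frac{l}{2}-i\vpi_{k}(\lam)}{2}$ and then use $\sin(\pi w)=\sin(\pi(1-w))$, whereas the paper applies it directly at $z=\tfrac{2+s+\frac{l}{2}+i\vpi_{k}(\lam)}{2}$; the computation is identical.
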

\begin{proof}
Using the definition of $L$-function, we have
\begin{align*}
\frac{L(1+s+\frac{l}{2},\pi,\rho)}{L(-s-\frac{l}{2},\pi^{\vee},\rho)}=
\frac{\prod_{k=1}^{n}\pi^{-\frac{1+s+\frac{l}{2}+i\vpi_{k}(\lam)}{2}} \Gam\big(\frac{1+s+\frac{l}{2}+i\vpi_{k}(\lam)}{2} \big) }
{\prod_{k=1}^{n}  \pi^{\frac{s+\frac{l}{2}+i\vpi_{k}(\lam)}{2}} \Gam\big(  -\frac{s+\frac{l}{2}+i\vpi_{k}(\lam)}{2}\big)       }
\\=
\prod_{k=1}^{n} \pi^{-(\frac{1}{2}+s+\frac{l}{2}+i\vpi_{k}(\lam)    ) }
\frac{ \Gam\big(\frac{1+s+\frac{l}{2}+i\vpi_{k}(\lam)}{2} \big)}{\Gam\big(  -\frac{s+\frac{l}{2}+i\vpi_{k}(\lam)}{2}\big) }.
\end{align*}
Using the functional equation for $\Gam(z)$
\begin{align*}
\Gam(z)\Gam(1-z) = \frac{\pi}{\sin(\pi z)},
\end{align*}
we obtain
\begin{align*}
\frac{1}{\Gam\big(-\frac{s+\frac{l}{2}+i\vpi_{k}(\lam)}{2}\big)} = \frac{1}{\pi}\sin\Big(\pi(\frac{2+s+\frac{l}{2}+i\vpi_{k}(\lam)}{2})\Big)
\Gam\Big(\frac{2+s+\frac{l}{2}+i\vpi_{k}(\lam)}{2}\Big).
\end{align*}
It follows that
\begin{align*}
\prod_{k=1}^{n} \pi^{-(\frac{1}{2}+s+\frac{l}{2}+i\vpi_{k}(\lam)    ) }
&\frac{ \Gam\big(\frac{1+s+\frac{l}{2}+i\vpi_{k}(\lam)}{2} \big)}{\Gam\big(  -\frac{s+\frac{l}{2}+i\vpi_{k}(\lam)}{2}\big) }
\\=
\prod_{k=1}^{n} [\pi^{-(\frac{1}{2}+s+\frac{l}{2}+i\vpi_{k}(\lam)    ) }
 &\Gam\Big(\frac{1+s+\frac{l}{2}+i\vpi_{k}(\lam)}{2}\Big)
 \\
\cdot \frac{1}{\pi}\sin\Big(\pi(\frac{2+s+\frac{l}{2}+i\vpi_{k}(\lam)}{2})\Big)
&\Gam\Big(\frac{2+s+\frac{l}{2}+i\vpi_{k}(\lam)}{2}\Big)].
\end{align*}
\end{proof}

We write $\lam= x+iy$ with $x\in \Fa^{*}$ and $y\in C^{\veps\rho_{B}}$, and we notice that the function $ \frac{1}{\pi}\sin\Big(\pi(\frac{2+s+\frac{l}{2}+i\vpi_{k}(\lam)}{2})\Big)$ is a Paley-Wiener function in $\lam$, hence lies in $\CS(\Fa^{*}_{\veps})$ as the space $\CS(\Fa^{*}_{\veps})$ contains all the Paley-Wiener functions. The function $\pi^{-(\frac{1}{2}+s+\frac{l}{2}+i\vpi_{k}(\lam)    ) }$ is bounded.
Then combining with Theorem \ref{asympforbasic} and the fact that $\CS(\Fa^{*}_{\veps})$ is a Fr\'echet algebra,
we know that if $\Re(s+1+\frac{l}{2})$ is bigger than $\textrm{max}\{\vpi_{k}(\mu) | \quad 1\leq k\leq n,  \mu \in C^{\veps \rho_{B}}\}$ and $\Re(s+2+\frac{l}{2})$ is bigger than $\textrm{max}\{\vpi_{k}(\mu) | 1\leq k\leq n,  \mu \in C^{\veps \rho_{B}}\}$, the function $\CH(\Phi_{\psi,\rho,s}^{K})$ lies in $\CS(\Fa^{*}_{\veps})$. Using the fact that $\pi_{\lam}\cong \pi_{w\lam}$ for $w\in W$, we know that $\CH(\Phi_{\psi,\rho,s}^{K})$ lies in $\CS(\Fa^{*}_{\veps})^{W}$.

In other words, we have proved the following asymptotic for $\Phi_{\psi,\rho,s}^{K}$
\begin{thm}
If $\Re(s)$ satisfies the following inequality
\begin{align*}
\Re(s)>-1-\frac{l}{2}+\textrm{max}\{\vpi_{k}(\mu) | \quad 1\leq k\leq n,  \mu \in C^{\veps \rho_{B}}\},
\end{align*}
then the function $\Phi_{\psi,\rho,s}^{K}$ lies in $\CS^{p}(K\bs G/K)$.
\end{thm}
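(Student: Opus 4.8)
The plan is to transfer the assertion to the spectral side through the spherical Plancherel isomorphism and then read it off the explicit product formula for $\CH(\Phi^{K}_{\psi,\rho,s})$ obtained in the preceding lemma. By Theorem \ref{isothm}, $\CH$ is a topological isomorphism of Fr\'echet algebras between $S^{p}(K\bs G/K)$ and $S(\Fa^{*}_{\veps})^{W}$, where $\veps=\tfrac{2}{p}-1$; hence it suffices to show that $\lam\mapsto\CH(\Phi^{K}_{\psi,\rho,s})(\lam)$ lies in $S(\Fa^{*}_{\veps})^{W}$.

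First I would regroup the lemma's formula. Using $\pi^{-(\frac{1}{2}+s+\frac{l}{2}+i\vpi_{k}(\lam))}=\pi\cdot\pi^{-\frac{1+s+l/2+i\vpi_{k}(\lam)}{2}}\cdot\pi^{-\frac{2+s+l/2+i\vpi_{k}(\lam)}{2}}$ together with $L(\sigma,\pi_{\lam},\rho)=\prod_{k=1}^{n}\pi^{-\frac{\sigma+i\vpi_{k}(\lam)}{2}}\Gam\big(\tfrac{\sigma+i\vpi_{k}(\lam)}{2}\big)$, the lemma rewrites as
\begin{align*}
\CH(\Phi^{K}_{\psi,\rho,s})(\lam)=\pi^{n}\,L\big(1+s+\tfrac{l}{2},\pi_{\lam},\rho\big)\cdot\Big(\prod_{k=1}^{n}\tfrac{1}{\pi}\sin\big(\pi\tfrac{2+s+l/2+i\vpi_{k}(\lam)}{2}\big)\Big)\cdot L\big(2+s+\tfrac{l}{2},\pi_{\lam},\rho\big).
\end{align*}
This displays $\CH(\Phi^{K}_{\psi,\rho,s})$ as a product of three kinds of pieces, each of which can be treated separately.

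Next I would handle the pieces in turn. For the two $L$-factors I would invoke Theorem \ref{asympforbasic}: since $\CH(1_{\rho,\sigma})(\lam)=L(\sigma,\pi_{\lam},\rho)$ by definition of the basic function, that theorem combined with Theorem \ref{isothm} says precisely that $\lam\mapsto L(\sigma,\pi_{\lam},\rho)$ lies in $S(\Fa^{*}_{\veps})^{W}$ whenever $\Re(\sigma)>\max\{\vpi_{k}(\mu)\,|\,1\le k\le n,\ \mu\in C^{\veps\rho_{B}}\}$. Applying this with $\sigma=1+s+\tfrac{l}{2}$ controls the first factor under exactly the hypothesis $\Re(s)>-1-\tfrac{l}{2}+\max\{\vpi_{k}(\mu)\}$, and with $\sigma=2+s+\tfrac{l}{2}$ controls the last factor under the weaker inequality, which the hypothesis implies. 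For the middle factor, each $\tfrac{1}{\pi}\sin(\pi\tfrac{2+s+l/2+i\vpi_{k}(\lam)}{2})$ is a Paley--Wiener function of $\lam$, so the finite product lies in $\CP(\Fa^{*}_{\BC})^{W}\subset S(\Fa^{*}_{\veps})^{W}$, its $W$-invariance coming from the fact that $W$ permutes the weights $\vpi_{1},\dots,\vpi_{n}$ (equivalently, $\pi_{w\lam}\cong\pi_{\lam}$). Since $S(\Fa^{*}_{\veps})^{W}$ is a Fr\'echet algebra under pointwise multiplication, the product of the three factors (times the scalar $\pi^{n}$) again lies in $S(\Fa^{*}_{\veps})^{W}$, and applying $\CH^{-1}$ yields $\Phi^{K}_{\psi,\rho,s}\in S^{p}(K\bs G/K)$.

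The actual analytic content is already packaged in Theorem \ref{asympforbasic} — the Stirling asymptotics of $\Gam$ and of its derivatives along vertical lines (Theorems \ref{estimateforgamma} and \ref{derivativeofgamma}) converted into rapid decay in $\lam$ via the faithfulness estimate of Lemma \ref{assumpweight}. Here that work is reused essentially verbatim after a shift of the argument, so the only things that genuinely need checking are the elementary bookkeeping that the shifted condition on $\Re(s)$ keeps the arguments of all the $\Gam$-factors away from their poles — which is exactly why the bound $-1-\tfrac{l}{2}+\max\{\vpi_{k}(\mu)\}$ arises — and the assertion that the $\sin$-factors behave like Paley--Wiener functions on $\Fa^{*}_{\veps}$; the latter is the step on which I would focus the closest scrutiny.
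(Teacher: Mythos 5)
Your proposal is correct and follows essentially the same route as the paper: the paper likewise passes through the Plancherel isomorphism (Theorem \ref{isothm}), controls the two shifted $L$-factors by Theorem \ref{asympforbasic} (requiring $\Re(s+1+\frac{l}{2})$ and $\Re(s+2+\frac{l}{2})$ to exceed $\max\{\vpi_{k}(\mu)\}$), treats the factors $\frac{1}{\pi}\sin(\cdot)$ as Paley--Wiener elements of $S(\Fa^{*}_{\veps})$ and the powers of $\pi$ as bounded, and concludes via the Fr\'echet-algebra structure of $S(\Fa^{*}_{\veps})^{W}$ together with $W$-invariance from $\pi_{w\lam}\cong\pi_{\lam}$. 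The step you flag for closest scrutiny --- that the sine factors belong to $S(\Fa^{*}_{\veps})$ --- is asserted in exactly the same unproved form in the paper's own proof, so your attempt reproduces the paper's argument, including that point.
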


We can also show that the Fourier transform $\CF_{\rho}$ preserves $1_{\rho,-\frac{l}{2}}$. The proof is just the same as the $p$-adic case by verifying that they have the same image under spherical Plancherel transform.

\begin{rmk}
We make a remark on the function space $\CS_{\rho}(G,K)$. In \cite{gjzeta}, the authors defined the space $\CS_{\std}(G)$ to be the derivatives of the basic function $1_{\std}$, which is not the restriction of the classical Schwartz functions on $\RM_{n}$ to $G$. Using the classical theory of Fourier transform, one can show that $\CS_{\std}(G)$ is fixed by $\CF_{\std}$. Moreover, using Casselman's subrepresentation theorem \cite{subrepresentation}, one can show that the function space $\CS_{\std}(G)$ is enough for us to obtain the standard $L$-factors.

Let $\BC[\Fg]$ be the polynomial ring on $\Fg$ and let $U(\Fg)$ be the universal enveloping algebra of $\Fg$.
Since $\CS_{\std}(G)$ is invariant under multiplication by $\BC[\Fg]$ and $U(\Fg)$, the function space $\CS_{\std}(G)$ is a Weyl algebra module, which means that the space $\CS_{\std}(G)$ has a nice algebraic structure. It seems that $\CS_{\std}(G)$ defined in \cite{gjzeta} does not carry any natural topological structure. In general, we might hope that our function space $\CS_{\rho}(G)$ carries natural topological structure like the Fr\'echet topology on classical Schwartz space.

On the other hand, one may ask why we do not set up our space $\CS_{\rho}(G,K)$ to be just $1_{\rho,-\frac{l}{2}}*C^{\infty}_{c}(G,K)$ as in $p$-adic case. Here we notice that the $L$-factor cannot be written as the fraction of two functions in the Paley-Wiener space $\CP(\Fa^{*}_{\BC})$, since the function $\Gam(z)$ satisfies the limit
\begin{align*}
\lim_{|z|\to \infty, |\arg z|<\pi}\frac{\Gam(z)}{e^{z\log z}}=1.
\end{align*}
In other words, the function space $L(-\frac{l}{2},\pi_{\lam},\rho)\CP(\Fa^{*}_{\BC})$ does not contain $\CP(\Fa^{*}_{\BC})$ as a proper subspace. We can define $\CS_{\rho}(G,K)$ to be the space of functions generated additively by $1_{\rho,-\frac{l}{2}}$, $C_{c}^{\infty}(G,K)$ and $\CF_{\rho}(C^{\infty}_{c}(G,K))$. Then $\CS_{\rho}(G,K)$ naturally contains $1_{\rho,-\frac{l}{2}}$ and is fixed by $\CF_{\rho}$, but the algebraic and topological structure is not clear as the $p$-adic case.
\end{rmk}

\subsection{Asymptotic of $1_{\rho,s}$ and $\Phi_{\psi,\rho,s}^{K}$ : Complex Case}\label{archiproofcplx}
Following the proof in the real case, we describe an explicit formula for $L(s,\pi_{\lam},\rho)$.

By definition, $\pi_{\lam}$ is induced from the character
\begin{align*}
m\exp(H)n \to e^{i\lam(H)}.
\end{align*}
If we assume that $\lam = (\lam_{1},...,\lam_{m})\in \Fa^{*}$, where $m$ is $1$ plus the semisimple rank of $G$, then its associated Langlands parameter is of the form
\begin{align*}
t\in W_{\BC}\cong \BC^{\times} \to   \left( \begin{matrix} 
      |t|^{i\lam_{1}} &  			   &	&\\
       			   & |t|^{i\lam_{2}} &	&\\
       			   &			   &	&\\
       			   &			   &\ddots&\\
			   &			   &	&|t|^{i\lam_{m}}  \\
   \end{matrix} \right)
\\=
 \left( \begin{matrix} 
      |t|_{\BC}^{\frac{i\lam_{1}}{2}} &  			   &	&\\
       			   & |t|_{\BC}^{\frac{i\lam_{2}}{2}} &	&\\
       			   &			   &	&\\
       			   &			   &\ddots&\\
			   &			   &	&|t|_{\BC}^{\frac{i\lam_{m}}{2}}  \\
   \end{matrix} \right)
   .
\end{align*}

Assume that $\rho$ has weights $\vpi_{1}, \vpi_{2},...,\vpi_{n}$, where $n=\dim (V_{\rho})$. Then the associated parameter for $\rho(\pi_{\lam})$, which is the functorial lifting image of $\pi_{\lam}$ along $\rho$, is
\begin{align*}
t\to   \left( \begin{matrix} 
      |t|_{\BC}^{\frac{i\vpi_{1}(\lam)}{2}} &  			   &	&\\
       			   & |t|_{\BC}^{\frac{i\vpi_{2}(\lam)}{2}} &	&\\
       			   &			   &	&\\
       			   &			   &\ddots&\\
			   &			   &	&|t|_{\BC}^{\frac{i\vpi_{n}(\lam)}{2}}  \\
   \end{matrix} \right),
\end{align*}
where $\vpi_{j}(\lam) = \sum_{k=1}^{m}n^{j}_{k}\lam_{k}$, $n_{k}\in \BZ_{\geq 0}$.

Now we are going to state our result on an asymptotic of $1_{\rho,s}$.
\begin{thm}\label{asympforbasiccplx}
If $\Re(s)$ satisfies the following inequality
\begin{align*}
\Re(s)>\textrm{max}\{\frac{\vpi_{k}(\mu)}{2} |\quad 1\leq k\leq n,  \mu \in C^{\veps \rho_{B}}\},
\end{align*}
then $1_{\rho,s}$ belongs to  $S^{p}(K\bs G/K)$. Here $\veps = \frac{2}{p}-1$, $0<p\leq 2$, and $\{\vpi_{k}\}_{k=1}^{n}$ are the weights of the representation $\rho: \LG\to \GL(V_{\rho})$.
\end{thm}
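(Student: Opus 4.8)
The plan is to mimic the proof of Theorem \ref{asympforbasic} in the real case, the only substantive changes being the shape of the archimedean $L$-factor over $\BC$ and the resulting factor of $\tfrac{1}{2}$ in the exponents. By the spherical Plancherel isomorphism (Theorem \ref{isothm}), since $\CH(1_{\rho,s})(\lam) = L(s,\pi_\lam,\rho)$, it suffices to prove that $L(s,\pi_\lam,\rho)$, viewed as a function of $\lam$, lies in $S(\Fa^*_\veps)^W$ whenever $\Re(s)$ exceeds the stated bound. The $W$-invariance is immediate from $\pi_{w\lam}\cong\pi_\lam$, so the real content is the finiteness of the seminorms $\tau^{(\veps)}_{P,t}$.

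First I would record the explicit formula. From the Langlands parameter of $\rho(\pi_\lam)$ computed above and the complex local factor $L(s,(0,t)) = 2(2\pi)^{-(s+t)}\Gam(s+t)$, one obtains
\begin{align*}
L(s,\pi_\lam,\rho) = \prod_{k=1}^{n} 2(2\pi)^{-\left(s + \frac{i\vpi_k(\lam)}{2}\right)}\Gam\left(s + \frac{i\vpi_k(\lam)}{2}\right).
\end{align*}
Writing $\lam = x + iy$ with $x\in\Fa^*$ and $y\in C^{\veps\rho_B}$, the argument of the $k$-th $\Gam$-factor is $\left(\Re(s) - \tfrac{1}{2}\vpi_k(y)\right) + i\left(\Im(s) + \tfrac{1}{2}\vpi_k(x)\right)$; since $y$ runs over the compact set $C^{\veps\rho_B}$, the real part stays in a compact interval, and the hypothesis $\Re(s) > \max\{\vpi_k(\mu)/2 : 1\leq k\leq n,\ \mu\in C^{\veps\rho_B}\}$ forces this real part to be strictly positive, so we stay away from the poles of $\Gam$ — this is exactly where the threshold enters.

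The remaining estimates then follow the real case verbatim. Applying $P(\partial/\partial\lam)$ to a factor $(2\pi)^{-(s+i\vpi_k(\lam)/2)}$ yields a polynomial in $|\lam|+1$ times the same exponential, whose modulus depends only on $\Re(s)$ and the compact parameter $y$ and is therefore bounded on $\Fa^*_\veps$; applying $P(\partial/\partial\lam)$ to $\Gam(s+i\vpi_k(\lam)/2)$ is, by Theorem \ref{derivativeofgamma} together with the bound $|\log z|\leq C(|z|+1)$ valid once $\Re(z)>0$ is bounded below, controlled by a polynomial in $|\lam|+1$ times $\Gam(s+i\vpi_k(\lam)/2)$ itself. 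So everything reduces, up to polynomial factors, to bounding $\sup_{\lam}(|\lam|+1)^t\prod_{k=1}^{n}\left|\Gam\left(s+\tfrac{i\vpi_k(\lam)}{2}\right)\right|$. By Theorem \ref{estimateforgamma} each factor is $O\!\left((|\lam|+1)^{a}e^{-\frac{\pi}{2}|\Im(s)+\vpi_k(x)/2|}\right)$ for suitable $a$, so the product decays like $\prod_{k=1}^{n}e^{-\frac{\pi}{4}|\vpi_k(x)|}$ up to polynomial factors, and by Lemma \ref{assumpweight} this is $O\!\left(\prod_{t=1}^{m}e^{-\frac{\pi C_\rho}{4}|x_t|}\right)$, which dominates any polynomial in $|\lam|+1$ since $|y|$ stays bounded. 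Hence every $\tau^{(\veps)}_{P,t}$ is finite, $L(s,\pi_\lam,\rho)\in S(\Fa^*_\veps)^W$, and $1_{\rho,s}\in S^p(K\bs G/K)$.

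I do not anticipate a genuine obstacle: the proof is essentially a transcription of the real case. The only points needing care are the bookkeeping of the $\tfrac12$ in $i\vpi_k(\lam)/2$ coming from $|z|_\BC=|z|^2$ — which is precisely what converts the real-case threshold $\max\{\vpi_k(\mu)\}$ into $\max\{\vpi_k(\mu)/2\}$ — and checking that the prefactor $(2\pi)^{-(s+i\vpi_k(\lam)/2)}$, unlike the real $\pi^{-(s+i\vpi_k(\lam))/2}$, is still bounded on $\Fa^*_\veps$, which holds for the same reason as before.
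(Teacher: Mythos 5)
Your proposal is correct and follows essentially the same route as the paper's proof: passing through the spherical Plancherel isomorphism, writing $L(s,\pi_\lam,\rho)=\prod_{k=1}^{n}2(2\pi)^{-(s+\frac{i\vpi_{k}(\lam)}{2})}\Gam\big(s+\tfrac{i\vpi_{k}(\lam)}{2}\big)$ (the paper's $\Gam\big(\tfrac{2s+i\vpi_k(\lam)}{2}\big)$), controlling derivatives via Theorem \ref{derivativeofgamma}, the prefactor boundedness, Theorem \ref{estimateforgamma}, and finally Lemma \ref{assumpweight} for the exponential decay in $x$, with the threshold $\max\{\vpi_k(\mu)/2\}$ arising exactly as you say from the pole avoidance after the $|z|_{\BC}=|z|^2$ normalization. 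No gaps; this is the paper's argument.
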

\begin{proof}
By definition
\begin{align*}
L(s,\pi_{\lam},\rho) =  \prod_{k=1}^{n}2(2\pi)^{-(\frac{2s+i\vpi_{k}(\lam)}{2})}\Gam\Big(\frac{2s+i\vpi_{k}(\lam)}{2}\Big).
\end{align*}

When $\Re(s)$ is sufficiently large, we want to show that the function $L(s,\pi_{\lam},\rho)$, as a function of $\lam$, lies in the space $S(\Fa^{*}_{\veps})^{W}$. The $W$-invariance of the function follows from the fact that
$\pi_{w\lam}\cong \pi_{\lam}$ for any $w\in W$. Therefore we only need to show the following semi-norm for $L(s,\pi_{\lam},\rho)$
\begin{align*}
\tau^{(\veps)}_{P,t}(L(s,\pi_{\lam},t)) = \sup_{\lam\in \Fa^{*}_{\veps}} (|\lam|+1)^{t} P(\frac{\partial}{\partial \lam})
L(s,\pi_{\lam},\rho)
\end{align*}
is finite if $\Re(s)$ is bigger than $\textrm{max}\{\frac{\vpi_{k}(\mu)}{2} | \quad 1\leq k\leq n,  \mu \in C^{\veps \rho_{B}}\}$.

Now we are going to estimate
\begin{align*}
\sup_{\lam\in \Fa^{*}_{\veps}}(|\lam|+1)^{t}P(\frac{\partial}{\partial \lam})[\prod_{k=1}^{n}
(2\pi)^{-(\frac{2s+i\vpi_{k}(\lam)}{2})}\Gam\Big(\frac{2s+i\vpi_{k}(\lam)}{2}\Big)].
\end{align*}
The estimation is almost the same as the real case.

The term
\begin{align*}
P(\frac{\partial}{\partial \lam})(2\pi)^{-(\frac{2s+i\vpi_{k}(\lam)}{2})}
\end{align*}
is dominated by
\begin{align*}
C_{1}(|\lam|+1)^{a}(2\pi)^{-(\frac{2s+i\vpi_{k}(\lam)}{2})}
\end{align*}
for some $a>0$ and constant $C_{1}>0$.

For the term
\begin{align*}
P(\frac{\partial}{\partial \lam})\Gam\Big(\frac{2s+i\vpi_{k}(\lam)}{2}\Big),
\end{align*}
using Theorem \ref{derivativeofgamma} for the estimation on the derivative of $\Gam(z)$, it is dominated by
\begin{align*}
C_{2}(|\lam|+1)^{b}\Gam\Big(\frac{2s+i\vpi_{k}(\lam)}{2}\Big)
\end{align*}
for some $b>0$ and some constant $C_{2}>0$. Here we use the fact that $\log (z)$ is dominated by $C(|z|+1)$ for some constant $C$ if $\Re(z)$ is bigger than $\textrm{max}\{\frac{\vpi_{k}(\mu)}{2} | \quad 1\leq k\leq n,  \mu \in C^{\veps \rho_{B}}\}$.

Hence we only need to show that the following term is bounded
\begin{align*}
\sup_{\lam\in \Fa^{*}_{\veps}}(|\lam|+1)^{t}\prod_{k=1}^{n}(2\pi)^{-(\frac{2s+i\vpi_{k}(\lam)}{2})}\Gam\Big(\frac{2s+i\vpi_{k}(\lam)}{2}\Big).
\end{align*}

When $\lam\in \Fa^{*}_{\veps} = \Fa^{*}+iC^{\veps \rho }$, the real part of $\frac{2s+i\vpi_{k}(\lam)}{2}$ is bounded and lies in a compact set, so the function $(2\pi)^{-(\frac{2s+i\vpi_{k}(\lam)}{2})}$ is always bounded.
Using Theorem \ref{estimateforgamma} for the estimation for $\Gam(x+iy)$ for $x\in \BR$ fixed, we have
\begin{align*}
&\sup_{\lam\in \Fa^{*}_{\veps}}(|\lam|+1)^{t}\prod_{k=1}^{n}(2\pi)^{-(\frac{2s+i\vpi_{k}(\lam)}{2})}\Gam\Big(\frac{2s+i\vpi_{k}(\lam)}{2}\Big)\leq \\
&\sup_{\lam\in \Fa^{*}_{\veps}}C (|\lam|+1)^{t}(\sqrt{2\pi})^{n}
\prod_{k=1}^{n}[ |\frac{2\Im(s)+\vpi_{k}(x)}{2}|^{\frac{2\Re(s) -\vpi_{k}(y)-1}{2}}
\\
&\cdot e^{\frac{\vpi_{k}(y)}{2} -\frac{2\Re(s)}{2}-\frac{|2\Im(s)+\vpi_{k}(x)|\pi}{4}}]
\end{align*}
for some constant $C>0$. Here we write $\lam = x+iy$ with $x\in \Fa^{*}$, $y\in C^{\veps \rho}$.

Now we know that $s\in \BC$ is fixed, and $y$ lies in $C^{\veps\rho}$, which is a compact set. The term $\vpi_{k}(\lam)$ is also dominated by a polynomial function in $|\lam|+1$. Therefore up to a constant and a polynomial in $(|\lam|+1)$, we only need to evaluate the following term
\begin{align*}
\sup_{x\in \Fa^{*}}(|x|+1)^{t}\prod_{k=1}^{n}e^{-\frac{|\vpi_{k}(x)|\pi}{4}}.
\end{align*}
By Lemma \ref{assumpweight}, it is bounded by
\begin{align*}
\sup_{x\in \Fa^{*}}(|x|+1)^{t}\prod_{k=1}^{m}e^{-\frac{C_{\rho}|x_{k}|\pi}{4}}.
\end{align*}
which is bounded by a constant. This proves the theorem.
\end{proof}

We can also prove an asymptotic for $\Phi_{\psi,\rho,s}^{K}$.
By definition, the spherical component of $\Phi_{\psi,\rho,s}^{K}$ is determined via the following identity
\begin{align*}
\CH(\Phi_{\psi,\rho,s}^{K}) = \frac{L(1+s+\frac{l}{2},\pi,\rho)}{L(-s-\frac{l}{2},\pi^{\vee},\rho)}.
\end{align*}
Here we notice that if $\pi$ has Langlands parameter
\begin{align*}
t \to   \left( \begin{matrix} 
      |t|^{i\lam_{1}} &  			   &	&\\
       			   & |t|^{i\lam_{2}} &	&\\
       			   &			   &	&\\
       			   &			   &\ddots	&\\
			   &			   &	&|t|^{i\lam_{m}}  \\
   \end{matrix} \right),
\end{align*}
then $\pi^{\vee}$ has Langlands parameter
\begin{align*}
t\to   \left( \begin{matrix} 
      |t|^{-i\lam_{1}} &  			   &	&\\
       			   & |t|^{-i\lam_{2}} &	&\\
       			   &			   &	&\\
       			   &			   &\ddots	&\\
			   &			   &	&|t|^{-i\lam_{m}}  \\
   \end{matrix} \right).
\end{align*}

We first simplify the expression for $\gam$-factor
\begin{lem}
The formula $\CH(\Phi_{\psi,\rho,s}^{K}) =
\frac{L(1+s+\frac{l}{2},\pi,\rho)}{L(-s-\frac{l}{2},\pi^{\vee},\rho)}$ can be simplified to be
\begin{align*}
\prod_{k=1}^{n} [\pi^{-(\frac{1}{2}+2s+\frac{l}{2}+i\vpi_{k}(\lam)    ) }
& \Gam\Big(\frac{1+2s+\frac{l}{2}+i\vpi_{k}(\lam)}{2} \Big)
 \\
 \frac{1}{\pi}\sin\Big(\pi(\frac{2+2s+\frac{l}{2}+i\vpi_{k}(\lam)}{2})\Big)
&\Gam\Big(\frac{2+2s+\frac{l}{2}+i\vpi_{k}(\lam)}{2}\Big)].
\end{align*}
\end{lem}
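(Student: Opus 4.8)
The plan is to mirror, almost verbatim, the proof of the preceding lemma for the real case, substituting the explicit shape of the $\GL_n(\BC)$ archimedean $L$-factor recorded in Section~\ref{sec:corrcplx}. First I would expand both shifted $L$-factors using the Langlands parametrization of $\pi_\lam$: its parameter sends $t$ to $\diag(|t|^{i\lam_1},\dots,|t|^{i\lam_m})$, and after functorial transfer along $\rho$ the $k$-th entry becomes $|t|_{\BC}^{i\vpi_k(\lam)/2}$, so that by the formula $L(s,\vphi)=2(2\pi)^{-(s+t)}\Gam(s+t)$ for a one-dimensional parameter $(0,t)$ one obtains a product over $k$ of $2(2\pi)^{-(\,\cdot\,)}\Gam(\,\cdot\,)$ whose argument is a linear function of $s$, $l$, and $\vpi_k(\lam)$. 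For the denominator I would use that $\pi^{\vee}$ carries the conjugate Langlands parameter $t\mapsto\diag(|t|^{-i\lam_j})$, i.e.\ that $\vpi_k(\lam)$ is replaced by $-\vpi_k(\lam)$ throughout.

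Next I would cancel the common data: the factors $2$ disappear, the powers of $2\pi$ (equivalently of $\pi$, after absorbing the $2$'s) add up to the exponent $-(\tfrac12+2s+\tfrac{l}{2}+i\vpi_k(\lam))$ appearing in the statement, and there remains, for each $k$, a single ratio $\Gam(a_k)/\Gam(b_k)$ in which $b_k$ equals $1$ minus $a_k$ up to the half-shift forced by the $\pi$-versus-$\pi^{\vee}$ asymmetry. The only analytic ingredient is then the reflection formula $\Gam(z)\Gam(1-z)=\pi/\sin(\pi z)$ — the very one already invoked in the real case — applied with $z=b_k$: it rewrites $1/\Gam(b_k)$ as $\tfrac{1}{\pi}\sin(\pi(1-b_k))\,\Gam(1-b_k)$, which produces the second $\Gam$-factor $\Gam\!\big(\tfrac{2+2s+\frac{l}{2}+i\vpi_k(\lam)}{2}\big)$ together with the $\sin$-factor. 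Taking the product over $k$ yields the displayed expression. Should the $\BC$-normalization happen to collapse the two arguments into a single one, I would instead invoke the Legendre duplication formula $\Gam(w)\Gam(w+\tfrac12)=2^{1-2w}\sqrt{\pi}\,\Gam(2w)$ to split it, but I expect this not to be needed since the half-shift is already present, exactly as in the real case.

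The computation is entirely formal; the one point requiring care — and hence the main potential pitfall — is propagating the extra factor $\tfrac12$ coming from the normalization $|z|_{\BC}=z\wb{z}=|z|^{2}$ consistently through the exponent of $\pi$ and inside every $\Gam$-argument, so that the numerator argument $a_k$ and the denominator argument $b_k$ differ by exactly the right amount for the reflection formula to land on $\Gam\!\big(\tfrac{2+2s+\frac{l}{2}+i\vpi_k(\lam)}{2}\big)$. Once the conventions are pinned down, the identity reduces to a one-line verification for each index $k$, and the lemma follows by multiplying over $k$.
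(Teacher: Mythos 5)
Your strategy---expand both shifted $L$-factors over the weights of $\rho$, use that $\pi^{\vee}$ carries the parameter with $\vpi_{k}(\lam)$ replaced by $-\vpi_{k}(\lam)$, combine the constants, and apply the reflection formula $\Gam(z)\Gam(1-z)=\pi/\sin(\pi z)$ to the denominator---is exactly the paper's route. The problem is that the bookkeeping you assert does not follow from the normalization you commit to, and that bookkeeping is precisely what is needed to land on the displayed formula. With $L(s,\vphi)=2(2\pi)^{-(s+t)}\Gam(s+t)$ and $t_{k}=\tfrac{i\vpi_{k}(\lam)}{2}$, the $k$-th factor of the ratio is $(2\pi)^{-(1+2s+l+i\vpi_{k}(\lam))}\,\Gam(A_{k})/\Gam(1-A_{k})$ with $A_{k}=1+s+\tfrac{l}{2}+\tfrac{i\vpi_{k}(\lam)}{2}$: the numerator and denominator arguments sum to $1$ \emph{exactly}, so reflection gives $\tfrac{1}{\pi}\sin(\pi A_{k})\,\Gam(A_{k})^{2}$---the ``collapse'' case you dismiss as not occurring---and the prefactor is a power of $2\pi$ with exponent $-(1+2s+l+i\vpi_{k}(\lam))$, not $\pi^{-(\frac{1}{2}+2s+\frac{l}{2}+i\vpi_{k}(\lam))}$. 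Your fallback does not repair this: Legendre duplication splits $\Gam(2w)$, not $\Gam(w)^{2}$, and in any case cannot produce the two displayed arguments $\tfrac{1+2s+\frac{l}{2}+i\vpi_{k}(\lam)}{2}$ and $\tfrac{2+2s+\frac{l}{2}+i\vpi_{k}(\lam)}{2}$; writing $B_{k}=\tfrac{1+2s+\frac{l}{2}+i\vpi_{k}(\lam)}{2}$ one has $2B_{k}=2A_{k}-1-\tfrac{l}{2}$, so $\Gam(B_{k})\Gam(B_{k}+\tfrac12)=2^{1-2B_{k}}\sqrt{\pi}\,\Gam(2B_{k})$ and $\Gam(A_{k})^{2}$ are genuinely different functions of $(s,\lam)$, not a rearrangement of one another.

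The paper's own proof reaches the display because it writes each complex factor in the ``real'' shape $\pi^{-w/2}\Gam(w/2)$ with $w=(1+\tfrac{l}{2})+2s+i\vpi_{k}(\lam)$ in the numerator and $w=-(2s+\tfrac{l}{2}+i\vpi_{k}(\lam))$ in the denominator, i.e.\ only $s$ is doubled relative to the real-case lemma; under that convention the real-case computation carries over verbatim, the half-shift between the two $\Gam$-arguments survives, and the stated product follows. That convention is not the one of Section \ref{sec:corrcplx} which you invoke, so, carried out as written, your derivation terminates at $\prod_{k}(2\pi)^{-(1+2s+l+i\vpi_{k}(\lam))}\tfrac{1}{\pi}\sin(\pi A_{k})\Gam(A_{k})^{2}$ rather than at the lemma's right-hand side. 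To make the argument close you must either adopt the normalization actually used in the paper's proof (in which case your write-up is literally the real-case proof with the appropriate substitution) or else conclude a different formula than the one stated; you cannot get the displayed expression from the $\Gamma_{\BC}$-normalization you start from.
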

\begin{proof}
Using the definition of $L$-function, we have
\begin{align*}
\frac{L(1+s+\frac{l}{2},\pi,\rho)}{L(-s-\frac{l}{2},\pi^{\vee},\rho)}=
\frac{\prod_{k=1}^{n}\pi^{-\frac{1+2s+\frac{l}{2}+i\vpi_{k}(\lam)}{2}} \Gam(\frac{1+2s+\frac{l}{2}+i\vpi_{k}(\lam)}{2} ) }
{\prod_{k=1}^{n}  \pi^{\frac{2s+\frac{l}{2}+i\vpi_{k}(\lam)}{2}} \Gam(  -\frac{2s+\frac{l}{2}+i\vpi_{k}(\lam)}{2})       }
\\=
\prod_{k=1}^{n} \pi^{-(\frac{1}{2}+2s+\frac{l}{2}+i\vpi_{k}(\lam)    ) }
\frac{ \Gam(\frac{1+2s+\frac{l}{2}+i\vpi_{k}(\lam)}{2} )}{\Gam(  -\frac{2s+\frac{l}{2}+i\vpi_{k}(\lam)}{2}) }.
\end{align*}
Using the functional equation for $\Gam(z)$
\begin{align*}
\Gam(z)\Gam(1-z) = \frac{\pi}{\sin(\pi z)}
\end{align*}
we get
\begin{align*}
\frac{1}{\Gam\big(-\frac{2s+\frac{l}{2}+i\vpi_{k}(\lam)}{2}\big)} = \frac{1}{\pi}\sin\Big(\pi(\frac{2+2s+\frac{l}{2}+i\vpi_{k}(\lam)}{2})\Big)
\Gam\Big(\frac{2+2s+\frac{l}{2}+i\vpi_{k}(\lam)}{2}\Big).
\end{align*}
It follows that
\begin{align*}
\prod_{k=1}^{n} \pi^{-(\frac{1}{2}+2s+\frac{l}{2}+i\vpi_{k}(\lam)    ) }
&\frac{ \Gam\big(\frac{1+2s+\frac{l}{2}+i\vpi_{k}(\lam)}{2} \big)}{\Gam\big(  -\frac{2s+\frac{l}{2}+i\vpi_{k}(\lam)}{2}\big) }
\\=
\prod_{k=1}^{n} [\pi^{-(\frac{1}{2}+2s+\frac{l}{2}+i\vpi_{k}(\lam)    ) }
 &\Gam\Big(\frac{1+2s+\frac{l}{2}+i\vpi_{k}(\lam)}{2} \Big)
 \\
 \frac{1}{\pi}\sin\Big(\pi(\frac{2+2s+\frac{l}{2}+i\vpi_{k}(\lam)}{2})\Big)
&\Gam\Big(\frac{2+2s+\frac{l}{2}+i\vpi_{k}(\lam)}{2}\Big)].
\end{align*}
\end{proof}

We write $\lam= x+iy$ with $x\in \Fa^{*}$ and $y\in C^{\veps\rho_{B}}$, and we notice that the function $ \frac{1}{\pi}\sin\Big(\pi(\frac{2+2s+\frac{l}{2}+i\vpi_{k}(\lam)}{2})\Big)$ is a Paley-Wiener function in $\lam$, hence lies in $\CS(\Fa^{*}_{\veps})$. The function $\pi^{-(\frac{1}{2}+2s+\frac{l}{2}+i\vpi_{k}(\lam)    ) }$ is bounded.
Then combining with Theorem \ref{asympforbasiccplx} and the fact that $\CS(\Fa^{*}_{\veps})$ is a Fr\'echet algebra,
we know that if $\Re(2s+1+\frac{l}{2})$ is bigger than $\textrm{max}\{\vpi_{k}(\mu) | \quad 1\leq k\leq n,  \mu \in C^{\veps \rho_{B}}\}$ and $\Re(2s+2+\frac{l}{2})$ is bigger than $\textrm{max}\{\vpi_{k}(\mu) | 1\leq k\leq n,  \mu \in C^{\veps \rho_{B}}\}$, the function $\CH(\Phi_{\psi,\rho,s}^{K})$ lies in $\CS(\Fa^{*}_{\veps})$. Using the fact that $\pi_{\lam}\cong \pi_{w\lam}$ for $w\in W$, we know that $\CH(\Phi_{\psi,\rho,s}^{K})$ lies in $\CS(\Fa^{*}_{\veps})^{W}$.

In other words, we have proved the following asymptotic for $\Phi_{\psi,\rho,s}^{K}$
\begin{thm}
If $\Re(s)$ satisfies the following inequality
\begin{align*}
\Re(s)>-\frac{1}{2}-\frac{l}{4}+\textrm{max}\{\frac{\vpi_{k}(\mu)}{2} | \quad 1\leq k\leq n,  \mu \in C^{\veps \rho_{B}}\},
\end{align*}
then the function $\Phi_{\psi,\rho,s}^{K}$ lies in $\CS^{p}(K\bs G/K)$.
\end{thm}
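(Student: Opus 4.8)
The plan is to reduce the statement to Theorem \ref{asympforbasiccplx} (the asymptotic of $1_{\rho,s}$ in the complex case), using the preceding lemma to decompose $\CH(\Phi_{\psi,\rho,s}^{K})$ and the Fr\'echet algebra structure of $\CS(\Fa^{*}_{\veps})^{W}$ from Theorem \ref{isothm}.

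First I would apply the preceding lemma to write $\CH(\Phi_{\psi,\rho,s}^{K})$ as a finite product over $1\leq k\leq n$ of three kinds of factors in $\lam$: the exponential-type factor $\pi^{-(\frac{1}{2}+2s+\frac{l}{2}+i\vpi_{k}(\lam))}$; the trigonometric factor $\frac{1}{\pi}\sin\big(\pi(\frac{2+2s+\frac{l}{2}+i\vpi_{k}(\lam)}{2})\big)$; and the two Gamma factors $\Gam\big(\frac{1+2s+\frac{l}{2}+i\vpi_{k}(\lam)}{2}\big)$ and $\Gam\big(\frac{2+2s+\frac{l}{2}+i\vpi_{k}(\lam)}{2}\big)$. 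Writing $\lam = x+iy$ with $x\in\Fa^{*}$ and $y\in C^{\veps\rho_{B}}$, the linearity of $\vpi_{k}$ and the compactness of $C^{\veps\rho_{B}}$ keep the real part of the exponent in a compact set, so the factor $\pi^{-(\cdots)}$ is bounded on $\Fa^{*}_{\veps}$ with all derivatives dominated by polynomials in $|\lam|+1$. The $\sin$-factor is entire in $\lam$ of exponential type, hence a Paley-Wiener function, and so lies in $\CS(\Fa^{*}_{\veps})$ because $\CP(\Fa^{*}_{\BC})\subset\CS(\Fa^{*}_{\veps})$.

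Next I would treat the Gamma factors. The product $\prod_{k=1}^{n}\Gam\big(\frac{2s'+i\vpi_{k}(\lam)}{2}\big)$ is, up to bounded $(2\pi)$-powers, exactly $L(s',\pi_{\lam},\rho)$, so Theorem \ref{asympforbasiccplx} puts it in $\CS(\Fa^{*}_{\veps})^{W}$ as soon as $\Re(s')>\max\{\frac{\vpi_{k}(\mu)}{2}\mid 1\leq k\leq n,\ \mu\in C^{\veps\rho_{B}}\}$. Applying this with $s'=\tfrac{1}{2}+s+\tfrac{l}{4}$ (coming from the first Gamma factor) yields precisely the hypothesis $\Re(s)>-\tfrac{1}{2}-\tfrac{l}{4}+\max\{\frac{\vpi_{k}(\mu)}{2}\}$; applying it with $s'=1+s+\tfrac{l}{4}$ (from the second Gamma factor) is the strictly weaker condition $\Re(s)>-1-\tfrac{l}{4}+\max\{\frac{\vpi_{k}(\mu)}{2}\}$, which is implied. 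The key bookkeeping point, which I regard as the only delicate step, is exactly this: one must check that the more restrictive Gamma factor is the one giving the stated bound and that this bound keeps $\lam\mapsto L(\,\cdot\,,\pi_{\lam},\rho)$ away from its poles on $\Fa^{*}_{\veps}$.

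Finally, since $\CS(\Fa^{*}_{\veps})^{W}$ is a Fr\'echet algebra under pointwise multiplication (Theorem \ref{isothm}) and multiplication by a bounded function with polynomially bounded derivatives preserves it, the full product lies in $\CS(\Fa^{*}_{\veps})^{W}$; the $W$-invariance is forced by $\pi_{w\lam}\cong\pi_{\lam}$. Then Theorem \ref{isothm}(1) identifies $\Phi_{\psi,\rho,s}^{K}=\CH^{-1}$ of this function as an element of $S^{p}(K\bs G/K)$, which is the claim.
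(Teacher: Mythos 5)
Your proof follows the paper's own argument essentially step for step: the same lemma-based factorization of $\CH(\Phi_{\psi,\rho,s}^{K})$ into the $\pi$-power, $\sin$, and Gamma factors, the same treatment of the $\sin$ factor as a Paley--Wiener element of $\CS(\Fa^{*}_{\veps})$ and of the $\pi$-power as bounded, the same reduction of the Gamma factors to Theorem \ref{asympforbasiccplx} with the first factor giving the binding condition $\Re(2s+1+\tfrac{l}{2})>\max_{k,\mu}\vpi_{k}(\mu)$ (equivalently the stated bound) and the second giving a weaker one, and the same Fr\'echet-algebra, $W$-invariance, and $\CH^{-1}$ steps via Theorem \ref{isothm}. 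There are no substantive differences from the paper's proof.
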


\bibliographystyle{amsalpha}
\bibliography{zhilin}

\end{document}